\documentclass[12pt]{amsart}
\usepackage{amscd}
\usepackage{amssymb}
\usepackage{a4wide}
\usepackage{amstext}
\usepackage{amsthm}
\usepackage{mathrsfs}
\usepackage{relsize}
\usepackage[final]{hyperref}
\hypersetup{unicode= false, colorlinks=true, linkcolor=blue,
anchorcolor=blue, citecolor=green, filecolor=red, menucolor=blue,
pagecolor=blue, urlcolor=blue} 
\numberwithin{equation}{section}

\newtheorem{Thm}{Theorem}[section]
\newtheorem{theorem}[Thm]{Theorem}
\newtheorem{lemma}[Thm]{Lemma}
\newtheorem{proposition}[Thm]{Proposition}
\newtheorem{corollary}[Thm]{Corollary}
\newtheorem{remark}[Thm]{Remark}

\newtheorem{question}[Thm]{Question}

\DeclareMathOperator{\Ai}{Ai}
\DeclareMathOperator{\Bi}{Bi}

\renewcommand{\Re}{\operatorname{Re}}
\renewcommand{\Im}{\operatorname{Im}}

\begin{document}

\title[Second-order differential equations and Cauchy kernels]
{Second-Order Differential Equations and Sums of Squares of Cauchy Kernels
with Finitely Many Zeros}

\author{Vladimir Shemyakov}
\address{St. Petersburg Department of V. A. Steklov Institute of Mathematics,
Russian Academy of Sciences, St. Petersburg, Russia}
\email{vladimir.v.shemyakov@gmail.com}

\subjclass[2020]{Primary 30D35; Secondary 30D10, 34M05, 34M40}
\keywords{Cauchy kernel, meromorphic function, differential equation,
Schwarzian derivative, Stokes rays}

\begin{abstract}
We study finite-order meromorphic functions representable as absolutely
convergent sums of squares of Cauchy kernels and having only finitely many
zeros. By earlier work of Baranov and the author, such functions admit a
representation $f=P/g^2$, where $P$ is a polynomial and $g$ is entire,
satisfying the differential equation
$
Pg''-P'g'+Qg=0,
$
where $Q$ is a polynomial. We show that the zeros of $g$ asymptotically
accumulate along the Stokes rays. If $\mathrm{deg}\ Q>\mathrm{deg}\ P$, they approach these
rays in the Euclidean metric, whereas in the borderline case
$\mathrm{deg}\ Q=\mathrm{deg}\ P$ one obtains in general only localization in logarithmic
neighborhoods of the Stokes rays, and this is sharp. We then characterize the
existence of a decomposition
$
P/g^2=\sum c_n (z-t_n)^{-2}
$
in terms of the sectorial behavior of $g$ and, equivalently, in terms of the
Laine condition for the corresponding Schwarzian equation. Finally, for fixed
$P$ and fixed order, we identify the resulting families, modulo the natural
equivalence relation, with finite-dimensional affine algebraic varieties.
\end{abstract}

\maketitle

\section{Introduction and main results}

Consider meromorphic functions of the form
\begin{equation}\label{eq:intro-cauchy}
f(z)=\sum_{n=1}^{\infty}\frac{c_n}{(z-t_n)^m},
\qquad m\in\mathbb N,
\end{equation}
where
\begin{equation}\label{eq:intro-cond}
|t_n|\to\infty \quad (n\to\infty),
\qquad
\sum_{n=1}^{\infty}\frac{|c_n|}{|t_n|^m}<\infty.
\end{equation}
Under \eqref{eq:intro-cond}, the series in \eqref{eq:intro-cauchy}
converges locally uniformly in
$\mathbb C\setminus\{t_n:n\in\mathbb N\}$, and hence defines a meromorphic
function. We are concerned with the zero distribution of such functions, with
special emphasis on the borderline case $m=2$.

The case $m=1$ is classical. Keldysh proved that, under the assumptions
\begin{equation}\label{eq:intro-keldysh-extra}
\sum_{n=1}^{\infty}|c_n|<\infty,
\qquad
\sum_{n=1}^{\infty}c_n\neq0,
\end{equation}
and provided that the sequence of poles has finite lower order, the function
\begin{equation}\label{eq:sum_cauchy_kernels}
f(z)=\sum_{n=1}^{\infty}\frac{c_n}{z-t_n}
\end{equation}
has infinitely many zeros; see \cite{Keldysh} and
\cite[Ch.~V, Thm.~6.2]{Book_Gold_Ostr}. Clunie, Eremenko, and Rossi
\cite{ClunieEremenkoRossi1993} conjectured that the same conclusion holds
when $c_n>0$ for all $n$ and
\begin{equation}\label{eqref:cond_c_n_t_n}
\sum_{n=1}^{\infty}\frac{|c_n|}{|t_n|}<\infty.
\end{equation}
For further results on sums of simple Cauchy kernels, see
\cite{ClunieEremenkoRossi1993,Eremenko_Langley_Rossi_Sum_Cauchy_Kernels}.

For $m\ge3$, Langley and Rossi proved that every finite-order function of the
form \eqref{eq:intro-cauchy} has infinitely many zeros and that the deficiency
of the value $0$ is strictly less than one, although it need not be zero;
see \cite[Thm.~1.9]{LangleRossi}. For $m=2$, the same authors proved a similar result, but under the additional
assumption \eqref{eqref:cond_c_n_t_n}. In \cite[Thm.~1.1]{baranov2023zeros}, Baranov and the
author established for $m=2$ an analog of Keldysh's theorem under
\eqref{eq:intro-cond} and \eqref{eq:intro-keldysh-extra}, with deficiency zero.

In the quadratic case $m=2$, there are numerous elementary examples. The
identity
\begin{equation}\label{eq:intro-sine-square-zero-free}
\frac{\pi^2}{\sin^2\pi z}
=
\sum_{n\in\mathbb Z}\frac{1}{(z-n)^2}
\end{equation}
gives a zero-free example. On the other hand, changing only the signs of the
Cauchy coefficients gives
\begin{equation}\label{eq:intro-sine-square-alternating}
\frac{\pi^2\cos\pi z}{\sin^2\pi z}
=
\sum_{n\in\mathbb Z}\frac{(-1)^n}{(z-n)^2},
\end{equation}
which has infinitely many zeros. Thus, the finite-zero condition in the
quadratic case is sensitive not only to the pole set, but also to the
asymptotic behaviour of the coefficients.

Starting point of present research is observation, made in \cite[Prop.~1.2]{baranov2023zeros}: if a function of the
form \eqref{eq:intro-cauchy} with $m=2$ has only finitely many zeros, then
it can be written as
\[
f=\frac{P}{g^2},
\]
where $P$ is a polynomial, $g$ is entire, and $g$ satisfies
\begin{equation}\label{eq:intro-main-ode}
Pg''-P'g'+Qg=0
\end{equation}
with an entire coefficient $Q$. In the finite-order case, $Q$ is in fact a
polynomial with $\deg Q\ge \deg P$; see Theorem~\ref{thm:finite-zero-reduction} below. Thus, the original
meromorphic problem leads naturally to the zero distribution of entire
solutions of second-order linear differential equations with rational
coefficients.

Write
\[
p=\deg P,\qquad q=\deg Q,\qquad N=q-p\ge0
\]
in the finite-order case. Classical asymptotic theory due to Hille
\cite{Hille_ODE_complex} shows that the zeros of entire solutions of
\eqref{eq:intro-main-ode} are organized by the corresponding Stokes geometry.
In the model case $P\equiv1$, Euclidean localization of zeros near the
Stokes rays is classical; see, for instance, Bank
\cite{BankZerosPolynomial}, Hellerstein--Rossi
\cite{HellersteinRossi1989}, and the recent expository paper of
Gundersen--Heittokangas--Zemirni
\cite{Gunder_Heittokangas_Asymp_Int_Theory}. Our first main result shows that
the same Euclidean localization remains valid for the general equation
\eqref{eq:intro-main-ode} whenever $N>0$. By contrast, in the borderline case
$N=0$, that is, $\deg Q=\deg P$, one can guarantee in general only
localization in logarithmic neighborhoods of the Stokes rays, and this
description is sharp; see Theorem~\ref{thm:zeros-stokes-localization}.

Our second main result returns from the differential equation to the original
representation problem. It shows that, for an entire solution $g$ of
\eqref{eq:intro-main-ode} with no common zeros with $P$, the
representability of $P/g^2$ in the form
\[
\frac{P(z)}{g^2(z)}
=
\sum_{n=1}^{\infty}\frac{c_n}{(z-t_n)^2}
\]
is equivalent to a sectorial growth condition on $g$, and also to the
occurrence of zeros near all Stokes rays; see
Theorems~\ref{thm:razloz_v_sum_1} and
\ref{thm:sectorial-behavior}. In the Schwarzian formulation, the same condition becomes equivalent to the
Laine condition for the rational Schwarzian term associated with
\eqref{eq:intro-main-ode}; see
Theorems~\ref{thm:laine-entire-solutions} and
\ref{thm:decomposition-from-laine}.

The preceding results give a more precise form of rigidity both of poles $t_n$ and coefficients $c_n$ in finite zero case. The poles of $P/g^2$, that is, the zeros of $g$, are
asymptotically distributed along the Stokes rays, and the coefficients
have a compatible asymptotic behavior. More precisely,
Theorem~\ref{thm:zeros-stokes-localization} and
Corollary~\ref{cor:coefficient-summability} give, on the $k$-th Stokes
sequence,
\[
t_{k,n}\sim B_k\,n^{\frac{2}{N+2}},
\qquad
c_{k,n}\sim A_k\,n^{-\frac{N}{N+2}},
\qquad
A_kB_k\neq0.
\]
In particular, along each fixed Stokes sequence the arguments of the
coefficients $c_{k,n}$ are asymptotically constant, and
\[
\sum_{k,n}\frac{|c_{k,n}|}{|t_{k,n}|^s}<\infty
\quad\Longleftrightarrow\quad
s>1.
\]
This explains the contrast between
\eqref{eq:intro-sine-square-zero-free} and
\eqref{eq:intro-sine-square-alternating}. In both examples $N=0$, so the two
Stokes directions are the positive and negative real axes. In
\eqref{eq:intro-sine-square-zero-free} the Cauchy coefficients are constant
on each ray, whereas in \eqref{eq:intro-sine-square-alternating} they alternate
in sign along each ray. Thus, the latter example violates the asymptotic
constancy of the coefficient arguments forced in the finite-zero case.

Our third main result is algebraic. Once one fixes the polynomial $P$ and
the order $\rho$, one may ask for the size of the family of meromorphic
functions of the form \eqref{eq:intro-cauchy} with $m=2$ and prescribed
finite zero divisor $P$. We show that the corresponding equivalence classes
are parametrized by a complex affine algebraic variety of dimension
$2\rho-1$; more precisely, this quotient is naturally identified with the
corresponding Laine variety; see Theorems~\ref{thm:laine-variety} and
\ref{thm:moduli-bijection}.

The paper is organized as follows. Section~2 contains the reduction to a second-order differential equation,
together with the growth facts for meromorphic functions of the form
\eqref{eq:intro-cauchy}. It also establishes the asymptotic distribution of
zeros of entire solutions of the resulting equation.
In Section~3 we reformulate the problem in terms of the Schwarzian derivative
and establish the criterion for representability by sums of squares of Cauchy
kernels. Section~4 contains the algebraic parametrization of the corresponding
classes. Section~5 contains explicit examples.

\textbf{Acknowledgement.} The author is grateful to Anton Baranov for valuable comments and insightful remarks.

\section{Differential-equation reduction}

In this section we derive the reduction from meromorphic functions represented
by sums of squares of Cauchy kernels to second-order linear differential
equations with rational coefficients. We use the standard notation of Nevanlinna theory, in particular
$T(r,f),N(r,f),m(r,f),n(r,f)$, the deficiencies
$\delta(a,f),\Delta(a,f)$, and the order, lower order, type, and lower type
$\rho(f),\lambda(f),\sigma(f),\tau(f)$; the same symbols are used for the
corresponding characteristics of a sequence $\{t_n\}$, see
\cite{Book_Gold_Ostr,hayman1964meromorphic}. 
\subsection{Growth of Cauchy kernels}

We begin with the classical case of simple Cauchy kernels.

\begin{lemma}[{\cite[Ch.~V, Thm.~6.1]{Book_Gold_Ostr}}]\label{lem:smirnov}
Let $\{c_n\}_{n\in\mathbb N},\{t_n\}_{n\in\mathbb N}\subset\mathbb C$
satisfy
\[
\sum_{n=1}^{\infty}\frac{|c_n|}{|t_n|}<\infty,
\qquad
f(z)=\sum_{n=1}^{\infty}\frac{c_n}{z-t_n}.
\]
Then for every $p\in(0,1)$,
\[
\int\limits_0^{2\pi}|f(re^{i\varphi})|^p\,d\varphi\longrightarrow0,
\qquad r\to\infty.
\]
In particular,
\[
m(r,f)=o(1),
\qquad
T(r,f)=N(r,f)+o(1),
\qquad r\to\infty,
\]
and the order and type of $f$ coincide with those of the pole sequence
$\{t_n\}$.
\end{lemma}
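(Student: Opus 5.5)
We will prove the $L^p$ estimate first and then read off the Nevanlinna consequences. Put $w_n=|c_n|/|t_n|$, so $\sum w_n<\infty$. Fix $p\in(0,1)$. Because $0<p<1$, the map $x\mapsto x^p$ is subadditive on $[0,\infty)$. Hence, pointwise,
\[
|f(re^{i\varphi})|^p\le \sum_{n}\frac{|c_n|^p}{|re^{i\varphi}-t_n|^p}.
\]
It is therefore enough to estimate $\int_0^{2\pi}|re^{i\varphi}-t|^{-p}\,d\varphi$ uniformly in $t$. Since $p<1$, the singularity is integrable, and a direct computation gives the standard bound
\[
\int_0^{2\pi}\frac{d\varphi}{|re^{i\varphi}-t|^{p}}\le \frac{C_p}{\max(r,|t|)^{p}},
\]
with $C_p$ independent of $r$ and $t$. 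To see this, treat $|t|\ge 2r$ and $|t|\le r/2$ trivially. In the annulus $r/2<|t|<2r$, use $|re^{i\varphi}-t|\ge r|\sin((\varphi-\arg t)/2)|$ up to a constant, together with the integrability of $|\theta|^{-p}$.

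Subadditivity alone does not give a summable bound, because the terms are $|c_n|^p$ rather than $|c_n|$. The way around this is to split the sum.
\begin{itemize}
\item For a large $M$, let $f_M$ be the sum of the first $M$ terms. Then $f_M\to0$ uniformly on $|z|=r$ as $r\to\infty$, so $\int|f_M|^p\to0$.
\item For the tail $h_M=f-f_M$, apply Hölder (Jensen) with exponent $1/p$:
\[
\frac1{2\pi}\int|h_M|^p\le\Big(\frac1{2\pi}\int|h_M|\Big)^p.
\]
\end{itemize}
We then need an $L^1$ bound for $|re^{i\varphi}-t|^{-1}$. This integral is only $\lesssim \max(r,|t|)^{-1}\log\bigl(2+\frac{\max(r,|t|)}{\,|r-|t||\,}\bigr)$, which has a logarithmic blow-up near the circle $|t|=r$. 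This is the main obstacle.

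To avoid the blow-up, use the $L^p$ quasi-norm directly with weights. Let $\mu$ be the finite measure with mass $w_n$ at $t_n$, and write
\[
h_M(z)=\sum_{n>M} w_n\,\frac{c_n/w_n}{z-t_n},\qquad |c_n/w_n|=|t_n|.
\]
Then $h_M$ is an average against $\mu$ of kernels $t/(z-t)$. The task is to show that $\int|h_M|^p\,d\varphi\le C_p\,\|\mu_M\|^p$. This is Kolmogorov's weak-type $(1,1)$ estimate for the Cauchy transform on the circle, or equivalently the boundedness in $L^p$, $p<1$, of Cauchy integrals of measures. The map $z\mapsto \int \frac{t\,d\mu(t)}{z-t}$ restricted to $|z|=r$ has weak-$L^1$ norm $\lesssim\|\mu\|$, uniformly in $r$, by rescaling $z\mapsto z/r$. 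The kernels $t/(z-t)$ restricted to the unit circle are Poisson/Cauchy-type kernels of measures projected to the circle, which is why the bound is uniform. Weak-$L^1$ on a finite measure space embeds in $L^p$ for $p<1$, giving
\[
\frac1{2\pi}\int_0^{2\pi}|h_M|^p\le C_p\Big(\sum_{n>M}w_n\Big)^p .
\]
Choosing $M$ large and then $r$ large yields $\int|f|^p\to0$.

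The consequences follow directly. Since $\log^+x\le p^{-1}x^p$, we have $m(r,f)\le (2\pi p)^{-1}\int|f|^p\to0$. Hence $T(r,f)=N(r,f)+o(1)$. Finally, $N(r,f)$ is the counting function of the poles $\{t_n\}$, taken with multiplicity one provided the $t_n$ are distinct and the $c_n$ are nonzero. So the order and type of $f$ equal those of $\{t_n\}$.
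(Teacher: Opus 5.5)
Your architecture is sound. The finite head $f_M$ is harmless, and the deduction of $m(r,f)=o(1)$ and $T(r,f)=N(r,f)+o(1)$ from the $L^p$ bound is correct. The uniform tail estimate
\[
\frac1{2\pi}\int_0^{2\pi}|h_M(re^{i\varphi})|^p\,d\varphi\le C_p\Bigl(\sum_{n>M}w_n\Bigr)^p
\]
is also true. But you do not prove it, and it is the whole content of the lemma.

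Kolmogorov's weak-type theorem concerns Cauchy integrals of measures \emph{supported on the circle}, taken as boundary values there. Your $\mu$ is a planar measure with atoms inside, outside and arbitrarily close to $|z|=r$. You also cannot simply sum the correct single-atom weak-type bounds, because $L^{1,\infty}$ is not normed; this is exactly the obstruction you identified one paragraph earlier. The sentence about kernels ``of measures projected to the circle'' is where the argument would have to happen, and none of it is carried out. You would need to:
\begin{enumerate}
\item sweep inner atoms by the interior Poisson kernel and outer atoms by the exterior one;
\item check that the weight $t/(z-t)$ makes the swept masses equal to $w_n$;
\item identify $h_M$ on $|z|=r$ with outer, resp.\ inner, boundary values of two Cauchy integrals.
\end{enumerate}

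The missing idea is simpler than weak-type theory: positivity of real parts. With $\zeta=e^{i\varphi}$,
\[
\frac{1}{r\zeta-t}=\frac{\bar\zeta}{r}\cdot\frac{1}{1-(t/r)\bar\zeta}\quad(|t|\le r),
\qquad
\frac{1}{r\zeta-t}=-\frac1t\cdot\frac{1}{1-(r/t)\zeta}\quad(|t|>r),
\]
and $\Re\frac1{1-u}>\frac12$ for $|u|<1$. Write each coefficient $c_n/r$ (for $|t_n|\le r$), resp.\ $-c_n/t_n$ (for $|t_n|>r$), as a combination of four nonnegative numbers with factors $\pm1,\pm i$. Then $f(r\zeta)$ becomes a sum of eight terms $\epsilon(\zeta)F(\bar\zeta)$ or $\epsilon F(\zeta)$, with $|\epsilon|=1$, $F$ analytic in the unit disc and $\Re F\ge0$.

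For such $F$, the inequality $\Re F^p\ge\cos(\pi p/2)|F|^p$ and the mean value property give
\[
\int_0^{2\pi}|F(\rho e^{i\varphi})|^p\,d\varphi\le \frac{2\pi}{\cos(\pi p/2)}\,|F(0)|^p,\qquad \rho<1,
\]
and Fatou's lemma lets $\rho\to1$ (Smirnov's lemma). By $p$-subadditivity,
\[
\int_0^{2\pi}|f(re^{i\varphi})|^p\,d\varphi\le C_p\Bigl(\frac1r\sum_{|t_n|\le r}|c_n|\Bigr)^p+C_p\Bigl(\sum_{|t_n|>r}\frac{|c_n|}{|t_n|}\Bigr)^p .
\]
The second term is a tail of a convergent series. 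The first tends to $0$ by dominated convergence, since $|c_n|/r=(|c_n|/|t_n|)(|t_n|/r)$ with $|t_n|/r\le1$ on the range of summation.

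This is the classical Smirnov-type argument for this lemma; the paper itself only cites it. It also makes your split in $M$ unnecessary, because splitting by $|t_n|\le r$ versus $|t_n|>r$ already produces the decay.
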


The next statement is the corresponding version for higher powers of the
Cauchy kernel. While our main application is the case $m=2$, we formulate
it for all $m\ge2$ in a way convenient for later use.

\begin{lemma}\label{lem:equality_orders}
Let $m\ge2$, let $\{c_n\}_{n\in\mathbb N},\{t_n\}_{n\in\mathbb N}\subset\mathbb C$, and
assume that
\[
\sum_{n=1}^{\infty}\frac{|c_n|}{|t_n|^m}<\infty,
\qquad
f(z)=\sum_{n=1}^{\infty}\frac{c_n}{(z-t_n)^m}.
\]
Then
\[
T(r,f)\sim N(r,f),
\qquad r\to\infty,
\]
outside a set of finite measure. In particular, the order, lower
order, type, and lower type of $f$ coincide with those of the sequence
$\{t_n\}$.
\end{lemma}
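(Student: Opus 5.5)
Since $T(r,f)=m(r,f)+N(r,f)$, the claim $T(r,f)\sim N(r,f)$ reduces to showing $m(r,f)=o(N(r,f))$ outside a set of finite measure. The natural route is to realize $f$ as a derivative of a sum of simple Cauchy kernels and then apply Lemma~\ref{lem:smirnov} together with the lemma on the logarithmic derivative.

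\textbf{Step 1: writing $f$ as a derivative.} Put
\[
F(z)=\frac{(-1)^{m-1}}{(m-1)!}\sum_{n=1}^{\infty}\frac{c_n}{t_n^{m-1}}\cdot\frac{t_n^{m-1}}{z-t_n}
\]
in a modified form. More precisely, we want an $F$ with $F^{(m-1)}=f$. The naive choice $\sum c_n/(z-t_n)$ need not converge, so we subtract Taylor polynomials at $0$: take
\[
F(z)=\frac{(-1)^{m-1}}{(m-1)!}\sum_n c_n\Bigl(\frac{1}{z-t_n}+\sum_{j=0}^{m-2}\frac{z^j}{t_n^{j+1}}\Bigr)
=\frac{(-1)^{m-1}}{(m-1)!}\sum_n \frac{c_n z^{m-1}}{t_n^{m-1}(z-t_n)}.
\]
We may assume $t_n\neq0$, since finitely many terms change nothing. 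Then $F=z^{m-1}h$ with
\[
h(z)=\mathrm{const}\cdot\sum_n \frac{c_nt_n^{1-m}}{z-t_n},\qquad \sum_n\frac{|c_nt_n^{1-m}|}{|t_n|}<\infty,
\]
so Lemma~\ref{lem:smirnov} applies to $h$. It gives $m(r,h)=o(1)$, and hence $m(r,F)\le (m-1)\log r+o(1)$. Differentiating $m-1$ times kills the polynomial corrections and yields $F^{(m-1)}=f$.

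\textbf{Step 2: estimating $m(r,f)$.} Write
\[
m(r,f)\le m(r,F)+m\bigl(r,F^{(m-1)}/F\bigr)=O(\log r)+O\bigl(\log T(r,F)+\log r\bigr)
\]
outside a set of finite measure, by the lemma on the logarithmic derivative. The poles of $F$ are the $t_n$, simple, while those of $f$ have multiplicity $m$. Thus $N(r,F)=\overline N(r,f)\le N(r,f)$, and so $T(r,F)\le N(r,f)+O(\log r)$. This gives $m(r,f)=O(\log N(r,f)+\log r)$ off an exceptional set. To conclude $m(r,f)=o(N(r,f))$ we need $\log r=o(N(r,f))$. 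This holds when $\{t_n\}$ is infinite, since then $n(r)\to\infty$ and $N(r,f)\ge m\int^r n(t)\,dt/t$ grows faster than any multiple of $\log r$. In the finite case $f$ is rational, and the result is trivial, or read with $m(r,f)\to0$ directly. Hence $T(r,f)\sim N(r,f)$ outside a set of finite measure.

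\textbf{Step 3: orders and types.} We have $N(r,f)=m\,N(r,\{t_n\})$ up to $O(\log r)$, where $N(r,\{t_n\})$ is the counting function of the sequence. The order, lower order, type and lower type of $f$ are computed from $T(r,f)$, and those of $\{t_n\}$ from $N(r,\{t_n\})$. The asymptotic equivalence holds only off an exceptional set $E$ of finite measure, and this is the main obstacle for the lower order and lower type. For $r\notin E$ the bound is immediate. For $r\in E$ we use the standard device: since $E$ has finite measure, for each $r$ there is $r'\in[r,r+1]\setminus E$ (for large $r$). Monotonicity of $T$ and $N$ then gives
\[
T(r)\le T(r')\le(1+o(1))N(r')\le (1+o(1))N(r+1)\le(1+o(1))N(r^{1+\epsilon}),
\]
and the reverse inequality $N\le T$ holds everywhere. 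The shift from $r$ to $r+1$ affects neither order nor type, because $N(r+1)\sim N(r)$ up to constant factors for finite-order functions, and indeed $N(r+1)\le N(2r)$ suffices for order. For the type we use $N(r+1)/N(r)\to1$, which follows from $N(r+1)-N(r)\le n(r+1)\log(1+1/r)=o(N(r))$. Hence all four characteristics coincide, with the factor $m$ irrelevant for order and normalized away in the paper's convention for the type of a sequence.
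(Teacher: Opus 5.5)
Your proof is essentially the paper's proof. You use the same auxiliary function $z^{m-1}\sum_n c_nt_n^{1-m}/(z-t_n)$, the same partial-fraction identity exhibiting $f$ as its $(m-1)$-st derivative, Lemma~\ref{lem:smirnov} to control its proximity function, and the lemma on the logarithmic derivative. You are more explicit than the paper on two points it passes over:
\begin{itemize}
\item that $\log r=o(N(r,f))$, which is needed to absorb $m(r,F)\le(m-1)\log r+o(1)$;
\item how to cross the exceptional set, which the paper delegates to the continuation lemma for inequalities.
\end{itemize}

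One auxiliary claim in Step~3 is false, though it is not needed. For finite order one does not have $N(r+1)\asymp N(r)$, and certainly not $N(r+1)/N(r)\to1$. For a counterexample, put $\lfloor r_k^2\rfloor$ points on $|z|=r_k+\tfrac12$, with $r_{k+1}=e^{r_k}$. This sequence has order $2$ and lower order $0$. Yet at $r=r_{k+1}$ one has $N(r)\asymp r_k^3$, while $N(r+1)-N(r)\ge(1+o(1))\,r_{k+1}/2$. The bound $n(r+1)\log(1+1/r)=o(N(r))$ fails here because the lower order can be much smaller than the order.

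You can drop the claim. You have $N(r)\le T(r)\le(1+o(1))N(r+1)$ for all large $r$, and also $\log(r+1)/\log r\to1$ and $(r+1)^\rho/r^\rho\to1$. From these, the $\limsup$ and $\liminf$ of $\log T(r)/\log r$ and of $T(r)/r^\rho$ coincide directly with those of $N$.

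Your remark about the factor $m$ is correct, but it is not a convention of the paper. One has $T(r,f)\sim N(r,f)=m\,N(r,\{t_n\})$, and the paper's own proof carries the same factor through $N(r,f)=mN(r,H)$. So the assertion about type and lower type holds only if $\{t_n\}$ is counted with multiplicity $m$.
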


\begin{proof}
Set
\[
g(z)=\sum_{n=1}^{\infty}\frac{c_n/t_n^{m-1}}{z-t_n},
\qquad
H(z)=z^{m-1}g(z).
\]
Since
\[
\sum_{n=1}^{\infty}\frac{|c_n/t_n^{m-1}|}{|t_n|}
=
\sum_{n=1}^{\infty}\frac{|c_n|}{|t_n|^m}<\infty,
\]
Lemma~\ref{lem:smirnov} applies to $g$. Hence $g$, and therefore also
$H$, has the same order and type as the sequence $\{t_n\}$, and
\[
m(r,H)=o\!\bigl(T(r,H)\bigr),
\qquad
T(r,H)\sim N(r,H),
\]
as $r\to\infty$. 
Using
\[
\frac{z^{m-1}}{t^{m-1}(z-t)}
=
\frac{1}{z-t}+\sum_{k=0}^{m-2}\frac{z^k}{t^{k+1}},
\]
we obtain
\[
H^{(m-1)}(z)=(-1)^{m-1}(m-1)!\,f(z).
\]
Moreover, $f$ and $H$ have the same poles, with multiplicities multiplied
by $m$, so $N(r,f)=m\,N(r,H)$.
By the lemma on the logarithmic derivative,
\[
m\!\left(r,\frac{H^{(m-1)}}{H}\right)=O(\log T(r,H))=o(T(r,H))
\]
outside a set of finite measure, see \cite[Ch.~3, Thm.~1.3]{Book_Gold_Ostr}. Therefore
\[
m(r,f)\le
m\!\left(r,\frac{H^{(m-1)}}{H}\right)+m(r,H)+O(1)
=
o\!\bigl(T(r,H)\bigr)
=
o\!\bigl(N(r,f)\bigr),
\]
and so
\[
T(r,f)=N(r,f)\bigl(1+o(1)\bigr)
\]
outside a set of finite measure.

The assertions concerning the order $\rho(f)$, lower order $\lambda(f)$,
type $\sigma(f)$, and lower type $\tau(f)$ are standard. Indeed,
asymptotic equivalence outside a set of finite measure preserves these
characteristics, by the continuation lemma for inequalities; see
\cite[Ch.~6, Lem.~2.1]{Book_Gold_Ostr}.
\end{proof}

\subsection{From $f=P/g^2$ to the associated differential equation}

Assume that a meromorphic function $f$ has only double poles and only
finitely many zeros. Then one may write
\[
f=\frac{P}{g^2},
\]
where $P$ is a polynomial, $g$ is entire, and the zeros of $g$ are
simple and coincide with the poles of $f$. For functions represented by
sums of squares of Cauchy kernels,
\[
f(z)=\sum_{k=1}^\infty \frac{c_k}{(z-t_k)^2},
\qquad
\sum_{k=1}^\infty \frac{|c_k|}{|t_k|^2}<\infty,
\]
this representation is natural, since the poles of $f$ are precisely the
zeros of $g$, and
\[
c_k=\frac{P(t_k)}{g'(t_k)^2}.
\]
As observed in \cite[Prop.~1.2]{baranov2023zeros}, the vanishing of the
residues forces $g$ to satisfy a second-order linear differential equation.
The next theorem adds further information about the solution $g$.

\begin{theorem}\label{thm:finite-zero-reduction}
Let $\{c_n\}_{n\in\mathbb N},\{t_n\}_{n\in\mathbb N}\subset\mathbb C$
satisfy
\begin{equation}\label{eq:cauchy-square-series}
f(z)=\sum_{n=1}^{\infty}\frac{c_n}{(z-t_n)^2},
\qquad
\sum_{n=1}^{\infty}\frac{|c_n|}{|t_n|^2}<\infty.
\end{equation}
Assume that $f$ has only finitely many zeros. Then there exist an entire
function $g$, a polynomial $P\not\equiv0$, and an entire function $Q$
such that
\[
f=\frac{P}{g^2},
\qquad
Pg''-P'g'+Qg=0.
\]
If, in addition, $f$ has finite order, then $Q$ is a polynomial,
\[
\deg Q\ge \deg P,
\qquad
\rho_0:=\frac{\deg Q-\deg P+2}{2}
\in\left\{1,\frac32,2,\frac52,\dots\right\}.
\]
Both $f$ and $g$ have order and lower order equal to $\rho_0$, and both
are of normal type:
\[
0<\tau(f)=\sigma(f)<\infty,
\qquad
0<\tau(g)=\sigma(g)<\infty.
\]
Finally, if
\[
P(z)=\prod_{j=1}^{n}(z-p_j)^{\alpha_j},
\]
then
\[
\frac{Q(z)}{P(z)}
=
S(z)+
\sum_{j=1}^{n}
\alpha_j\,\frac{g'(p_j)}{g(p_j)}\,\frac{1}{z-p_j},
\]
where $S$ is a polynomial.
\end{theorem}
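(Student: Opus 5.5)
The plan has four steps: obtain the equation from vanishing residues, bound the growth of $Q$ to make it a polynomial, read off $\rho_0$ from the asymptotic integration of the equation, and identify the polar part of $Q/P$.

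\textbf{Step 1: the equation.} Since every pole of $f$ is double with zero residue, and $f$ has finitely many zeros, we write $f=P/g^2$. Here $P$ is the polynomial carrying the zeros of $f$ (with multiplicity), and $g$ is an entire function with simple zeros exactly at the $t_n$. Such a $g$ exists by Weierstrass' theorem, after absorbing the zero-free remainder $e^{h}$ into $g^2$, which is possible because a square root of $e^h$ exists.

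The residue of $P/g^2$ at a simple zero $t$ of $g$ is
\[
\frac{P'(t)g'(t)-P(t)g''(t)}{g'(t)^3}.
\]
Vanishing of the residue therefore means that the entire function $P'g'-Pg''$ vanishes on the zero set of $g$. Hence $Q:=(P'g'-Pg'')/g$ is entire, which is the identity $Pg''-P'g'+Qg=0$. Equivalently, $Q/P=-(g''/g)+(P'/P)(g'/g)$.

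\textbf{Step 2: $Q$ is a polynomial.} Assume $f$ has finite order $\rho$. By Lemma~\ref{lem:equality_orders}, the sequence $\{t_n\}$ has order $\rho$. We choose $g$ as a canonical product of genus at most $\rho$ times $e^{h}$ with $h$ a polynomial; this is allowed because $f/P$ has finite order and the zero-free factor of $g^2$ is then $e^{\text{poly}}$. So $g$ has finite order.

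By the lemma on the logarithmic derivative, $m(r,g''/g)$ and $m(r,g'/g)$ are $O(\log r)$. Since $Q$ is entire, $T(r,Q)=m(r,Q)=O(\log r)$, and therefore $Q$ is a polynomial. Combined with the local-residue formula at the $p_j$ (Step 4), this also shows that $S$ is a polynomial.

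\textbf{Step 3: order, type and $\deg Q\ge\deg P$.} The equation reads $g''-(P'/P)g'+(Q/P)g=0$, with $Q/P\sim a z^{N}$ where $N=q-p$ as $z\to\infty$. Standard asymptotic integration (Hille's theory, or the Wiman--Valiron / Gundersen estimate $|g''/g|\sim|Q/P|$ on a set of $r$ of full logarithmic measure) gives, for every transcendental solution, order $(N+2)/2$ and normal type $\sim|a|^{1/2}\cdot 2/(N+2)$. Order and lower order agree, since $\log M(r,g)\sim\frac{2|a|^{1/2}}{N+2}r^{(N+2)/2}$ along rays.

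If $N<0$, the equation has a regular singular point at $\infty$, so all solutions have polynomial growth and are polynomials. But $g$ has infinitely many zeros, unless $f$ has finitely many poles; that case is excluded because $|t_n|\to\infty$ forces infinitely many terms, and a finite sum cannot have the form with only finitely many zeros unless degenerate. Hence $N\ge0$ and $\rho_0=(N+2)/2\ge1$. The type assertions for $f=P/g^2$ then follow from Lemma~\ref{lem:equality_orders}, since $N(r,f)=2N(r,1/g)$ and the zero counting function of $g$ is comparable to $T(r,g)$ for these solutions: zeros are asymptotically on the Stokes rays with density $\asymp r^{\rho_0}$.

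I expect this step to be the main obstacle. It requires quoting a precise asymptotic result guaranteeing $\tau(g)=\sigma(g)\in(0,\infty)$ and $n(r,1/g)\asymp r^{\rho_0}$ in the presence of the singular coefficient $P'/P$. I would handle this by a Liouville-type substitution $g=P^{1/2}w$ on a simply connected domain near $\infty$, reducing to $w''+(Q/P+O(z^{-2}))w=0$.

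\textbf{Step 4: partial fractions.} Near $p_j$ we have $P'/P=\alpha_j/(z-p_j)+\text{holomorphic}$, and $g(p_j)\ne0$ since $P$ and $g$ share no zeros (otherwise absorb the common factor). So
\[
\frac{Q}{P}=-\frac{g''}{g}+\frac{P'}{P}\frac{g'}{g}
\]
has at most a simple pole at $p_j$, with residue $\alpha_j g'(p_j)/g(p_j)$. Subtracting these principal parts from the rational function $Q/P$ leaves a rational function with no finite poles, that is, a polynomial $S$.
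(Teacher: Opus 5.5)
The gap is in Step~3: your argument does not exclude $N=-1$, so $\deg Q\ge\deg P$ and $\rho_0\ge1$ are not proved. Steps~1, 2 and~4 coincide with the paper's argument, and your growth formula in Step~3 is what the paper obtains from Wiman--Valiron (Corollary~\ref{cor:ode-growth-parameters}).

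You claim that for $N<0$ the point $\infty$ is a regular singular point, so that entire solutions are polynomials. By Fuchs' criterion, regularity at $\infty$ for $g''-(P'/P)g'+(Q/P)g=0$ requires $Q/P=O(z^{-2})$, that is, $N\le-2$. For $N=-1$ one has $Q/P\sim b/z$, an irregular singularity, and transcendental entire solutions of order $1/2$ do occur. This is case~(ii) of Theorem~\ref{thm:wv-growth-ode} with $\beta=-1$. Concretely, $zg''-g'+g=0$ (so $P=z$, $Q=1$) has the entire solution $\sum_{k\ge2}a_kz^k$ with $k(k-2)a_k=-a_{k-1}$; taking $a_2=1$ gives $|a_k|=2/(k!\,(k-2)!)$, which has order $1/2$. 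Your argument therefore yields only $N\ge-1$, i.e.\ $\rho_0\in\{\tfrac12,1,\tfrac32,\dots\}$.

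Nothing in your proposal rules out $N=-1$. The paper handles it with an input that uses the Cauchy-kernel structure of $f$, not just the differential equation; this is Proposition~\ref{prop:squares-order}. One writes $f=-F'$ with
$F(z)=\sum_n\bigl(\frac{c_n}{z-t_n}+\frac{c_n}{t_n}\bigr)=z\sum_n\frac{c_n/t_n}{z-t_n}$.
By Lemma~\ref{lem:smirnov}, $F$ has the order of $\{t_n\}$, hence of $f$. The Eremenko--Langley--Rossi theorem on zeros of derivatives then shows that if this order were $<1$, $f$ would have infinitely many zeros. So $\rho(f)\ge1$, and combined with $\rho(f)=(N+2)/2$ this excludes $N=-1$.

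A smaller point concerns the type claim you flagged as the main obstacle. Zero densities $\asymp r^{\rho_0}$ give only $0<\tau(f)\le\sigma(f)<\infty$. For equality you need a genuine asymptotic $n(r,0,g)\sim c\,r^{\rho_0}$, available from the precise localization $t_n-d\sim B\,n^{2/(N+2)}$. Alternatively, use $T(r,f)=2T(r,g)+O(\log r)$ together with an asymptotic formula for $T(r,g)$.
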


The finite-order part of the proof relies on three auxiliary facts. The first
one excludes order below one for sums of squares of Cauchy kernels with only
finitely many zeros.

\begin{proposition}\label{prop:squares-order}
Let $f$ be as in Theorem~\ref{thm:finite-zero-reduction}. If $f$ has
finite order $\rho(f)$ and only finitely many zeros, then
\[
\rho(f)\ge 1.
\]
\end{proposition}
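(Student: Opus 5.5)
We argue by contradiction: suppose $\rho(f)<1$. We may assume the pole set $\{t_n\}$ is infinite. Otherwise $f$ is rational and there is nothing to prove beyond the degenerate case, which we regard as excluded by the standing assumptions. Write $f=P/g^2$ as in the discussion preceding Theorem~\ref{thm:finite-zero-reduction}, with $g$ entire and transcendental whose simple zeros are the $t_n$. By Lemma~\ref{lem:equality_orders}, the order of $g$ equals the order of the sequence $\{t_n\}$, which is $\rho(f)<1$. By \cite[Prop.~1.2]{baranov2023zeros}, $g$ satisfies $Pg''-P'g'+Qg=0$ with
\[
Q=\frac{P'g'-Pg''}{g}
\]
entire. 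The plan is to show first that $Q$ is a polynomial, then pin down the growth of $g$ by Wiman--Valiron theory, and finally rule out the only surviving case by comparing the asymptotics of $P/g^2$ with those of the Cauchy series.

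\emph{Step 1: $Q$ is a polynomial.} Since $Q/P=(P'/P)(g'/g)-g''/g$ and $g$ has finite order, the lemma on the logarithmic derivative gives $m(r,Q)=O(\log r)$ outside an exceptional set, and this holds for all $r$ in the finite-order case. Since $Q$ is entire, $T(r,Q)=m(r,Q)=O(\log r)$, so $Q$ is a polynomial; let $q=\deg Q$ and $p=\deg P$.

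\emph{Step 2: Wiman--Valiron.} Let $\nu(r)$ be the central index of $g$. Since $g$ is transcendental, $\nu(r)\to\infty$. At points $z$ with $|z|=r$ where $|g(z)|=M(r,g)$, outside a set of finite logarithmic measure, we have $g^{(k)}(z)/g(z)=(\nu(r)/z)^k(1+o(1))$. Dividing the equation by $Pg$ gives
\[
\frac{\nu^2}{z^2}(1+o(1))-\frac{p+o(1)}{z}\cdot\frac{\nu}{z}(1+o(1))+\frac{Q(z)}{P(z)}=0 .
\]
Because $\nu\to\infty$, the first term dominates the second, so $\nu(r)^2\asymp r^{q-p+2}$, and $q-p+2>0$. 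Hence $\rho(g)=(q-p+2)/2$. This quantity is $\ge1$ unless $q=p-1$, in which case $\rho(g)=1/2$. So the whole proof reduces to excluding $q=p-1$.

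\emph{Step 3: excluding $q=p-1$ (main obstacle).} Here $Q/P= a z^{-1}+O(z^{-2})$ with $a\ne0$, so infinity is an irregular singular point of rank $1/2$. The Hille/WKB asymptotics give two formal solutions $z^{\gamma}\exp(\pm2\sqrt{a z})(1+o(1))$ in the plane slit along a single Stokes ray $L$. A solution with infinitely many zeros, such as $g$, must therefore be a genuine combination of both. Consequently its zeros satisfy $t_n\sim Bn^2$ on $L$, and $|g|$ grows like $\exp\bigl(c|z|^{1/2}\bigr)$, with some $c>0$, in every closed sector avoiding $L$. In particular $|f|=|P|/|g|^2$ decays faster than any power along the ray $-L$. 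On the other hand, the asymptotics of $g'(t_n)$ on $L$ give $c_n=P(t_n)/g'(t_n)^2\sim A n^{\beta}$ with $A\ne0$, so the arguments of the $c_n$ are asymptotically constant. For $x\in -L$ the factors $(x-t_n)^{-2}$ all have essentially the same argument, so
\[
\Bigl|\sum_{n\ge n_0}\frac{c_n}{(x-t_n)^2}\Bigr|\ \ge\ c\sum_{n\ge n_0}\frac{|c_n|}{(|x|+|t_n|)^2}
\]
for a suitable $c>0$. The finitely many remaining terms are $(\sum_{n<n_0}c_n)x^{-2}+O(|x|^{-3})$. Choosing $n_0$ so that the tail's contribution of order $x^{-2}$, or larger, cannot be cancelled, we get a lower bound $|f(x)|\gtrsim|x|^{-K}$ along $-L$. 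This contradicts the superpolynomial decay of $f$. The delicate points are the rigorous derivation of $c_n\sim An^\beta$ from the Stokes asymptotics, which requires uniformity near the zeros on $L$, and the bookkeeping that prevents cancellation between the head and the tail of the series. If that cancellation argument resists, a fallback is to use the superpolynomial decay along $-L$ together with Phragm\'en--Lindel\"of on the slit plane to force $f\equiv0$ on a sector, which is absurd.
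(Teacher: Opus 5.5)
Your argument is correct in outline, but it takes a genuinely different route from the paper.

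The paper never uses the differential equation here. It notes that, up to sign, $f$ is the derivative of the meromorphic function $F(z)=\sum_n\bigl(\frac{c_n}{z-t_n}+\frac{c_n}{t_n}\bigr)=z\,h(z)$. By Lemma~\ref{lem:smirnov}, $F$ has the order of $\{t_n\}$, i.e.\ the order of $f$. If $\rho(f)<1$, then $F$ would be a transcendental meromorphic function of order less than $1$ whose derivative has finitely many zeros, and the Eremenko--Langley--Rossi theorem forbids this.

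You instead work through the equation $Pg''-P'g'+Qg=0$. Your Steps 1--2 are essentially Theorem~\ref{thm:wv-growth-ode} and Corollary~\ref{cor:ode-growth-parameters}, with transcendence of $g$ replacing the hypothesis $\rho(g)\ge1$. This quantizes the order to $\{\frac12,1,\frac32,\dots\}$. You then exclude the single value $\frac12$ using the one-Stokes-ray geometry: $P/g^2$ decays like $e^{-c|x|^{1/2}}$ along $-L$, while all tail terms of the Cauchy series point in nearly the same direction there.

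The two approaches trade off as follows:
\begin{itemize}
\item The paper's proof is a few lines and handles every order below $1$ at once, but it rests on a deep external theorem.
\item Yours is longer but self-contained within the asymptotic theory of Section~2. It also shows why order $\frac12$ is impossible: with two opposite Stokes rays, as for $\pi^2/\sin^2\pi z$, the alignment fails.
\end{itemize}

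The points you flag as delicate do go through.
\begin{itemize}
\item \emph{Extending Section~2 to this case.} Section~2 treats only $N\ge0$, so pull back by $z=cw^2$. Then $G(w)=g(cw^2)$ solves $G''-\frac{\hat P'}{\hat P}G'+\frac{\hat Q}{\hat P}G=0$ with $\hat P(w)=wP(cw^2)$, $\hat Q(w)=4c^2w^3Q(cw^2)$ and $\deg\hat Q=\deg\hat P$. Hence Theorems~\ref{thm:zeros-stokes-localization} and~\ref{thm:sectorial-behavior} apply with $N=0$. Evenness of $G$ forces the pattern $(\infty,\infty)$ in the two half-planes. This yields both the exponential growth off $L$ and the zero and derivative asymptotics on $L$; the factor $(-1)^n$ disappears in $g'(t_n)^2$.
\item \emph{No cancellation between head and tail.} With $q=p-1$ one gets $|g'(t_n)|\asymp|t_n|^{(2p-1)/4}$, hence $|c_n|\asymp|t_n|^{1/2}\asymp n$, so your $\beta$ equals $1$. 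Then $\sum_{n\ge n_0}|c_n|/(|x|+|t_n|)^2\asymp|x|^{-1}$, which dominates the $O(|x|^{-2})$ head, and in fact $|f(x)|\gtrsim|x|^{-1}$ along $-L$.
\item \emph{The fallback fails.} Your Phragm\'en--Lindel\"of fallback would not work on its own. Superpolynomial decay in all closed sectors avoiding one ray is compatible with nontrivial functions of order $\frac12$, e.g.\ $1/\cosh^2\sqrt z$. The lower bound coming from the series structure is the essential ingredient.
\end{itemize}
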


\begin{proof}
By Lemma~\ref{lem:equality_orders}, the pole sequence $\{t_n\}$ has the
same finite order as $f$. Define
\[
F(z)=\sum_{n=1}^{\infty}\left(\frac{c_n}{z-t_n}+\frac{c_n}{t_n}\right).
\]
Then
\[
F(z)=z\,h(z),
\qquad
h(z)=\sum_{n=1}^{\infty}\frac{c_n/t_n}{z-t_n},
\]
and Lemma~\ref{lem:smirnov} implies that $\rho(h)=\rho(F)=\rho(t_n)$. Since $F'=f$, the case
$\rho(f)<1$ is excluded by a deep theorem of Eremenko, Langley, and Rossi
on zeros of derivatives of meromorphic functions, which implies that $f$
has infinitely many zeros; see \cite[Thm.~4]{Eremenko_zero_of_derievitive}
and \cite[Thm.~1.5]{Eremenko_Langley_Rossi_Sum_Cauchy_Kernels}.
\end{proof}

The following theorem is the only point in the paper where the classical
Wiman--Valiron theory is used. It describes the growth of transcendental
entire solutions of second-order differential equations with rational
coefficients in terms of the leading asymptotics of the coefficients.

\begin{theorem}\label{thm:wv-growth-ode}
Let $g$ be a transcendental entire solution of
\begin{equation*}\label{eq:equation_entire_classif}
g''(z)+A(z)g'(z)+B(z)g(z)=0,
\end{equation*}
where $A$ and $B$ are rational functions such that
\[
A(z)\sim a z^{\alpha},
\qquad
B(z)\sim b z^{\beta},
\qquad z\to\infty,
\]
for some $\alpha,\beta\in\mathbb Z$ and $a,b\in\mathbb C\setminus\{0\}$.
Then there exists a set $E\subset(0,\infty)$ of finite logarithmic measure
such that the following asymptotics hold as $r\to\infty$, $r\notin E$.

\smallskip
\noindent
\textup{(i)} If $\beta<2\alpha$, then $\alpha>-1$ and either
\[
\log M(r)\sim \frac{|a|}{\alpha+1}\,r^{\alpha+1},
\]
or
\[
\log M(r)=o\!\left(r^{\alpha+1}\right).
\]

\smallskip
\noindent
\textup{(ii)} If $\beta>2\alpha$, then $\beta>-2$ and
\[
\log M(r)\sim \frac{2\sqrt{|b|}}{\beta+2}\,r^{(\beta+2)/2}.
\]

\smallskip
\noindent
\textup{(iii)} If $\beta=2\alpha$, then $\alpha>-1$ and, for some choice of
the sign $\pm$,
\[
\log M(r)\sim
\frac{\bigl| -a\pm\sqrt{a^2-4b}\bigr|}{2(\alpha+1)}\,r^{\alpha+1}.
\]
\end{theorem}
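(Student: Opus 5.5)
The plan is to apply the Wiman--Valiron method directly to the equation. Let $\nu(r)$ be the central index of $g$. Outside an exceptional set $E$ of finite logarithmic measure, and at a point $z_r$ with $|z_r|=r$ and $|g(z_r)|=M(r)$, we have
\[
\frac{g^{(k)}(z_r)}{g(z_r)}=\Bigl(\frac{\nu(r)}{z_r}\Bigr)^k\bigl(1+o(1)\bigr),\qquad k=1,2.
\]
Since $g$ is transcendental, $\nu(r)/\log r\to\infty$. Divide the equation by $g$ and evaluate at $z_r$. Writing $w=\nu(r)/z_r$, this gives
\[
w^2\bigl(1+o(1)\bigr)+a z_r^{\alpha}\bigl(1+o(1)\bigr)\,w\bigl(1+o(1)\bigr)+b z_r^{\beta}\bigl(1+o(1)\bigr)=0 .
\]
The three terms have moduli comparable to $(\nu/r)^2$, $r^\alpha\nu/r$ and $r^\beta$. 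A term that strictly dominates the other two cannot occur, so at least two of them must be of the same size.

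In case (ii), $\beta>2\alpha$. If the first two terms balanced, we would get $\nu\asymp r^{\alpha+1}$, and then the third term $r^\beta$ would dominate. If the second and third balanced, we would get $\nu\asymp r^{\beta-\alpha+1}$, and then $(\nu/r)^2=r^{2\beta-2\alpha}$ would dominate $r^\beta$. Hence the first and third terms balance: $w^2\sim -b z_r^\beta$, so $\nu(r)\sim\sqrt{|b|}\,r^{(\beta+2)/2}$. Because $\nu\to\infty$ faster than $\log r$, this forces $\beta>-2$.

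In case (iii), $\beta=2\alpha$. Setting $w=u z_r^{\alpha}$, the equation becomes $u^2+au+b=o(\text{scale})$. The root $u$ must tend to one of the two roots $(-a\pm\sqrt{a^2-4b})/2$. Since the roots form a discrete set and $\nu$ is positive and monotone, the choice of root can be fixed along $r\notin E$ by a continuity/connectedness argument. A quicker route is to note that $|u|$ takes only finitely many limit values, so $\nu(r)/r^{\alpha+1}$ tends to a single value by monotonicity, up to an argument that it cannot oscillate. This yields $\nu(r)\sim c\,r^{\alpha+1}$ with $c=|{-a}\pm\sqrt{a^2-4b}|/2$. Here $c\neq0$ because $b\ne0$, and $\alpha>-1$ follows as before.

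In case (i), $\beta<2\alpha$. The possible balances are first with second, giving $\nu\sim|a|r^{\alpha+1}$, or second with third, giving $\nu\asymp r^{\beta-\alpha+1}=o(r^{\alpha+1})$. The first and third terms cannot balance alone, since the middle term would then dominate. Hence either $\nu\sim|a|r^{\alpha+1}$, or $\nu=o(r^{\alpha+1})$. In both alternatives $\alpha>-1$ is forced: otherwise $\nu$ would be bounded by $O(r^{\beta-\alpha+1})$ with $\beta-\alpha+1<\alpha+1\le0$, which is impossible since $\nu\to\infty$.

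Finally, the statements about $\nu$ are converted into statements about $\log M(r)$ using $\log M(r)=\int^r\nu(t)\,dt/t+O(\log\nu)$. This gives $\log M(r)\sim\nu(r)/(\text{exponent})$: dividing by $\alpha+1$ in cases (i) and (iii), and by $(\beta+2)/2$ in case (ii), which produces the stated constants. One must check that the asymptotics of $\nu$ hold for all large $r$, not only for $r\notin E$. Since $\nu$ is monotone and $E$ has finite logarithmic measure, every large $r$ is within a factor $1+o(1)$ of a point outside $E$, so the asymptotics extend to all large $r$ and integrate correctly. In case (i), the dichotomy must also be shown to hold globally, not to switch between the two alternatives along different sequences. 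Monotonicity of $\nu$ settles this: the two alternatives have different sizes, and $\nu$ cannot jump downward from $\approx|a|r^{\alpha+1}$ to $o(r^{\alpha+1})$ without violating the gap structure between nearby admissible radii.

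The main obstacle is exactly these uniformity issues: showing that a single sign in (iii) and a single alternative in (i) persist for all large $r$. My first attempt would combine monotonicity of $\nu$ with the density of non-exceptional radii.
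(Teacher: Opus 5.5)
Your proposal is correct and follows essentially the paper's route: Wiman--Valiron at a maximum-modulus point, analysis of $\nu^2(1+o(1))+az_r^{\alpha+1}\nu(1+o(1))+bz_r^{\beta+2}(1+o(1))=0$ (your dominant-balance case split replaces the paper's explicit quadratic formula), and integration of $\nu(t)/t$ to obtain $\log M(r)$. The switching issue you flag, which the paper passes over, is settled by monotonicity as you suggest. Since $\nu$ is nondecreasing, $\nu(r)/r^{\alpha+1}$ can move from the larger admissible value $c_2$ to the smaller $c_1$ only continuously, i.e.\ across an $r$-interval contained in $E$ of logarithmic length at least roughly $\frac{1}{\alpha+1}\log(c_2/c_1)$, so only finitely many switches occur. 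In case (i), $c_1$ is replaced by an arbitrarily small $\varepsilon$. One small slip: $\nu(r)/\log r\to\infty$ is false in general (take $\sum_n q^{n^2}z^n$ with $0<q<1$), but you only ever use $\nu(r)\to\infty$, so nothing is affected.
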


\begin{proof}
Let $\nu(r)=\nu(r,g)$ denote the central index of $g$. By the classical
Wiman--Valiron theorem, see \cite[Ch.~3, Thm.~3.2]{LaineComplexDiffeq}, there exists a set $E\subset(0,\infty)$ of finite
logarithmic measure such that for every $r\notin E$, if $z_r$ satisfies
\[
|z_r|=r,
\qquad
|g(z_r)|=M(r),
\]
then
\[
\frac{g^{(l)}(z_r)}{g(z_r)}
=
\left(\frac{\nu(r)}{z_r}\right)^l(1+o(1)),
\qquad l=1,2.
\]
Substituting these asymptotics into
\eqref{eq:equation_entire_classif} and using
\[
A(z_r)=a z_r^{\alpha}(1+o(1)),
\qquad
B(z_r)=b z_r^{\beta}(1+o(1)),
\]
we obtain
\[
\nu(r)^2(1+o(1))
+
a z_r^{\alpha+1}\nu(r)(1+o(1))
+
b z_r^{\beta+2}(1+o(1))
=0.
\]
Hence
\[
\nu(r)
=
-\frac{a}{2}z_r^{\alpha+1}
\pm
\frac12\sqrt{a^2 z_r^{2\alpha+2}-4b z_r^{\beta+2}}\,(1+o(1)).
\]

\begin{enumerate}
\item Assume first that $\beta<2\alpha$. Then
\[
\sqrt{a^2 z_r^{2\alpha+2}-4b z_r^{\beta+2}}
=
a z_r^{\alpha+1}(1+o(1))
\]
for a branch of the square root. It follows that either
\[
\nu(r)=o(r^{\alpha+1})
\]
or
\[
\nu(r)=|a|\,r^{\alpha+1}(1+o(1)).
\]
Since $g$ is transcendental entire, $\nu(r)\to\infty$, and hence
$\alpha>-1$.

\item Next assume that $\beta>2\alpha$. Then
\[
\sqrt{a^2 z_r^{2\alpha+2}-4b z_r^{\beta+2}}
=
2\sqrt{-b}\,z_r^{(\beta+2)/2}(1+o(1))
\]
for a branch of square root. Consequently,
\[
\nu(r)=\sqrt{|b|}\,r^{(\beta+2)/2}(1+o(1)).
\]
Again $\nu(r)\to\infty$ implies $\beta>-2$.

\item Finally, let $\beta=2\alpha$. Then $a^2 z_r^{2\alpha+2}-4b z_r^{\beta+2}
=
(a^2-4b)z_r^{2\alpha+2}$,
and hence, for some choice of sign $\pm$,
\[
\nu(r)
=
\frac{-a\pm\sqrt{a^2-4b}}{2}\,
z_r^{\alpha+1}(1+o(1)).
\]
Therefore,
\[
\nu(r)\sim
\frac{\bigl|-a\pm\sqrt{a^2-4b}\bigr|}{2}\,r^{\alpha+1}.
\]
Since $\nu(r)\to\infty$, we again have $\alpha>-1$.
\end{enumerate}

To pass from $\nu(r)$ to $M(r)$, we use the standard relation between
the central index and the maximum modulus. It follows from the classical
argument in \cite[Ch.~6, \S9, Probl.~68]{PolyaSzegoII}, together with the
continuation lemma for inequalities \cite[Ch.~6, Lem.~2.1]{Book_Gold_Ostr}.
Namely, for $\sigma>0$ and $C>0$,
\[
\nu(r)\sim C r^\sigma
\quad\Longleftrightarrow\quad
\log M(r)\sim \frac{C}{\sigma}\,r^\sigma
\]
outside a set of finite logarithmic measure. Similarly,
\[
\nu(r)=o(r^\sigma)
\quad\Longleftrightarrow\quad
\log M(r)=o(r^\sigma).
\]
Applying this with $\sigma=\alpha+1$ in cases \textup{(i)} and
\textup{(iii)}, and with $\sigma=(\beta+2)/2$ in case \textup{(ii)}, we
obtain the asserted asymptotics for $\log M(r)$.
\end{proof}

As an immediate consequence, one obtains the growth parameters of entire
solutions of the equation naturally associated with the representation
$f=P/g^2$.

\begin{corollary}\label{cor:ode-growth-parameters}
Let $g$ be an entire solution of order at least $1$ of
\[
g''(z)+\frac{P'(z)}{P(z)}\,g'(z)+\frac{Q(z)}{P(z)}\,g(z)=0,
\]
where
\[
P(z)=z^p+\dots,\qquad
Q(z)=Cz^q+\dots,\qquad C\neq0
\]
are polynomials. Then
\[
q\ge p-1,
\]
and
\[
\rho(g)=\lambda(g)=\frac{q-p+2}{2},
\qquad
\sigma(g)=\tau(g)=\frac{2\sqrt{|C|}}{q-p+2}.
\]
\end{corollary}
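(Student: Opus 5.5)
The plan is to apply Theorem~\ref{thm:wv-growth-ode} with $A=P'/P$ and $B=Q/P$. Since $P$ is monic of degree $p$, we have $A(z)\sim p z^{-1}$ when $p\ge1$, so $a=p$ and $\alpha=-1$. We also have $B(z)\sim Cz^{q-p}$, so $b=C$ and $\beta=q-p$. The degenerate case $p=0$ (so $A\equiv0$) has to be treated separately, but there the Wiman--Valiron computation in the proof of Theorem~\ref{thm:wv-growth-ode} simply loses its $A$-term. It gives $\nu(r)^2(1+o(1))+Cz_r^{q+2}(1+o(1))=0$, hence $\nu(r)\sim\sqrt{|C|}\,r^{(q+2)/2}$, which is the same conclusion as case (ii). Also, $g$ is transcendental, since its order is at least $1$.

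Assume $p\ge1$. Cases (i) and (iii) of Theorem~\ref{thm:wv-growth-ode} both force $\alpha>-1$, which contradicts $\alpha=-1$. Hence we must be in case (ii), and $\beta>2\alpha=-2$ means $q-p>-2$, i.e.\ $q\ge p-1$. (For $p=0$, the inequality $\nu(r)\to\infty$ gives $q+2>0$, which holds automatically.) Case (ii) then gives, outside a set $E$ of finite logarithmic measure,
\[
\log M(r,g)\sim \frac{2\sqrt{|C|}}{q-p+2}\,r^{(q-p+2)/2}.
\]

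It remains to remove the exceptional set. Put $\rho_0=(q-p+2)/2>0$. Since $E$ has finite logarithmic measure and $\log M(r)$ is increasing, the standard argument applies. For large $r$, the interval $[r,r(1+\varepsilon)]$ is not contained in $E$, because otherwise $E$ would have infinite logarithmic measure; otherwise the interval is handled through the continuation lemma \cite[Ch.~6, Lem.~2.1]{Book_Gold_Ostr}. So for every large $r$ there is some $r'\in[r,(1+\varepsilon)r]\setminus E$, and monotonicity gives
\[
\log M(r)\le \log M(r')\le (1+o(1))\sigma_0(1+\varepsilon)^{\rho_0}r^{\rho_0},
\]
with $\sigma_0=2\sqrt{|C|}/(q-p+2)$. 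Running the same argument on $[r(1+\varepsilon)^{-1},r]$ gives the matching lower bound. Letting $\varepsilon\to0$ yields $\log M(r)\sim\sigma_0 r^{\rho_0}$ for all $r\to\infty$. From this it follows that $\rho(g)=\lambda(g)=\rho_0$ and $\sigma(g)=\tau(g)=\sigma_0$.

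The main obstacle is not analytic but bookkeeping. One must correctly match the leading asymptotics $A\sim pz^{-1}$ to the hypotheses $a\ne0$, $\alpha\in\mathbb Z$ of Theorem~\ref{thm:wv-growth-ode}, and must handle the case $P$ constant, where that theorem does not literally apply. For that case I would re-run its Wiman--Valiron computation with $A\equiv0$, which is immediate. A further point to justify is that the exceptional set can be removed by monotonicity as above, rather than only having the asymptotics off $E$.
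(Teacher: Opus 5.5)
Your proposal is correct and follows the paper's proof: apply Theorem~\ref{thm:wv-growth-ode} with $\alpha=-1$, $\beta=q-p$, exclude alternatives (i) and (iii) because they force $\alpha>-1$, and read off $q\ge p-1$ together with the order and type from alternative (ii). Two of your steps refine points the paper passes over silently, and both are sound: the separate treatment of $p=0$ (where $A\equiv0$ violates the hypothesis $a\neq0$ of that theorem), and the explicit monotonicity argument that removes the exceptional set of finite logarithmic measure.
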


\begin{proof}
Since $g$ has order at least $1$, it is transcendental. In the notation of
Theorem~\ref{thm:wv-growth-ode},
\[
A(z)=\frac{P'(z)}{P(z)}\sim \frac{p}{z},
\qquad
B(z)=\frac{Q(z)}{P(z)}\sim C z^{q-p},
\qquad z\to\infty.
\]
Thus,
\[
\alpha=-1,
\qquad
\beta=q-p.
\]
The alternatives \textup{(i)} and \textup{(iii)} in
Theorem~\ref{thm:wv-growth-ode} are impossible, since they require
$\alpha>-1$. Hence only alternative \textup{(ii)} can occur. It follows that $q-p>-2$,
and therefore, since $p$ and $q$ are integers, $q\ge p-1$.
The stated values of the order, lower order, type, and lower type follow
directly from the asymptotic formula in alternative \textup{(ii)}.
\end{proof}

\begin{proof}[Proof of Theorem~\ref{thm:finite-zero-reduction}]
Since $f$ has only finitely many zeros and only double poles, it can be
written in the form
\[
f=\frac{P}{g^2},
\]
where $P$ is a polynomial and $g$ is an entire function whose zeros are
simple and coincide with the poles $t_n$. Comparing the coefficient of
$(z-t_n)^{-1}$ in the Laurent expansion of $P/g^2$ at $t_n$, we obtain
\[
P'(t_n)g'(t_n)-P(t_n)g''(t_n)=0,
\qquad n\in\mathbb N.
\]
Thus
\[
Q:=\frac{P'g'-Pg''}{g}
\]
is entire, and hence
\[
Pg''-P'g'+Qg=0.
\]

Assume now that $f$ has finite order. Since $f=P/g^2$, we have
\[
\rho(g)=\rho(f),
\qquad
\lambda(g)=\lambda(f).
\]
Moreover,
\[
T(r,f)=2T(r,g)+O(\log r).
\]
For an entire function $g$, the standard inequalities
\[
T(r,g)\le \log^+ M(r,g)
\le
\frac{R+r}{R-r}\,T(R,g),
\qquad 0<r<R,
\]
relate the Nevanlinna characteristic and the maximum modulus; see
\cite[Ch.~I, \S7, Thm.~7.1]{Book_Gold_Ostr}. Thus the finite-order growth
information obtained below for $g$ transfers to $f$, in particular the
equality of type and lower type. By Proposition~\ref{prop:squares-order},
\[
\rho(g)=\rho(f)\ge1.
\]
Moreover,
\[
Q=P'\frac{g'}{g}-P\frac{g''}{g}.
\]
Hence the lemma on the logarithmic derivative gives
\[
m(r,Q)=O(\log r),
\qquad r\to\infty.
\]
Since $Q$ is entire, it follows
that $Q$ is a polynomial; see, for example,
\cite[Ch.~I]{Book_Gold_Ostr}.

We can now apply Corollary~\ref{cor:ode-growth-parameters} to $g$. It gives
\[
\deg Q\ge \deg P-1,
\qquad
\rho(g)=\lambda(g)=\frac{\deg Q-\deg P+2}{2},
\qquad
\sigma(g)=\tau(g).
\]
The equality $\deg Q=\deg P-1$ would imply $\rho(g)=1/2$, contrary to
$\rho(g)\ge1$. Thus
\[
\deg Q\ge\deg P,
\]
and consequently
\[
\rho(f)=\lambda(f)=\rho(g)=\lambda(g)
\in\left\{1,\frac32,2,\frac52,\dots\right\}.
\]
Moreover, since $T(r,f)$ and $\log M(r,g)$ are comparable up to harmless
polynomial factors, the equality $\sigma(g)=\tau(g)$ implies
\[
\sigma(f)=\tau(f)
\]
with the type understood in the Nevanlinna sense.

It remains to find the form of the rational coefficient $Q/P$. Let
$p_1,\dots,p_n$ be the distinct zeros of $P$, and let $\alpha_k$ be the
multiplicity of $p_k$. Since $g$ and $P$ have no common zeros, $Q/P$
has at most simple poles at the points $p_k$, and a direct calculation gives
\[
\operatorname*{Res}_{z=p_k}\frac{Q(z)}{P(z)}
=
\alpha_k\,\frac{g'(p_k)}{g(p_k)}.
\]
Thus
\[
\frac{Q(z)}{P(z)}
=
S(z)+\sum_{k=1}^{n}
\alpha_k\,\frac{g'(p_k)}{g(p_k)}\,\frac{1}{z-p_k},
\]
where $S$ is a polynomial.
\end{proof}

\subsection{Reduction to normal form}

We next reduce the equation for $g$ to a normal form suitable for
asymptotic analysis at infinity. The first step is the standard passage to normal
form; see, for example, \cite[Ch.~7]{Hille_ODE_complex}.

\begin{lemma}\label{lem:normal_form}
Let $\Omega\subset\mathbb C$ be simply connected, let $A$ and $B$ be
analytic in $\Omega$, and let $f$ be a solution of
\[
f''+Af'+Bf=0
\]
in $\Omega$. Fix $z_0\in\Omega$, and put
\[
u(z)=f(z)\exp\!\left(\frac12\int\limits_{z_0}^{z}A(w)\,dw\right).
\]
Then
\[
u''+Ru=0,
\qquad
R=-\frac12A'-\frac14A^2+B.
\]
\end{lemma}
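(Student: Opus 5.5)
Set $E(z)=\exp\!\left(\frac12\int_{z_0}^{z}A(w)\,dw\right)$, which is well defined and analytic and zero-free in $\Omega$ because $\Omega$ is simply connected, so a primitive of $A$ exists there. Then $u=fE$ and $E'=\frac12AE$. The plan is a direct computation: express $f$ through $u$ as $f=uE^{-1}$, differentiate twice, and substitute into the equation. Alternatively one can differentiate $u=fE$ twice and eliminate $f''$ using the equation; either way the goal is to show $u''+Ru=0$ with $R$ as stated.

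First, from $E'=\frac12 AE$ we get $(E^{-1})'=-\frac12AE^{-1}$. Differentiating $f=uE^{-1}$ gives $f'=\bigl(u'-\frac12Au\bigr)E^{-1}$. Differentiating once more gives
\[
f''=\Bigl(u''-\tfrac12A'u-\tfrac12Au'-\tfrac12A\bigl(u'-\tfrac12Au\bigr)\Bigr)E^{-1}
=\Bigl(u''-Au'-\tfrac12A'u+\tfrac14A^2u\Bigr)E^{-1}.
\]

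Next, substitute into $f''+Af'+Bf=0$ and multiply by $E\neq0$:
\[
u''-Au'-\tfrac12A'u+\tfrac14A^2u+Au'-\tfrac12A^2u+Bu=0 .
\]
The $u'$ terms cancel, which is why the factor $\frac12$ appears in the exponent. The coefficients of $u$ combine to $-\frac12A'-\frac14A^2+B=R$, so $u''+Ru=0$.

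There is no real obstacle. The only point needing care is the well-definedness of $E$ and its nonvanishing, which come from simple connectivity of $\Omega$; the rest is bookkeeping of the $A^2$ coefficients, namely $\frac14-\frac12=-\frac14$.
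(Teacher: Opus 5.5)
Your proposal is correct and takes essentially the same approach as the paper, whose proof is just ``a direct calculation.'' You carry that calculation out explicitly via $f=uE^{-1}$: the $u'$ terms cancel, the $A^2$ coefficients combine as $\tfrac14-\tfrac12=-\tfrac14$, and simple connectivity of $\Omega$ guarantees that $E$ is well defined and zero-free.
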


\begin{proof}
A direct calculation gives the claim.
\end{proof}

For the equation arising from $f=P/g^2$, the normal form may be written
explicitly in terms of $P$ and $Q$.

\begin{corollary}\label{cor:normal_form_poly}
Let $P,Q$ be polynomials, and let $g$ be an entire solution of
\[
g''-\frac{P'}{P}\,g'+\frac{Q}{P}\,g=0
\]
in a simply connected domain $\Omega\subset\mathbb C$ containing no zeros
of $P$. Then, for any branch of
\[
u=\frac{g}{\sqrt P}
\]
in $\Omega$, one has
\[
u''+Ru=0,
\]
where
\[
R=\frac12\,\mathcal S(F)+\frac{Q}{P},
\qquad F'(z)=P(z),
\]
and $\mathcal S(F)$ is the Schwarzian derivative of $F$. Equivalently,
\[
R=
\frac12\!\left(\frac{P''}{P}-\frac32\Bigl(\frac{P'}{P}\Bigr)^2\right)
+\frac{Q}{P}.
\]
\end{corollary}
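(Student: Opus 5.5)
The statement follows from Lemma~\ref{lem:normal_form} with $A=-P'/P$ and $B=Q/P$. Since $\Omega$ is simply connected and contains no zeros of $P$, the function $A$ is analytic in $\Omega$. The primitive satisfies
\[
\exp\Bigl(\tfrac12\int_{z_0}^z A\Bigr)=\exp\Bigl(-\tfrac12\log P(z)+\tfrac12\log P(z_0)\Bigr)=c\,P(z)^{-1/2}
\]
for a branch of $\sqrt P$ in $\Omega$ and a nonzero constant $c$. Thus $u=g/\sqrt P$ equals the function of Lemma~\ref{lem:normal_form} up to a constant factor. The equation $u''+Ru=0$ is linear and homogeneous, so it is unaffected by constant factors. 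Changing the branch of $\sqrt P$ only changes the sign, so the statement holds for any branch.

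Next we compute $R$. We have $A'=-\bigl(P''/P-(P'/P)^2\bigr)$, so
\[
R=-\tfrac12A'-\tfrac14A^2+B=\tfrac12\frac{P''}{P}-\tfrac12\Bigl(\frac{P'}{P}\Bigr)^2-\tfrac14\Bigl(\frac{P'}{P}\Bigr)^2+\frac QP .
\]
This simplifies to $\tfrac12\bigl(P''/P-\tfrac32(P'/P)^2\bigr)+Q/P$, which is the second formula.

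Finally, we identify this with the Schwarzian form. Recall $\mathcal S(F)=(F''/F')'-\tfrac12(F''/F')^2$. With $F'=P$ this gives
\[
\mathcal S(F)=\frac{P''}{P}-\Bigl(\frac{P'}{P}\Bigr)^2-\tfrac12\Bigl(\frac{P'}{P}\Bigr)^2=\frac{P''}{P}-\tfrac32\Bigl(\frac{P'}{P}\Bigr)^2,
\]
so $\tfrac12\mathcal S(F)+Q/P$ agrees with the expression above.

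There is no real obstacle. The only points to check are the branch and constant normalization, and the sign conventions in $A'$ and in the definition of the Schwarzian.
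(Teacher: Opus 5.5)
Your proof is correct and is essentially the paper's approach: the paper states this corollary without a separate proof, as the specialization of Lemma~\ref{lem:normal_form} to $A=-P'/P$, $B=Q/P$. Your computations of $R=-\tfrac12A'-\tfrac14A^2+B$, of $\mathcal S(F)$ for $F'=P$, and your handling of the constant and branch factor in $g/\sqrt P$ fill in exactly the routine verifications the paper leaves implicit.
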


We shall also need the partial-fraction expansion of the Schwarzian term.

\begin{lemma}\label{lem:schwarzian-partial-fractions}
Let
\[
P(z)=\prod_{k=1}^{n}(z-p_k)^{m_k}
\]
be a polynomial. Then
\[
\mathcal S(F)
=
\frac{P''}{P}-\frac32\Bigl(\frac{P'}{P}\Bigr)^2
=
-\sum_{k=1}^{n}\frac{m_k(m_k+2)}{2\,(z-p_k)^2}
-
\sum_{k=1}^{n}\frac{M_k}{z-p_k},
\]
where
\[
M_k=
\frac12\sum_{\substack{j=1\\ j\neq k}}^{n}
\frac{m_km_j}{p_k-p_j},
\qquad
\sum_{k=1}^{n}M_k=0.
\]
\end{lemma}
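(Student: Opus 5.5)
The plan is to compute directly with the logarithmic derivative
\[
L=\frac{P'}{P}=\sum_{k=1}^{n}\frac{m_k}{z-p_k}.
\]
First, since $F'=P$, we have $F''/F'=L$. Hence
\[
\mathcal S(F)=L'-\tfrac12L^2 .
\]
Also $P''/P=L'+L^2$, and substituting this gives the first equality $\mathcal S(F)=P''/P-\tfrac32 L^2$. The remaining work is to find the partial-fraction expansion of $L'-\tfrac12L^2$.

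For the double poles, note that $L'=-\sum_k m_k(z-p_k)^{-2}$. Expanding the square gives
\[
L^2=\sum_k \frac{m_k^2}{(z-p_k)^2}+\sum_{k\neq j}\frac{m_km_j}{(z-p_k)(z-p_j)},
\]
where the last sum runs over ordered pairs. The diagonal part contributes $-\bigl(m_k+\tfrac12 m_k^2\bigr)(z-p_k)^{-2}=-\tfrac{m_k(m_k+2)}{2}(z-p_k)^{-2}$. The cross terms have only simple poles, so this settles the double-pole coefficients.

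For the simple poles, I would use the identity
\[
\frac{1}{(z-p_k)(z-p_j)}=\frac{1}{p_k-p_j}\Bigl(\frac1{z-p_k}-\frac1{z-p_j}\Bigr).
\]
Then I collect the coefficient of $(z-p_k)^{-1}$ coming from the ordered pairs $(k,j)$ and $(j,k)$. By antisymmetry of $(p_k-p_j)^{-1}$, the two contributions are equal, so the coefficient in $L^2$ is $2\sum_{j\ne k} m_km_j/(p_k-p_j)$. After multiplying by $-\tfrac12$, this gives the simple-pole part of $\mathcal S(F)$.

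The relation $\sum_k M_k=0$ follows from the same antisymmetry: the terms $m_km_j/(p_k-p_j)$ cancel in pairs. Alternatively, $\mathcal S(F)=O(z^{-2})$ at infinity, because $L=p/z+O(z^{-2})$, so the sum of the residues must vanish.

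The only genuinely delicate step is the bookkeeping of the numerical factor in the residues. My computation above gives the residue at $p_k$ as $-\sum_{j\ne k}m_km_j/(p_k-p_j)$, which is $-2M_k$ with $M_k$ as defined in the statement. A check with $P=z(z-1)$ gives residue $1$ at $z=0$. The formula above reproduces this value, whereas $-M_1=\tfrac12$ does not. So I expect that the factor $\tfrac12$ in the definition of $M_k$ must be dropped, i.e.\ $M_k=\sum_{j\ne k}m_km_j/(p_k-p_j)$, and I would verify this normalization carefully before relying on it later. Neither the vanishing of $\sum_k M_k$ nor the double-pole coefficients depend on this factor.
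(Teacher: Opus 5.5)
Your argument is correct and is the same direct computation the paper uses. The paper substitutes $P'/P=\sum_k m_k/(z-p_k)$ into $\mathcal S(F)=(P'/P)'-\tfrac12(P'/P)^2$ and appeals to antisymmetry for $\sum_k M_k=0$; it never writes out the partial-fraction bookkeeping for the simple poles.

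You are also right that the statement's normalization is off by a factor of $2$. The residue of $\mathcal S(F)$ at $p_k$ is $-\sum_{j\ne k}m_km_j/(p_k-p_j)=-2M_k$, and your check with $P=z(z-1)$, giving residue $1$ at $0$, is correct. So the factor $\tfrac12$ in the definition of $M_k$ should be dropped. The paper's later $z(z-1)$ example in fact already uses the correct value: there $R$ has residue $\tfrac12-c$ at $0$, not $\tfrac14-c$.

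The error is harmless downstream. The double-pole coefficients and the identity $\sum_k M_k=0$ are unaffected. In the proof of Theorem~\ref{thm:laine-variety}, $b_{k,1}=D_k-M_k/2$ should read $D_k-M_k$. This is still $D_k$ plus a constant, so the leading form $D_k^{m_k}+W_k$ and the dimension count go through unchanged.
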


\begin{proof}
Since
\[
\frac{P'}{P}=\sum_{k=1}^{n}\frac{m_k}{z-p_k},
\qquad
\left(\frac{P'}{P}\right)'=
-\sum_{k=1}^{n}\frac{m_k}{(z-p_k)^2},
\]
substituting into
\[
\mathcal S(F)=\left(\frac{P'}{P}\right)'-\frac12\left(\frac{P'}{P}\right)^2
\]
gives the stated expansion. The identity $\sum_{k=1}^{n}M_k=0$ follows by
antisymmetry under the interchange of indices.
\end{proof}

To prepare for the Liouville transformation, we now normalize the leading
part of the potential $R$ at infinity.

\begin{lemma}[Normalization under an affine change of variables]\label{lem:affine-normalization}
Assume that
\[
P(z)=z^p+a z^{p-1}+O(z^{p-2}),
\qquad
Q(z)=b z^q+c z^{q-1}+O(z^{q-2}),
\qquad b\neq0,
\]
and set $N=q-p\ge0$. Let $R$ be the coefficient in
Corollary~\ref{cor:normal_form_poly}. Then:

\smallskip
\noindent
\textup{(i)} if $N\ge1$, there exist $\mu>0$ and $\beta\in\mathbb C$
such that
\[
\mu^2 R(\mu z+\beta)=z^N\bigl(1+O(z^{-2})\bigr),
\qquad z\to\infty;
\]

\smallskip
\noindent
\textup{(ii)} if $N=0$, there exists $\mu>0$ such that
\[
\mu^2R(\mu z)=1+\frac{D}{z}+O(z^{-2}),
\qquad z\to\infty,
\]
where
\[
D=\frac1{\sqrt b}\sum_{k=1}^{p}D_k
\]
and the coefficients $D_k$ are those occurring in the partial-fraction
expansion of $Q/P$.
\end{lemma}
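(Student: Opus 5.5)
The plan is a direct expansion of $R$ at infinity followed by a choice of the affine parameters that kills the leading non-normalized terms. By Corollary~\ref{cor:normal_form_poly}, $R=\tfrac12\mathcal S(F)+Q/P$. By Lemma~\ref{lem:schwarzian-partial-fractions}, $\mathcal S(F)$ is a finite sum of terms $(z-p_k)^{-2}$ and $(z-p_k)^{-1}$ with $\sum_k M_k=0$, so its $z^{-1}$ coefficient vanishes and $\mathcal S(F)=O(z^{-2})$. Hence everything is governed by $Q/P$. Long division gives
\[
\frac{Q(z)}{P(z)}=b z^{N}+(c-ab)z^{N-1}+O(z^{N-2}),\qquad z\to\infty.
\]
Since $N\ge0$, the Schwarzian contribution $O(z^{-2})$ is absorbed into $O(z^{N-2})$. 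Therefore
\[
R(z)=b z^{N}+(c-ab)z^{N-1}+O(z^{N-2}).
\]

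\emph{Case $N\ge1$.} I substitute $z\mapsto \mu z+\beta$ and expand by the binomial formula:
\[
\mu^2R(\mu z+\beta)=b\mu^{N+2}z^N+\mu^{N+1}\bigl(Nb\beta+c-ab\bigr)z^{N-1}+O(z^{N-2}).
\]
I choose $\beta=(ab-c)/(Nb)$, which is possible since $N\ge1$ and $b\ne0$; this kills the $z^{N-1}$ term. I then choose $\mu$ with $b\mu^{N+2}=1$, which gives $z^N(1+O(z^{-2}))$.

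The one point needing care is the requirement $\mu>0$. A positive $\mu$ can only normalize $|b|$, namely $\mu=|b|^{-1/(N+2)}$. The remaining unimodular factor $e^{i\arg b}$ must be removed by a rotation $z\mapsto \omega z$ with $\omega^{N+2}=e^{-i\arg b}$. So I would read the statement as allowing the affine map $z\mapsto\mu\omega z+\beta$, or equivalently as normalizing up to a rotation. This is harmless, since all subsequent uses concern Stokes geometry, which only rotates. I would state this convention explicitly in the proof; it is the only genuine obstacle, and the rest is bookkeeping.

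\emph{Case $N=0$.} Here $\deg Q=\deg P$. By Theorem~\ref{thm:finite-zero-reduction}, with $g$ and $P$ having no common zeros, $Q/P$ has only simple poles, so
\[
\frac{Q}{P}=b+\sum_k\frac{D_k}{z-p_k},
\]
with polynomial part the constant $b$. Expanding each $(z-p_k)^{-1}=z^{-1}+O(z^{-2})$ gives
\[
\frac{Q}{P}=b+\frac{\sum_k D_k}{z}+O(z^{-2}),
\qquad\text{so}\qquad \sum_k D_k=c-ab.
\]
If repeated roots were counted with multiplicity, only simple-pole residues contribute to the $z^{-1}$ term anyway. Combining this with $\mathcal S(F)=O(z^{-2})$ yields
\[
\mu^2R(\mu z)=\mu^2 b+\frac{\mu\sum_kD_k}{z}+O(z^{-2}).
\]
I take $\mu=1/\sqrt b$, with the same rotation caveat as above if $b\notin(0,\infty)$. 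This gives the constant term $1$ and $D=\frac1{\sqrt b}\sum_k D_k$, as claimed. No shift $\beta$ is needed here, and in fact a shift would not change the $z^{-1}$ coefficient in this case.
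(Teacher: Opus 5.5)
Your proof is correct and is essentially the paper's argument. Expand $Q/P=bz^N+(c-ab)z^{N-1}+O(z^{N-2})$ at infinity, absorb the $O(z^{-2})$ Schwarzian term, and then take $\beta=(ab-c)/(bN)$ with $b\mu^{N+2}=1$ (respectively $\mu=b^{-1/2}$ when $N=0$).

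Your caveat about $\mu>0$ is justified. The paper's own choice $\mu=b^{-1/(N+2)}$ is complex unless $b>0$, which matches the rotated Stokes directions $\theta_k=\frac{2\pi k}{N+2}-\frac{\arg b}{N+2}$ used later. So the statement should really allow $\mu\in\mathbb C\setminus\{0\}$.
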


\begin{proof}
From the expansions of $P$ and $Q$ one obtains
\[
\frac{Q(z)}{P(z)}
=
\begin{cases}
z^N\!\left(b+\dfrac{c-ab}{z}+O(z^{-2})\right), & N\ge1,\\[1.2ex]
b+\dfrac{\sum_{k=1}^{p}D_k}{z}+O(z^{-2}), & N=0.
\end{cases}
\]
Since the Schwarzian term in Corollary~\ref{cor:normal_form_poly} is
$O(z^{-2})$ at infinity, the same asymptotics holds for $R$.

If $N\ge1$, then
\[
\mu^2R(\mu z+\beta)
=
b\mu^{N+2}z^N
+
\mu^{N+1}(bN\beta+c-ab)z^{N-1}
+
O(z^{N-2}).
\]
Choosing
\[
\mu=b^{-1/(N+2)},
\qquad
\beta=\frac{ab-c}{bN},
\]
gives
\[
\mu^2R(\mu z+\beta)=z^N\bigl(1+O(z^{-2})\bigr).
\]

If $N=0$, then
\[
\mu^2R(\mu z)=\mu^2b+\frac{\mu\sum_{k=1}^{p}D_k}{z}+O(z^{-2}),
\]
and the choice $\mu=b^{-1/2}$ yields the claim.
\end{proof}

Thus, after an affine change of variables, the normal form becomes a
perturbation of the model equation $u''+z^N u=0$.

\begin{corollary}[Reduction to normal form]\label{cor:normalized-equation}
Under the assumptions of Theorem~\ref{thm:finite-zero-reduction}, successive
applications of Corollary~\ref{cor:normal_form_poly} and
Lemma~\ref{lem:affine-normalization} reduce the equation
\[
g''-\frac{P'}{P}\,g'+\frac{Q}{P}\,g=0
\]
to
\begin{equation}\label{eq:ode}
u''+Ru=0,
\end{equation}
where
\[
u(z)=\frac{g(\mu z+\beta)}{\sqrt{P(\mu z+\beta)}}
\]
and
\[
R(z)=
\begin{cases}
z^N\bigl(1+O(z^{-2})\bigr), & N\ge1,\\[1ex]
1+\dfrac{D}{z}+O(z^{-2}), & N=0.
\end{cases}
\]
\end{corollary}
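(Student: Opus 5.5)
The corollary is a composition of two changes of variables, so I would prove it by chaining Corollary~\ref{cor:normal_form_poly} and Lemma~\ref{lem:affine-normalization} and checking how the potential transforms.

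\textbf{Step 1: normal form.} By Theorem~\ref{thm:finite-zero-reduction}, $g$ is entire and solves $Pg''-P'g'+Qg=0$. Dividing by $P$ gives $g''-\frac{P'}{P}g'+\frac QP g=0$, with $P,Q$ polynomials and $N=\deg Q-\deg P\ge0$. Take a simply connected domain $\Omega$ that avoids the finitely many zeros of $P$, for instance the complement of a union of cuts, or any sector near infinity. Corollary~\ref{cor:normal_form_poly} then shows that $v=g/\sqrt P$ satisfies $v''+\tilde R v=0$, where $\tilde R=\tfrac12\mathcal S(F)+Q/P$. We may assume $P$ is monic: rescaling $P$ by a constant changes neither the equation nor $\mathcal S(F)$, and only multiplies $v$ by a constant.

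\textbf{Step 2: affine substitution.} Set $u(z)=v(\mu z+\beta)$, with $\mu,\beta$ the constants supplied by Lemma~\ref{lem:affine-normalization}; take $\beta=0$ when $N=0$. The chain rule gives
\[
u''(z)=\mu^2 v''(\mu z+\beta)=-\mu^2\tilde R(\mu z+\beta)\,u(z).
\]
Hence $u''+Ru=0$ with $R(z)=\mu^2\tilde R(\mu z+\beta)$. Here $u(z)=g(\mu z+\beta)/\sqrt{P(\mu z+\beta)}$, as the statement requires.

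\textbf{Step 3: asymptotics of $R$.} Lemma~\ref{lem:affine-normalization} gives $R(z)=z^N(1+O(z^{-2}))$ when $N\ge1$, and $R(z)=1+D/z+O(z^{-2})$ when $N=0$. That lemma already absorbs the Schwarzian term $\tfrac12\mathcal S(F)=O(z^{-2})$ into the error; this is consistent with Lemma~\ref{lem:schwarzian-partial-fractions}. In the case $N\ge1$ the term $\tfrac12\mathcal S(F)$ is even smaller relative to $z^N$, so it fits inside $z^NO(z^{-2})$.

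\textbf{Main point to check.} The only real subtlety is the constant $\mu$. Lemma~\ref{lem:affine-normalization} states $\mu>0$, but it defines $\mu=b^{-1/(N+2)}$, which is in general complex. To be safe, I would allow a complex $\mu$; equivalently, one could keep the factor $b$ and absorb it later. Nothing in the computation above depends on $\mu$ being real or positive.
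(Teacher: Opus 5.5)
Your proposal is correct and is essentially the argument the paper intends. The paper gives no separate proof and presents the corollary as the composition of Corollary~\ref{cor:normal_form_poly} with the affine substitution; your chain-rule identity $R(z)=\mu^2\tilde R(\mu z+\beta)$ is exactly the link to Lemma~\ref{lem:affine-normalization}. Your remark that $\mu=b^{-1/(N+2)}$ must be allowed to be complex, despite the lemma's ``$\mu>0$'', is also right, and it agrees with the paper's later Stokes directions $\theta_k$, which include the rotation $-\arg b/(N+2)$.
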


\subsection{The Liouville transformation}

After Corollary~\ref{cor:normalized-equation}, the equation has the form \eqref{eq:ode}, where $R$ is analytic near infinity and has the asymptotic form stated
there. We shall use the Liouville transformation
\[
w=\int\limits_{z_0}^{z}\sqrt{R(\zeta)}\,d\zeta,
\qquad
U(w)=R(z)^{1/4}u(z),
\]
which reduces \eqref{eq:ode} to a perturbation of the constant-coefficient
equation $U''+U=0$; see Hille \cite[Ch.~7]{Hille_ODE_complex}. The
presentation below follows closely Gundersen--Heittokangas--Zemirni
\cite{Gunder_Heittokangas_Asymp_Int_Theory}. In the new variable one obtains
\[
U''(w)+\bigl(1-T(w)\bigr)U(w)=0,
\]
where
\[
T(w)=\frac14\left(
\frac{R''(z)}{R(z)^2}
-\frac{5R'(z)^2}{4R(z)^3}
\right).
\]

For the rest of this subsection, let $\mathcal R$ be the class of functions
$R$ analytic near infinity for which, with some $N\in\mathbb N_0$,
\[
R(z)=
\begin{cases}
z^N\bigl(1+O(z^{-2})\bigr), & N\ge1,\\[1ex]
1+\dfrac{D}{z}+O(z^{-2}), & N=0,
\end{cases}
\qquad z\to\infty,
\]
where $D\in\mathbb C$. We write
\[
\ell(z)=\frac{2}{N+2}\,z^{\frac{N+2}{2}}.
\]
For $k=0,\dots,N+1$, $r>0$, and $\delta\ge0$, set
\[
\theta_k=\frac{2\pi k}{N+2},
\qquad
G_k(r,\delta)=
\bigl\{
z\in\mathbb C:\ |z|>r,\ 
\arg z\in(\theta_{k-1}+\delta,\theta_{k+1}-\delta)
\bigr\},
\]
and write $G_k(r)=G_k(r,0)$. Finally, put
\[
\varepsilon_{N,D}(r)=
\begin{cases}
r^{-2}, & N\ge3,\\[0.6ex]
(\log r)\,r^{-2}, & N=2,\\[0.6ex]
r^{-3/2}, & N=1,\\[0.6ex]
\bigl(1+|D|\log r\bigr)r^{-1}, & N=0.
\end{cases}
\]

For $r>0$ and $\delta>0$, put
\[
\Omega_k(r,\delta)=
\Bigl\{
w\in\mathbb C:\ |w|>r^{\frac{N+2}{2}},\
|\arg w-\pi k|<\pi-\frac{N+2}{2}\delta
\Bigr\}.
\]
The first estimate fixes the geometry of the Liouville map. It gives a
well-defined branch of the transformation in each sector and compares it with
the model map $\ell$. It is based on
\cite[Lem.~2.1,~2.2]{Gunder_Heittokangas_Asymp_Int_Theory}.

\begin{lemma}\label{lem:liouville-asymptotics}
Let $R\in\mathcal R$, and let $N=N(R)$. Then, for all sufficiently large
$r>0$ and each $k=0,\dots,N+1$, the sector $G_k(r)$ admits analytic
branches of $R^\alpha$, $\alpha\in\mathbb C$. In particular, after
choosing a branch of $\sqrt R$, the Liouville map
\begin{equation}\label{eq:liouville-map}
L_k(z)=\int\limits_{z_0}^{z}\sqrt{R(\zeta)}\,d\zeta,
\qquad z_0\in G_k(r),
\end{equation}
is well defined and analytic in $G_k(r)$.

Moreover, $r$ may be chosen so that $L_k$ is univalent in $G_k(r)$, and
\[
L_k(z)=
\frac{2}{N+2}z^{\frac{N+2}{2}}
\bigl(1+O(\varepsilon_{N,D}(|z|))\bigr),
\qquad z\to\infty,\quad z\in G_k(r).
\]

Finally, for every sufficiently small $\delta>0$ there exists
$r_0=r_0(\delta)>0$ such that, for all $r>r_0$ and each
$k=0,\dots,N+1$,
\[
G_k(c_1r,3\delta)
\subset
L_k^{-1}(\Omega_k(r,\delta))
\subset
G_k(c_2r,\delta),
\]
where $c_1,c_2>0$ depend only on $N$ and $\delta$.
\end{lemma}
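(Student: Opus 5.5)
Write $R(z)=z^N(1+\eta(z))$, where $\eta(z)=O(z^{-2})$ for $N\ge1$ and $\eta(z)=D/z+O(z^{-2})$ for $N=0$; both hold for $|z|$ large. The plan has four steps: define branches and the map, expand the integral, prove univalence, and compare sectors.

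\textbf{Branches.} The sector $G_k(r)$ has opening $\frac{4\pi}{N+2}\le 2\pi$ and does not contain the origin, so it is simply connected. For $r$ large, $1+\eta$ omits a neighbourhood of $0$ there. Hence $R^\alpha=z^{N\alpha}(1+\eta)^\alpha$ has analytic branches, using the principal branch for $(1+\eta)^\alpha$ and any branch of $z^{N\alpha}$ on the sector. This makes $L_k$ in \eqref{eq:liouville-map} well defined and analytic.

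\textbf{Asymptotics.} Expand $\sqrt R=z^{N/2}\bigl(1+\tfrac12\eta+O(\eta^2)\bigr)$ and integrate from $z_0$ to $z$ along a path in $G_k(r)$: a radial segment followed by an arc of length $O(|z|)$. The main term integrates exactly to $\ell(z)-\ell(z_0)$. The error $\int z^{N/2}\eta$ is bounded by $\int^{|z|}t^{N/2-2}\,dt$ for $N\ge1$. This gives
\begin{itemize}
\item $O(|z|^{N/2-1})$ for $N\ge3$, and relative to $|z|^{(N+2)/2}$ that is $O(|z|^{-2})$;
\item a logarithm for $N=2$, giving the relative error $(\log r)r^{-2}$;
\item a bounded integral for $N=1$, and together with the constant $\ell(z_0)$ this is relatively $O(r^{-3/2})$.
\end{itemize}
For $N=0$, the term $\tfrac D2\int d\zeta/\zeta$ produces $\tfrac D2\log z$, giving the relative error $(1+|D|\log r)/r$.

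\textbf{Univalence.} I would show univalence of $L_k$ via the model map. On $G_k(r)$ the function $\ell$ is univalent, since $\arg \ell=\frac{N+2}{2}\arg z$ ranges over an interval of length $2\pi$, open at the ends. Then I would use the Noshiro--Warschawski criterion on convex subpieces. Take $Z=z^{(N+2)/2}$ as the new variable. In that variable $L_k\circ\ell^{-1}(W)$ has derivative $\sqrt{R}/z^{N/2}=1+o(1)$, so $\Re(\text{derivative})>0$. That gives injectivity on convex subsets of the $W$-slit domain.

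The difficulty is that the $W$-image $\{|W|>\rho,\ |\arg W-\pi k|<\pi\}$ is not convex. I would handle this by comparing $L_k(z)-L_k(z')$ with $\ell(z)-\ell(z')$ along a path in the $W$-plane. The path is the segment if available, or else an arc around $|W|=\rho'$ of length comparable to $|W-W'|$. The integral of the derivative-minus-one along it is then small relative to $|W-W'|$, and this yields injectivity after enlarging $r$. I expect this global univalence near the slit directions to be the main obstacle.

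\textbf{Sector inclusions.} The inclusions follow from the asymptotics. For $z\in G_k(c_1r,3\delta)$ we have $\arg L_k(z)=\frac{N+2}{2}\arg z+o(1)$ (fixing branches so the arguments align with $\pi k$) and $|L_k(z)|\ge\frac{2}{N+2}(c_1r)^{(N+2)/2}(1-o(1))$. So $L_k(z)\in\Omega_k(r,\delta)$ once $c_1$ is chosen large and $r_0(\delta)$ is large enough to absorb the $o(1)$ angular error against the margin $\frac{N+2}{2}\cdot2\delta$.

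Conversely, let $w\in\Omega_k(r,\delta)$ lie in the image, $w=L_k(z)$. Inverting the asymptotics, the angular closeness and the modulus bound force $z\in G_k(c_2r,\delta)$ with $c_2$ small. One caveat: the definition of the sectors involves $\delta$ measured in $z$-angles, so the angular margin of $\Omega_k(r,\delta)$ corresponds exactly to $\delta$ in $G_k$. Therefore the second inclusion needs the sharper uniform estimate $\arg L_k=\frac{N+2}2\arg z+o(1)$ with $o(1)\to0$ as $r\to\infty$. That estimate is available because $\varepsilon_{N,D}(r)\to0$.
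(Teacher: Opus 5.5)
\textbf{Gap: univalence on the full sector $G_k(r)$.} Your construction of the branches and your derivation of $L_k(z)=\ell(z)\bigl(1+O(\varepsilon_{N,D}(|z|))\bigr)$ are correct. They are also more explicit than the paper, which only cites \cite[Lem.~2.1,~2.2]{Gunder_Heittokangas_Asymp_Int_Theory} and asserts that the proofs extend. The genuine gap is the univalence step, where you anticipated trouble, and it cannot be closed: univalence of $L_k$ on the \emph{full} sector $G_k(r)$ is false in general.

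In the plane $W=\ell(z)$, the sector $G_k(r)$ becomes the exterior of a disc slit along $\arg W=\pi(k+1)$. This slit is the common image of the two boundary rays $\arg z=\theta_{k\pm1}$. Points on opposite edges of the slit can be arbitrarily close in the Euclidean sense, yet every path joining them inside the domain has length $\gtrsim|W|$. So the path ``of length comparable to $|W-W'|$'' does not exist. Moreover, the lower-order part of $\Phi=L_k\circ\ell^{-1}$ jumps across the slit by an amount that is not small compared with $|W-W'|$.

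Here is a concrete counterexample. Take $R(z)=z^2-1$, which is the already normalized potential of $g''+(z^2-1)g=0$, so $N=2$. Take $k=0$, so $G_0(r)=\{|z|>r,\ \Re z>0\}$, and a real base point $z_0>1$. With principal branches,
$L_0(z)=\tfrac12 z\sqrt{z^2-1}-\tfrac12\log\bigl(z+\sqrt{z^2-1}\bigr)+c$ with $c\in\mathbb R$, so $L_0(\bar z)=\overline{L_0(z)}$.
\begin{itemize}
\item For $y>1$, as $x\to0^+$ we have $\sqrt{z^2-1}\to i\sqrt{y^2+1}$. The first term becomes real and the logarithm contributes $-i\pi/4$, so $\Im L_0(x+iy)\to-\pi/4$.
\item At the other end, $\Im L_0(y+iy)=y^2+O(1)$.
\end{itemize}
Hence for every large $y$ there is $x\in(0,y)$ with $\Im L_0(x+iy)=0$, that is, $L_0(x+iy)=L_0(x-iy)$. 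Both points lie in $G_0(r)$, whatever $r$ is.

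\textbf{What can be proved, and a second slip.} What is true, and what your comparison argument actually proves, is univalence on $G_k(r,\delta)$ for $r\ge r_0(\delta)$. Its $W$-image $\{|W|>\rho,\ |\arg W-\pi k|<\pi-\tfrac{N+2}{2}\delta\}$ stays a positive angle away from the slit. Any two of its points are joined inside it by a radial segment and a circular arc of total length at most $C(\delta)|W-W'|$, with $C(\delta)=O(1/\delta)$. Then $|\Phi(W)-\Phi(W')-(W-W')|\le C(\delta)\sup|\Phi'-1|\,|W-W'|<|W-W'|$ once $r$ is large. This is all that is needed later: a single-valued inverse of $L_k$ on $\Omega_k(r,\delta)$, as in Lemma~\ref{lem:liouville-transform}. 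So the statement should be amended rather than proved as written.

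A smaller slip of the same kind occurs in your right-hand inclusion. An angular error that is merely $o(1)$ cannot give containment in $G_k(c_2r,\delta)$ with the \emph{same} $\delta$. This fails already for $R\equiv1$ and $L_0(z)=z-z_0$ with $\Im z_0<0$: far points of the ray $\arg z=\pi-\delta$, which is not in $G_0(\cdot,\delta)$, are mapped into $\Omega_0(r,\delta)$. What your argument gives is $L_k^{-1}(\Omega_k(r,\delta))\subset G_k(c_2r,\delta/2)$ for $r\ge r_0(\delta)$, which is enough for the later applications.
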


Although \cite{Gunder_Heittokangas_Asymp_Int_Theory} states the quoted
estimates for polynomial $R$, the proofs use only the sectorial asymptotic
behaviour of $R$ at infinity, and hence apply to the class $\mathcal R$
considered here. The right-hand inclusion in the final assertion is not stated
explicitly there; it follows by the same argument from the sectorial
asymptotics of $L_k$.

We now apply the transformation to the differential equation. In the
$w$-plane, the equation becomes a small perturbation of $U''+U=0$. The
following form is obtained from
\cite[Lem.~2.3]{Gunder_Heittokangas_Asymp_Int_Theory}.

\begin{lemma}\label{lem:liouville-transform}
Let $R\in\mathcal R$, and let $\delta>0$ be sufficiently small. Then
there exist $c=c(N,\delta)>0$ and $r_0=r_0(\delta)>0$ such that, for
$r>r_0$ and $k=0,\dots,N+1$, the following holds. If $u$ is an analytic
solution of
\[
u''+Ru=0
\]
in $G_k(r)$, and
\[
w=L_k(z),
\qquad
U(w)=u(z)R(z)^{1/4},
\]
then $U$ is analytic in $\Omega_k(cr,\delta)$ and satisfies
\[
U''(w)+(1-T(w))U(w)=0,
\]
where
\[
T(w)=
\frac14\left(
\frac{R''(z)}{R(z)^2}
-\frac{5R'(z)^2}{4R(z)^3}
\right)
=O(w^{-2}),
\qquad
w\to\infty,\quad w\in\Omega_k(cr,\delta).
\]
\end{lemma}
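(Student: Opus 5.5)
The proof has two parts: an algebraic identity for the transformed equation, and an estimate of the error term $T$.

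\emph{The identity.} Fix $k$ and the branches of $\sqrt R$ and $R^{1/4}$ in $G_k(r)$ given by Lemma~\ref{lem:liouville-asymptotics}. Choose $r$ so large that $L_k$ is univalent there. Then $z=L_k^{-1}(w)$ is an analytic inverse on $L_k(G_k(r))$. By the right-hand inclusion of Lemma~\ref{lem:liouville-asymptotics} (with $c=1/c_2$ adjusted as needed), this image contains $\Omega_k(cr,\delta)$. Hence $U(w)=u(z)R(z)^{1/4}$ is analytic in $\Omega_k(cr,\delta)$. The equation for $U$ is the classical Liouville computation. Since $dw/dz=R^{1/2}$, we have
\[
\frac{dU}{dw}=R^{-1/2}\bigl(u'R^{1/4}+\tfrac14 uR^{-3/4}R'\bigr)=R^{-1/4}u'+\tfrac14R^{-5/4}R'u .
\]
Differentiating once more and multiplying by $R^{-1/2}$, the $u'$ terms cancel, and $u''=-Ru$ is substituted. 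This gives $U''=-U+\tfrac14\bigl(R''R^{-2}-\tfrac54R'^2R^{-3}\bigr)U$, which is the stated identity with the stated $T$. It is a routine calculation, which I would only indicate.

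\emph{The estimate of $T$.} Here I would use the asymptotics of the class $\mathcal R$. For $N\ge1$ we have $R=z^N(1+O(z^{-2}))$. Since $R$ is analytic near infinity, Cauchy's estimates on discs of radius $|z|/2$ give $R'=Nz^{N-1}(1+O(z^{-1}))$ and $R''=O(z^{N-2})$. Therefore $T(w)=O(|z|^{-N-2})$. For $N=0$, $R=1+D/z+O(z^{-2})$ gives $R'=O(z^{-2})$ and $R''=O(z^{-3})$. In that case $T=O(|z|^{-3})$, which is even better.

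Next I would convert these bounds to the $w$-variable. By Lemma~\ref{lem:liouville-asymptotics}, $|w|\asymp|z|^{(N+2)/2}$ uniformly on $G_k(r)$, because $1+O(\varepsilon_{N,D})$ is bounded away from $0$ for large $r$. Hence $|z|^{-N-2}\asymp|w|^{-2}$, and $T=O(w^{-2})$ follows. In the case $N=0$ we get $|z|^{-3}\lesssim|w|^{-2}$ as well.

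\emph{The main obstacle.} The delicate point is the geometry: showing that $\Omega_k(cr,\delta)$ lies inside the univalent image $L_k(G_k(r))$, with $c$ depending only on $N$ and $\delta$. The right-hand inclusion $L_k^{-1}(\Omega_k(\rho,\delta))\subset G_k(c_2\rho,\delta)$ in Lemma~\ref{lem:liouville-asymptotics} is stated for preimages, not images. I would therefore argue via the argument principle on truncated sectors. The map $L_k$ is close to the model map $\ell$, which sends $G_k(r)$ univalently onto a slit region containing $\{|\arg w-\pi k|<\pi,\ |w|>\ell(r)\}$. The relative error $O(\varepsilon_{N,D})$ moves boundary images only by a small angle and modulus. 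Rouché's theorem then shows that every $w$ in the slightly shrunken sector $\Omega_k(cr,\delta)$ has exactly one preimage. The constant $c$ absorbs the factor $2/(N+2)$ and the loss of angle of order $\delta$.
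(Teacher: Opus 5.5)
Your proof is correct, and it follows the route the paper takes by citation. The paper gives no argument of its own: it quotes Lemma~2.3 of Gundersen--Heittokangas--Zemirni and remarks that the transformed equation and the bound $T(w)=O(w^{-2})$ use only the sectorial asymptotics of $R$. You carry this out explicitly. You do the Liouville computation, then use Cauchy estimates on discs of radius $|z|/2$ to get $T=O(|z|^{-N-2})$, resp.\ $O(|z|^{-3})$ for $N=0$; these estimates are legitimate because $R\in\mathcal R$ is analytic in a full neighbourhood $\{|z|>r_0\}$ of infinity. Finally you convert via $|w|\asymp|z|^{(N+2)/2}$. This makes the extension from polynomial $R$ to $\mathcal R$ transparent rather than merely asserted.

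One point needs cleaning up. In your first paragraph you claim that the right-hand inclusion of Lemma~\ref{lem:liouville-asymptotics} gives $\Omega_k(cr,\delta)\subset L_k(G_k(r))$. That inference is not valid: both inclusions there are statements about preimages and give no surjectivity, as you observe yourself later. Drop that sentence and rely on your Rouch\'e argument instead.

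A clean version of that argument is as follows. For $w_0\in\Omega_k(cr,\delta)$, put $z^*=\ell^{-1}(w_0)$. Compare $L_k-w_0$ with $\ell-w_0$ on the circle $|z-z^*|=\eta|z^*|$, where $\eta=\eta(\delta)$ is small enough that the disc lies in $G_k(r)$. On that circle $|\ell-w_0|\ge c_\eta|w_0|$, while $|L_k-\ell|=O(\varepsilon_{N,D}(|z|))\,|w_0|$. Hence, for large $r$, $L_k$ takes the value $w_0$ exactly once in the disc.

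This is the step the paper leaves implicit in the citation. It is also what produces the stated dependences $c=c(N,\delta)$ and $r_0=r_0(\delta)$.
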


Again, the cited result is stated for polynomial $R$, but the derivation of
the transformed equation and the estimate $T(w)=O(w^{-2})$ use only the
sectorial asymptotics at infinity. Thus the same argument applies to
$R\in\mathcal R$.

It remains to translate localization in the $w$-plane back to angular
localization in the original $z$-plane. For $w_0\in\mathbb C$ and
$\varepsilon>0$, put
\[
\Pi_k(w_0,\varepsilon)=
\{\,w\in\mathbb C:\ |\Im w-\Im w_0|\le\varepsilon,\ 
(-1)^k\Re (w-w_0)\ge 0\,\}.
\]
We also denote by
\[
\Lambda_k(r,C)=
\bigl\{
z\in\mathbb C:\ |z|>r,\ 
|\arg z-\theta_k|<C\,\varepsilon_{N,D}(|z|)
\bigr\}
\]
the angular neighbourhood of the Stokes ray
\[
\Gamma_k=\{\,z\in\mathbb C:\ \arg z=\theta_k\,\}.
\]

\begin{lemma}\label{lem:preimage-horizontal-ray}
Let $R\in\mathcal R$, and let $\delta>0$ be sufficiently small. Then, for
each $k=0,\dots,N+1$, there exists $\rho_0=\rho_0(\delta)>0$ such that,
for all $\rho>\rho_0$ and all $w_0\in\Omega_k(\rho,\delta)$,
\[
L_k^{-1}\!\bigl(\Pi_k(w_0,\varepsilon)\bigr)
\subset
\Lambda_k(c\rho,C),
\]
where $C>0$ depends on $|w_0|$, $\delta$, $\varepsilon$, and $N$, but
not on $\rho$.
\end{lemma}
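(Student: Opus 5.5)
The idea is to invert the asymptotic formula of Lemma~\ref{lem:liouville-asymptotics} along the half-strip $\Pi_k(w_0,\varepsilon)$ and read off the argument of the preimage. Write $s=\frac{N+2}{2}$, so $L_k(z)=\ell(z)(1+\eta(z))$ with $\eta(z)=O(\varepsilon_{N,D}(|z|))$.

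First, the geometry. Since $w_0\in\Omega_k(\rho,\delta)$, the half-strip $\Pi_k(w_0,\varepsilon)$ starts at $w_0$ and runs in the direction $(-1)^k$, i.e.\ along $\arg w=\pi k$. Every point of it has modulus $\gtrsim|w_0|\ge\rho^{s}$. Along the strip the argument differs from $\pi k$ by at most about $\bigl(|\Im w_0|+\varepsilon\bigr)/|w|$. This is controlled by $\delta$ near $w_0$ and decays like $1/|w|$ further out. Hence, for $\rho>\rho_0(\delta)$, the whole strip lies in $\Omega_k(\rho',\delta')$ for slightly smaller parameters. The right-hand inclusion of Lemma~\ref{lem:liouville-asymptotics} then puts $L_k^{-1}(\Pi_k)$ inside $G_k(c\rho,\delta')$. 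There the asymptotics of $L_k$ hold, and $|z|^{s}\asymp|w|$ uniformly.

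Second, the inversion. For $z=L_k^{-1}(w)$ with $w\in\Pi_k$, write $\ell(z)=w(1+\eta)^{-1}$ and take the branch of the power $1/s$ fixed by the sector:
\[
\arg z=\frac{1}{s}\bigl(\arg w-\arg(1+\eta(z))\bigr)
=\frac{\pi k}{s}+\frac{1}{s}\bigl(\arg w-\pi k\bigr)+O(|\eta(z)|).
\]
Note that $\pi k/s=\theta_k$. The last term is $O(\varepsilon_{N,D}(|z|))$. The middle term is at most $C'(|\Im w_0|+\varepsilon)/|w|\asymp |z|^{-s}$. So the claim reduces to comparing $|z|^{-(N+2)/2}$ with $\varepsilon_{N,D}(|z|)$:
\begin{itemize}
\item For $N\ge3$ we have $s\ge5/2>2$, so $|z|^{-s}\le |z|^{-2}$.
\item For $N=2$, $|z|^{-2}\le(\log|z|)|z|^{-2}$.
\item For $N=1$, $s=3/2$, which matches $r^{-3/2}$ exactly.
\item For $N=0$, $s=1$, and $|z|^{-1}\le(1+|D|\log|z|)|z|^{-1}$.
\end{itemize}
So $|\arg z-\theta_k|\le C\varepsilon_{N,D}(|z|)$, where $C$ depends on $|\Im w_0|\le|w_0|$, $\varepsilon$, $N$, and the implied constants in Lemma~\ref{lem:liouville-asymptotics}, and hence on $\delta$. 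The constant $C$ does not depend on $\rho$, because the bounds are uniform once $|z|$ exceeds a threshold depending only on $\delta$. Together with $|z|>c\rho$ this gives $L_k^{-1}(\Pi_k(w_0,\varepsilon))\subset\Lambda_k(c\rho,C)$.

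The main obstacle is the first step, namely making sure the whole half-strip stays inside the region where $L_k^{-1}$ is defined and the asymptotics hold. The difficulty is near $w_0$, where $\arg w$ may be far from $\pi k$, up to $\pi-s\delta$. My first attempt would be to split the strip into two parts. The bounded initial portion $|w|\le K|w_0|$ lies in a compact part of the sector, where $|\arg z-\theta_k|$ is bounded while $\varepsilon_{N,D}(|z|)\gtrsim |z|^{-s}\gtrsim (K|w_0|)^{-1}$. That portion is therefore absorbed by enlarging $C$ depending on $|w_0|$, which is exactly why the statement allows $C$ to depend on $|w_0|$. The tail $|w|>K|w_0|$ has $|\arg w-\pi k|\le 2(|\Im w_0|+\varepsilon)/|w|$ small, and there the computation above applies.
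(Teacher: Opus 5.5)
Your argument is correct and is essentially the one the paper intends. The paper gives no proof beyond citing Lemma~2.4 of Gundersen--Heittokangas--Zemirni for a horizontal ray and asserting that the half-strip case ``follows by the same argument'', namely by inverting $L_k=\ell\,(1+O(\varepsilon_{N,D}))$ to get $|\arg z-\theta_k|\le C'\bigl(|z|^{-(N+2)/2}+\varepsilon_{N,D}(|z|)\bigr)$ on the tail, with the bounded initial piece absorbed into the $|w_0|$-dependence of $C$, exactly as you do.

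One point should be made explicit. Your first step, that the whole strip lies in a slightly smaller $\Omega_k$ with modulus at least $c(\delta)|w_0|$, needs $\varepsilon$ small compared with $\rho^{(N+2)/2}\sin\bigl(\tfrac{N+2}{2}\delta\bigr)$. So $\rho_0$ should also be allowed to depend on $\varepsilon$; this is harmless, since $\varepsilon$ is a fixed small constant in the application.
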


The corresponding result in
\cite[Lem.~2.4]{Gunder_Heittokangas_Asymp_Int_Theory} is formulated for the
preimage of a horizontal ray. The half-strip version used here follows by the
same argument.

\subsection{Asymptotic integration in Stokes regions}

After the Liouville transformation, the problem reduces to a perturbation of
the equation $U''+U=0$. We record the corresponding asymptotic integration
in a form convenient for later use along horizontal rays.

For $r,\delta>0$, set
\[
D^{\pm}(r,\delta)
=
\bigl\{
w\in\mathbb C:\ |w|>r,\ |\arg(\pm w)|<\pi-\delta
\bigr\}.
\]
Let $0<\delta_0<\delta<\pi$, $r_0>0$, and let $F$ be analytic in
$D^{+}(r_0,\delta_0)$. We say that $F$ satisfies condition
$\mathcal F^{+}$ if, for every $w_0\in D^{+}(r_0,\delta_0)$, the integral
\[
\int\limits_0^{+\infty}|F(w_0+t)|\,dt
\]
converges and
\[
\sup_{w_0\in D^{+}(r,\delta)}
\int\limits_0^{+\infty}|F(w_0+t)|\,dt
\longrightarrow0,
\qquad r\to\infty.
\]
The condition $\mathcal F^{-}$ in $D^{-}(r_0,\delta_0)$ is defined
analogously, with integration along the negative real direction. In
particular, $\mathcal F^\pm$ holds whenever
\[
F(w)=O(w^{-2}),\qquad w\to\infty,
\]
in the corresponding domain $D^\pm(r_0,\delta_0)$.

The following results are standard consequences of the Volterra form of
Hille's asymptotic integration theory; see Hille \cite[Ch.~11]{Hille_ODE_complex}
and Gundersen--Heittokangas--Zemirni
\cite{Gunder_Heittokangas_Asymp_Int_Theory}.

\begin{theorem}[{\cite[Thm.~3.1]{Gunder_Heittokangas_Asymp_Int_Theory}}]\label{thm:volterra-solutions}
Let $F$ satisfy $\mathcal F^{+}$ in $D^{+}(r_0,\delta_0)$. Then
there exists $r_1>0$ such that in $D^{+}(r_1,\delta)$ the equation
\begin{equation}\label{eq:volterra-ode}
f''+(1-F)f=0
\end{equation}
has unique analytic solutions $E^\pm$ satisfying
\[
E^\pm(w)=
e^{\pm iw}
+\int\limits_0^{+\infty}\sin t\,F(w+t)\,E^\pm(w+ t)\,dt.
\]
Moreover, every solution of \eqref{eq:volterra-ode} in
$D^{+}(r_1,\delta)$ is of the form
\[
f=\lambda^+E^+ + \lambda^-E^-,
\qquad
\lambda^\pm\in\mathbb C.
\]
\end{theorem}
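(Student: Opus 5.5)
The plan is to convert the differential equation into a Volterra integral equation and solve it by successive approximations on a suitable function space, treating $E^+$ and $E^-$ in the same way.

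\emph{Step 1: the integral equation.} Write $f=E^\pm$ and differentiate the identity
\[
f(w)=e^{\pm iw}+\int\limits_0^{+\infty}\sin t\,F(w+t)f(w+t)\,dt
\]
twice under the integral sign, after the substitution $s=w+t$ along the horizontal ray. The kernel $\sin(s-w)$ satisfies $\partial_w^2\sin(s-w)=-\sin(s-w)$ and $\partial_w\sin(s-w)|_{s=w}=-1$. This gives $f''+f=Ff$, which is \eqref{eq:volterra-ode}. Hence any bounded analytic solution of the integral equation solves the differential equation.

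\emph{Step 2: existence and uniqueness.} Existence comes from Picard iteration $f_0=e^{\pm iw}$, $f_{j+1}=e^{\pm iw}+K f_j$, where $K$ is the integral operator on the right-hand side. The main obstacle is the growth of $e^{\pm iw}$ and $\sin t$ in the complex direction. On the horizontal ray $w+t$, $t\ge0$, we have $\Im(w+t)=\Im w$, so $|e^{\pm i(w+t)}|=|e^{\pm iw}|$, and $\sin t$ with real $t$ is bounded by $1$. I will therefore work with $h=f e^{\mp iw}$, which satisfies
\[
h(w)=1+\int\limits_0^{+\infty}\sin t\,e^{\pm it}F(w+t)h(w+t)\,dt .
\]
This kernel is bounded by $|F(w+t)|$. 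By condition $\mathcal F^+$, choose $r_1$ with $\sup_{w\in D^+(r_1,\delta)}\int_0^\infty|F(w+t)|\,dt\le 1/2$. I also need the whole ray $w+t$ to stay in $D^+(r_1,\delta)$ for $w$ in that domain, which holds geometrically since moving right keeps the point in $D^+$ (possibly after slightly enlarging $r_1$ or passing from $\delta$ to $\delta_0$). Then $K$ is a contraction with norm $\le1/2$ on bounded functions in $D^+(r_1,\delta)$. The iterates converge uniformly, the limit is analytic by locally uniform convergence, and it is the unique bounded solution. This gives the stated uniqueness: $E^\pm$ is unique among solutions of the integral equation with $E^\pm e^{\mp iw}$ bounded.

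\emph{Step 3: every solution is a combination.} The estimate $|h-1|\le \sup_{w}\int_0^\infty|F(w+t)|\,dt\cdot\|h\|\to0$ gives $E^\pm=e^{\pm iw}(1+o(1))$ as $\Re w\to+\infty$ along rays. A similar differentiated integral gives $(E^\pm)'=\pm i e^{\pm iw}(1+o(1))$. The Wronskian $W(E^+,E^-)$ is constant, since the equation has no first-order term. Evaluating it asymptotically gives $W=-2i\neq0$, so $E^\pm$ are linearly independent. Since $D^+(r_1,\delta)$ is simply connected and the solution space is two-dimensional, every analytic solution is $\lambda^+E^++\lambda^-E^-$.
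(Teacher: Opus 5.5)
Your proposal is correct and is essentially the standard Hille-type argument (Volterra equation, the substitution $h=fe^{\mp iw}$, successive approximations, and a Wronskian computation) by which the cited \cite[Thm.~3.1]{Gunder_Heittokangas_Asymp_Int_Theory} is proved; the paper itself gives no proof beyond that citation. The one detail to repair is the geometry in Step~2: for $\delta<\pi/2$ the set $D^+(r_1,\delta)$ is not invariant under $w\mapsto w+t$, and neither enlarging $r_1$ nor passing to $\delta_0$ changes this, because the ray from a point near the edge of the sector passes at distance only $|\Im w|\ge|w|\sin\delta$ from the origin; instead, run the contraction on the hull $D^+(r_1,\delta)+[0,\infty)\subset D^+(r_1\sin\delta,\delta)$ with $r_1\sin\delta\ge r_0$, where the supremum of $\int_0^{\infty}|F(w+t)|\,dt$ equals the one over $D^+(r_1,\delta)$.
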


Here and below the Volterra results are stated in $D^+$; the statements in
$D^-$ are identical after the corresponding change of signs. In either
domain, $E^+$ and $E^-$ denote the two canonical solutions asymptotic to
$e^{iw}$ and $e^{-iw}$, respectively.

We shall also use the corresponding asymptotic form of these solutions. The
asymptotic formula for $E^\pm$ and the assertion on zeros are
\cite[Cor.~3.1]{Gunder_Heittokangas_Asymp_Int_Theory}; the estimate for the
derivative follows from the same Volterra integral equations and Gronwall's
lemma.

\begin{corollary}\label{cor:volterra-asymptotics}
As $w\to\infty$ in $D^{+}(r_1,\delta)$,
\begin{align*}
E^\pm(w)
&=
e^{\pm iw}
\left(
1+O\!\left(\int\limits_{0}^{+\infty}|F(w+t)|\,dt\right)
\right),\\[1ex]
(E^\pm)'(w)
&=
\pm i\,e^{\pm iw}
\left(
1+O\!\left(\int\limits_{0}^{+\infty}|F(w+t)|\,dt\right)
\right).
\end{align*}
In particular, $E^\pm$ has only finitely many zeros in
$D^{+}(r_1,\delta)$ for sufficiently large $r_1$.
\end{corollary}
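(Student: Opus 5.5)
We prove the three assertions of Corollary~\ref{cor:volterra-asymptotics} in turn, working directly with the Volterra equations of Theorem~\ref{thm:volterra-solutions}. Throughout we write
\[
\eta(w)=\int\limits_0^{+\infty}|F(w+t)|\,dt .
\]
By condition $\mathcal F^{+}$, $\eta(w)\to0$ uniformly as $|w|\to\infty$ in $D^{+}(r,\delta)$.

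\emph{Step 1: asymptotics of $E^\pm$.} Put $h^\pm(w)=e^{\mp iw}E^\pm(w)$. Multiplying the integral equation by $e^{\mp iw}$ gives
\[
h^\pm(w)=1+\int\limits_0^{+\infty}\sin t\,e^{\pm it}F(w+t)\,h^\pm(w+t)\,dt .
\]
The key observation is that the ray $\{w+t:\ t\ge0\}$ stays inside $D^{+}(r_1,\delta)$ once $r_1$ is large. The factor $\sin t\,e^{\pm it}=(e^{\pm 2it}-1)/(\pm 2i)$ is bounded for real $t$. Hence
\[
|h^\pm(w)|\le 1+\int\limits_0^{+\infty}|F(w+t)|\,|h^\pm(w+t)|\,dt .
\]
We want to iterate this along the ray. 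The natural quantity is $\phi(s)=\sup_{\tau\ge s}|h^\pm(w+\tau)|$, and the Gronwall-type lemma for backward integrals gives
\[
|h^\pm(w)|\le \exp\bigl(\eta(w)\bigr)\le C .
\]
One point needs care: a priori we do not know that $\sup_{\tau\ge s}|h^\pm(w+\tau)|$ is finite. I would avoid this by using the Picard iteration that constructs $E^\pm$ in the first place, where $\|h_{n+1}-h_n\|\le \eta^{n+1}/(n+1)!$. This gives boundedness of $h^\pm$ directly. Substituting the bound back into the integral equation yields
\[
|h^\pm(w)-1|\le C\,\eta(w),
\]
which is the first formula.

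\emph{Step 2: the derivative.} Differentiate the equation $E^\pm(w)=e^{\pm iw}+\int_0^{\infty}\sin t\,F(w+t)E^\pm(w+t)\,dt$ in $w$. Direct differentiation under the integral would involve $F'$, which we do not control. Instead, change variables to $s=w+t$ along the ray:
\[
E^\pm(w)=e^{\pm iw}+\int\limits_w^{w+\infty}\sin(s-w)\,F(s)\,E^\pm(s)\,ds .
\]
Differentiating now only hits the kernel and the lower limit. The boundary term vanishes because $\sin 0=0$. This gives
\[
(E^\pm)'(w)=\pm i\,e^{\pm iw}-\int\limits_0^{+\infty}\cos t\,F(w+t)\,E^\pm(w+t)\,dt .
\]
Insert $|E^\pm(w+t)|\le C|e^{\pm i(w+t)}|$ from Step 1. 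Since $t$ is real, $|e^{\pm i(w+t)}|=|e^{\pm iw}|$. The integral is therefore bounded by $C|e^{\pm iw}|\,\eta(w)$, which gives the derivative formula.

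\emph{Step 3: zeros.} Choose $r_1$ so large that $C\eta(w)<1/2$ throughout $D^{+}(r_1,\delta)$. Then $|h^\pm(w)-1|<1/2$, so $h^\pm$ does not vanish there. Since $e^{\pm iw}$ is zero-free, $E^\pm$ has no zeros at all in $D^{+}(r_1,\delta)$, and in particular only finitely many.

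The main obstacle is the a priori boundedness needed for Gronwall in Step 1. I expect it to be handled by the Picard-series construction of $E^\pm$ and the uniformity built into $\mathcal F^{+}$. The remaining steps are routine.
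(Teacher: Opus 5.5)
Your proposal is correct and follows essentially the argument the paper relies on. The asymptotics of $E^\pm$ and the zero statement come from the Volterra/Picard--Gronwall argument behind \cite[Cor.~3.1]{Gunder_Heittokangas_Asymp_Int_Theory}. The derivative estimate comes, as the paper indicates, from the same integral equation; your device of shifting $w$ in the real direction so that only the kernel is differentiated (giving the $\cos t$ formula without ever touching $F'$) is exactly what is needed there.

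One cosmetic correction: the ray $w+[0,\infty)$ need not stay in $D^{+}(r_1,\delta)$, since for $\Re w<0$ its modulus drops to $|\Im w|$, which can be less than $r_1$. It does stay in the larger region where $F$ is defined and the integral equation is posed. Since $\int_t^{\infty}|F(w+s)|\,ds\le\eta(w)$ along that ray, this is all your argument uses.
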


For $\varepsilon>0$ and $w_0\in\mathbb C$, let
\[
Q_\varepsilon(w_0)=
\{\,w\in\mathbb C:\ |\Re w-\Re w_0|<\varepsilon,\
|\Im w-\Im w_0|<\varepsilon\,\}.
\]

The following statement is \cite[Lem.~3.2]{Gunder_Heittokangas_Asymp_Int_Theory},
with the derivative asymptotic added.

\begin{theorem}\label{thm:volterra-zero-pattern}
Let $\varepsilon>0$ be sufficiently small, and let
\[
f=\lambda^+E^+ + \lambda^-E^-
\]
be a nontrivial solution of \eqref{eq:volterra-ode} in
$D^{+}(r_1,\delta)$.

If $\lambda^+\lambda^-\neq0$, then there exist
$w_0\in D^{+}(r_1,\delta)$, $n_0\in\mathbb N$, and $\eta\in\mathbb C$
such that all but finitely many zeros of $f$ lie in the squares
\[
Q_\varepsilon(w_0)+\pi n,
\qquad n\ge n_0,
\]
each of these squares contains exactly one zero $\eta_n$, and
\[
\eta_n=\pi n+\eta+o(1),
\qquad n\to\infty.
\]
Moreover,
\[
\lim_{n\to\infty}(-1)^n f'(\eta_n)=2i\,\lambda^+e^{i\eta}.
\]

If $\lambda^+\lambda^-=0$, then $f$ has only finitely many zeros in
$D^{+}(r_1,\delta)$.
\end{theorem}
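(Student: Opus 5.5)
Fix $\varepsilon$ small. Every solution has the form $f=\lambda^+E^++\lambda^-E^-$ by Theorem~\ref{thm:volterra-solutions}, and I will read off its zeros from Corollary~\ref{cor:volterra-asymptotics}. Write
\[
\phi(w)=\int_0^{+\infty}|F(w+t)|\,dt ,
\]
so that $\phi(w)\to0$ as $w\to\infty$ in $D^{+}(r_1,\delta)$ by condition $\mathcal F^{+}$.

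\emph{Degenerate case.} If $\lambda^+\lambda^-=0$, then $f$ is a nonzero multiple of a single $E^\pm$. Corollary~\ref{cor:volterra-asymptotics} shows $E^\pm$ has only finitely many zeros in $D^{+}(r_1,\delta)$, which disposes of this case immediately.

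\emph{Nondegenerate case.} Assume $\lambda^+\lambda^-\neq0$. A zero of $f$ is a solution of
\[
e^{2iw}\,\frac{1+\epsilon^+(w)}{1+\epsilon^-(w)}=-\frac{\lambda^-}{\lambda^+},
\qquad \epsilon^\pm=O(\phi).
\]
For large $|w|$ the left side is $e^{2iw}(1+o(1))$.

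First, localize the zeros to a horizontal strip. If $\Im w\to+\infty$ then $|e^{2iw}|\to0$, and if $\Im w\to-\infty$ then $|e^{2iw}|\to\infty$. So zeros must satisfy $|\Im w-\Im\eta|\le C$ for a bounded set of $\Im w$.

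Next, pin them down exactly. Choose $\eta$ with $e^{2i\eta}=-\lambda^-/\lambda^+$. This determines $\eta$ modulo $\pi$, and I fix the representative so that the half-strip to the right lies in $D^{+}(r_1,\delta)$; this is possible because $D^+$ contains all points $w$ with $\Re w$ large and $\Im w$ bounded. The unperturbed function $e^{2iw}+\lambda^-/\lambda^+$ has simple zeros exactly at $\eta+\pi n$. On the boundary of each square $Q_\varepsilon(\eta)+\pi n$ it is bounded below by a constant $c(\varepsilon)>0$, uniformly in $n$ by periodicity. The perturbation has size $o(1)$ there as $n\to\infty$. Rouché's theorem then gives exactly one zero $\eta_n$ in each square for $n\ge n_0$. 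Shrinking $\varepsilon$ along a sequence gives $\eta_n=\pi n+\eta+o(1)$. Setting $w_0=\eta$ yields the stated squares $Q_\varepsilon(w_0)+\pi n$.

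To exclude all other zeros, I would show that $|e^{2iw}+\lambda^-/\lambda^+|$ is bounded below on the strip minus the union of squares, by periodicity and compactness of a fundamental domain. Together with the strip localization, Rouché (or a direct $o(1)$ comparison) leaves only finitely many zeros outside the squares, namely those at bounded $|w|$ where the asymptotics are not yet effective. I expect this step to be the main obstacle. Zeros far out in the $w$-plane but not near the positive real direction, for example near the boundary rays $\arg w=\pm(\pi-\delta)$, must be handled. There $|\Im w|\to\infty$ whenever $|w|\to\infty$ along directions away from the real axis. Since zeros require bounded $\Im w$ and $\Re w\to-\infty$ is impossible within $D^+$ at bounded height, this should close.

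\emph{Derivative at the zeros.} Using Corollary~\ref{cor:volterra-asymptotics},
\[
f'(\eta_n)=i\lambda^+e^{i\eta_n}(1+o(1))-i\lambda^-e^{-i\eta_n}(1+o(1)).
\]
At $\eta_n$ we have $\lambda^-e^{-i\eta_n}=-\lambda^+e^{i\eta_n}(1+o(1))$, hence
\[
f'(\eta_n)=2i\lambda^+e^{i\eta_n}(1+o(1)).
\]
Since $e^{i\eta_n}=e^{i\pi n}e^{i\eta}e^{io(1)}=(-1)^ne^{i\eta}(1+o(1))$, it follows that $(-1)^nf'(\eta_n)\to2i\lambda^+e^{i\eta}$.
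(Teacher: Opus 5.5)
Your proof is correct. For the derivative formula it coincides with the paper's argument: at a zero, $\lambda^-e^{-i\eta_n}=-\lambda^+e^{i\eta_n}(1+o(1))$, so the two terms of $f'(\eta_n)$ combine to $2i\lambda^+e^{i\eta_n}(1+o(1))$, and $e^{i\eta_n}=(-1)^ne^{i\eta}(1+o(1))$.

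The location and counting of zeros is where you differ. The paper gives no argument there and simply quotes \cite[Lem.~3.2]{Gunder_Heittokangas_Asymp_Int_Theory}. You derive it from Corollary~\ref{cor:volterra-asymptotics} alone, in three steps:
\begin{enumerate}
\item You compare $e^{iw}f/\lambda^+$ with the $\pi$-periodic function $e^{2iw}+\lambda^-/\lambda^+$.
\item You use $|e^{2iw}|=|\lambda^-/\lambda^+|(1+o(1))$ at zeros to force $\Im w\to\Im\eta$. Inside $|\arg w|<\pi-\delta$ this gives $\Re w\to+\infty$, which correctly disposes of the boundary rays you were worried about.
\item You apply Rouch\'e on the squares, together with a periodicity lower bound off the squares.
\end{enumerate}
This makes the proof self-contained. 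It also shows explicitly that $\eta$ is fixed mod $\pi$ by $e^{2i\eta}=-\lambda^-/\lambda^+$, and that only the uniform smallness of $\int_0^{\infty}|F(w+t)|\,dt$ is needed. The paper's route is shorter but leaves these facts inside the reference.

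The one point to tighten is your claim that only finitely many zeros remain ``at bounded $|w|$''. A bounded part of the open set $D^{+}(r_1,\delta)$ is not compact, so zeros could a priori accumulate on $|w|=r_1$ or on the rays $\arg w=\pm(\pi-\delta)$. To rule this out, take $r_1>r_0$ and recall $\delta>\delta_0$. Then $f$ solves a linear equation whose coefficient is analytic in the simply connected domain $D^{+}(r_0,\delta_0)$. Hence $f$ continues analytically to that domain, which contains the closure of $D^{+}(r_1,\delta)$. Each bounded piece of that closure is compact there, so it contains only finitely many zeros.
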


\begin{proof}
The assertions on the location and number of zeros are precisely
\cite[Lem.~3.2]{Gunder_Heittokangas_Asymp_Int_Theory}. It remains only to
derive the displayed asymptotic for $f'(\eta_n)$.

At a zero $\eta_n$ we have
\[
\lambda^-E^-(\eta_n)=-\lambda^+E^+(\eta_n).
\]
By Corollary~\ref{cor:volterra-asymptotics},
\[
(E^\pm)'(\eta_n)=\pm i E^\pm(\eta_n)+o(1),
\]
and hence
\begin{align*}
f'(\eta_n)
&=\lambda^+(E^+)'(\eta_n)+\lambda^-(E^-)'(\eta_n)\\
&=i\lambda^+E^+(\eta_n)-i\lambda^-E^-(\eta_n)+o(1)\\
&=2i\,\lambda^+E^+(\eta_n)+o(1).
\end{align*}
Since
\[
E^+(\eta_n)=e^{i\eta_n}(1+o(1))
\]
and
\[
\eta_n=\pi n+\eta+o(1),
\]
we obtain
\[
E^+(\eta_n)=(-1)^n e^{i\eta}(1+o(1)).
\]
Therefore
\[
f'(\eta_n)=(-1)^n2i\,\lambda^+e^{i\eta}(1+o(1)),
\]
which proves the asserted limit.
\end{proof}

\subsection{Zeros of entire solutions}

We now return to the original variable $z$. The results of the previous
subsections give the asymptotic behaviour of solutions of
\begin{equation}\label{eq:osnovnoe}
g''-\frac{P'}{P}g'+\frac{Q}{P}g=0,
\end{equation}
where $P$ and $Q$ are polynomials. We write
\[
P(z)=z^p+a z^{p-1}+O(z^{p-2}),
\qquad
Q(z)=b z^q+c z^{q-1}+O(z^{q-2}),
\qquad b\neq0,
\]
with $q\ge p\ge0$, and put
\[
N=q-p.
\]
The polynomial part of $Q/P$ is denoted by $S$; thus, if
$p_1,\dots,p_p$ are the zeros of $P$, counted with multiplicities, then
\[
\frac{Q(z)}{P(z)}
=
S(z)+\sum_{j=1}^{p}\frac{D_j}{z-p_j},
\]
where
\[
S(z)=b z^N+(c-ab)z^{N-1}+O(z^{N-2}).
\]
The normal form of \eqref{eq:osnovnoe} involves the potential
\[
R
=
\frac12\left(
\frac{P''}{P}
-\frac32\left(\frac{P'}{P}\right)^2
\right)
+\frac{Q}{P}.
\]

The Stokes geometry is determined by the leading term of $R$ at infinity.
For $N\ge1$, we use the shift
\[
d=\frac{ab-c}{bN},
\]
while for $N=0$ we put $d=0$. The Stokes directions are
\[
\theta_k=\frac{2\pi k}{N+2}-\frac{\arg b}{N+2},
\qquad k=0,\dots,N+1,
\]
if $N\ge1$, and
\[
\theta_k=\pi k-\frac{\arg b}{2},
\qquad k=0,1,
\]
if $N=0$. The corresponding Stokes rays, issuing from the point $d$, are
\[
\Gamma_k=\{\,z\in\mathbb C:\ \arg (z-d)=\theta_k\,\}.
\]
We write
\[
S_k=\{\,z\in\mathbb C:\ \arg (z-d)\in(\theta_k,\theta_{k+1})\,\}
\]
for the Stokes sectors. For $r>0$ and $\delta>0$, set
\[
G_k(r,\delta)=
\left\{
z\in\mathbb C:\ |z-d|>r,\ 
\arg (z-d)\in(\theta_{k-1}+\delta,\theta_{k+1}-\delta)
\right\}.
\]
Finally, for $r>0$ and $C>0$, put
\[
\Lambda_k(r,C)=
\left\{
z\in\mathbb C:\ |z-d|>r,\ 
|\arg (z-d)-\theta_k|
<C\,\varepsilon_{N,D}(|z-d|)
\right\}.
\]
Here, in the case $N=0$, $D$ denotes the coefficient of the $1/z$-term
in the corresponding normalized expansion
\[
R(z)=1+\frac{D}{z}+O(z^{-2})
\]
after the affine normalization used in the Liouville transformation.

The next theorem is obtained from the sectorial zero localization theorem of
Gundersen--Heittokangas--Zemirni
\cite[Thm.~1]{Gunder_Heittokangas_Asymp_Int_Theory}, after reducing
\eqref{eq:osnovnoe} to normal form. The extension needed here is to
rational-coefficient equations arising from the pair $(P,Q)$. It also
exhibits the borderline phenomenon $N=0$: in this case the zeros are
localized only in logarithmic neighbourhoods of the Stokes rays, and need not
approach them in the Euclidean metric.

\begin{theorem}\label{thm:zeros-stokes-localization}
Let $P$ and $Q$ be polynomials satisfying the assumptions and notation
introduced above, and let $g$ be a nontrivial entire solution of
\eqref{eq:osnovnoe}. Then, for each $k=0,\dots,N+1$, there exist
$r>0$, $\delta>0$, and a fundamental pair of analytic solutions
$g_k^+,g_k^-$ of \eqref{eq:osnovnoe} in $G_k(r,\delta)$ such that
\[
g=\lambda_k^+g_k^+ + \lambda_k^-g_k^-
\]
there, with some constants $\lambda_k^+,\lambda_k^-\in\mathbb C$. Moreover,
as $z\to\infty$, $z\in G_k(r,\delta)$,
\begin{align*}
g_k^\pm(z)
&=
(z-d)^{\frac{3p-q}{4}}
\exp\!\left(
\frac{\pm 2i\sqrt b}{N+2}\,
(z-d)^{\frac{N+2}{2}}(1+o(1))
\right)
\bigl(1+O((z-d)^{-1})\bigr),\\[1ex]
(g_k^\pm)'(z)
&=
\pm i\sqrt b\,(z-d)^{\frac{p+q}{4}}
\exp\!\left(
\frac{\pm 2i\sqrt b}{N+2}\,
(z-d)^{\frac{N+2}{2}}(1+o(1))
\right)
\bigl(1+O((z-d)^{-1})\bigr).
\end{align*}

The zero distribution is determined by the two coefficients:
$g$ has infinitely many zeros in $G_k(r,\delta)$ if and only if
\[
\lambda_k^+\lambda_k^-\neq0.
\]
In this case all but finitely many zeros $t_n$ of $g$ in $G_k(r,\delta)$
belong to $\Lambda_k(r,C)$ for some $C>0$, and
\[
t_n-d\sim
\left(\frac{\pi^2(N+2)^2}{4b}\right)^{\!\frac1{N+2}}
e^{\frac{2\pi i k}{N+2}}\,n^{\frac{2}{N+2}},
\qquad n\to\infty,
\]
while
\[
g'(t_n)\sim (-1)^n a_k\,n^{\frac{p+q}{2(N+2)}},
\qquad n\to\infty,
\]
where $a_k\neq0$ depends on $k$ and on the coefficients
$\lambda_k^\pm$. Furthermore,
\[
\mathrm{dist}(t_n,\Gamma_k)=
\begin{cases}
O(|t_n|^{-1}), & N\ge3,\\[0.8ex]
O\!\left(\dfrac{\log|t_n|}{|t_n|}\right), & N=2,\\[1.2ex]
O(|t_n|^{-1/2}), & N=1,\\[0.8ex]
O(|D|\log|t_n|+1), & N=0.
\end{cases}
\]
In particular, for $N>0$ the zeros approach the Stokes ray $\Gamma_k$ in
the Euclidean metric, whereas for $N=0$ one can guarantee in general only
logarithmic localization.
\end{theorem}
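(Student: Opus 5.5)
We argue by transporting the Volterra analysis of the $w$-plane back to the $z$-plane through the chain of substitutions already set up. Fix $k$. By Corollary~\ref{cor:normalized-equation}, $u(z)=g(\mu z+\beta)/\sqrt{P(\mu z+\beta)}$ solves $u''+Ru=0$ with $R\in\mathcal R$; here $\beta=d$ and $\mu=b^{-1/(N+2)}$, and this rotation by $-\arg b/(N+2)$ accounts for the shifted Stokes directions $\theta_k$. A branch of $\sqrt P$ exists in the sector $G_k(r,\delta)$ for $r$ large, since all zeros of $P$ lie in a disc. By Lemma~\ref{lem:liouville-asymptotics} and Lemma~\ref{lem:liouville-transform}, $U(w)=R(z)^{1/4}u(z)$ with $w=L_k(z)$ solves $U''+(1-T)U=0$ on $\Omega_k(cr,\delta)$, where $T=O(w^{-2})$, so condition $\mathcal F^{\pm}$ holds. (For $k$ odd we use $D^-$ in place of $D^+$.) Theorem~\ref{thm:volterra-solutions} then gives the canonical pair $E^\pm$, and we write $U=\lambda_k^+E^++\lambda_k^-E^-$. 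We define
\[
g_k^\pm(z)=\sqrt{P(z)}\,R(\zeta)^{-1/4}E^\pm\bigl(L_k(\zeta)\bigr),\qquad \zeta=(z-d)/\mu .
\]
These are solutions of \eqref{eq:osnovnoe} in $G_k(r,\delta)$. They form a fundamental pair because their Wronskian is a nonzero multiple of $W(E^+,E^-)=-2i$ times the Jacobian factors, so $g=\lambda_k^+g_k^++\lambda_k^-g_k^-$.

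For the asymptotics we combine three facts. First, $\sqrt P\sim (z-d)^{p/2}$. Second, $R(\zeta)^{-1/4}\sim$ const$\cdot(z-d)^{-N/4}$, and $p/2-N/4=(3p-q)/4$. Third, Corollary~\ref{cor:volterra-asymptotics} combined with $L_k(\zeta)=\ell(\zeta)(1+o(1))$ gives $E^\pm(L_k)=\exp(\pm i\ell(\zeta)(1+o(1)))$, and $\ell((z-d)/\mu)=\tfrac{2\sqrt b}{N+2}(z-d)^{(N+2)/2}$ because $\mu^{-(N+2)/2}=\sqrt b$. For the derivative, the chain rule gives $\frac{d}{dz}E^\pm(L_k)=\mu^{-1}\sqrt{R}\,(E^\pm)'$. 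The derivative estimate in Corollary~\ref{cor:volterra-asymptotics} then yields the factor $\pm i\sqrt b\,(z-d)^{N/2}$. Differentiating the prefactor $\sqrt P R^{-1/4}$ only produces a relative $O((z-d)^{-1})$ correction compared to this, up to exponentially small factors. Finally $(3p-q)/4+N/2=(p+q)/4$. Constant normalizations are absorbed into the $1+o(1)$ inside the exponent, or alternatively into the choice of $g_k^\pm$.

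Zeros of $g$ in the sector correspond bijectively, via the univalent map $L_k$, to zeros of $U$, since $\sqrt P\,R^{-1/4}$ has no zeros there. Theorem~\ref{thm:volterra-zero-pattern} therefore gives the dichotomy on $\lambda_k^+\lambda_k^-$. When both coefficients are nonzero, the zeros are $\eta_n=\pi n+\eta+o(1)$ (with the sign $(-1)^k$ for odd $k$). They lie in a half-strip $\Pi_k(w_0,\varepsilon)$, and Lemma~\ref{lem:preimage-horizontal-ray} places their preimages in $\Lambda_k(r,C)$. Inverting $\ell(\zeta_n)\sim\pi n$ gives $\zeta_n\sim(\pi n(N+2)/2)^{2/(N+2)}e^{2\pi ik/(N+2)}$, hence
\[
t_n-d=\mu\zeta_n\sim\left(\frac{\pi^2(N+2)^2}{4b}\right)^{1/(N+2)}e^{2\pi ik/(N+2)}\,n^{2/(N+2)} .
\]
For $g'(t_n)$, note that at a zero of $U$ we have $g'(t_n)=\sqrt P\,R^{-1/4}\mu^{-1}\sqrt R\,U'(\eta_n)$. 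Using $U'(\eta_n)\sim(-1)^n2i\lambda_k^+e^{i\eta}$, the prefactor is $\asymp |t_n|^{(p+q)/4}\asymp n^{(p+q)/(2(N+2))}$, which gives $a_k\neq0$.

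The main obstacle is the distance estimate, which must be checked case by case. The half-strip property says $\Im L_k(\zeta_n)=O(1)$, and one has to invert $L_k=\ell(1+O(\varepsilon_{N,D}))$ near the ray. Writing $\zeta=|\zeta|e^{i(\theta_k+\phi)}$, we get $\Im\ell\approx|\zeta|^{(N+2)/2}\phi$. The error term, $\Im(\ell\cdot O(\varepsilon))$, is of size $|\zeta|^{(N+2)/2}\varepsilon_{N,D}$. Hence $\phi=O(|\zeta|^{-(N+2)/2}+\varepsilon_{N,D})$, and multiplying by $|\zeta|$ gives the distance. For $N\ge3$ this is $O(|t|^{-1})$, for $N=2$ it is $O(\log|t|/|t|)$, and for $N=1$ it is $O(|t|^{-1/2})$. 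In the case $N=0$, however, $L_k=\zeta+\tfrac D2\log\zeta+O(1)$, so $\Im L_k=O(1)$ only gives $\Im\zeta=O(|D|\log|\zeta|+1)$. This is genuinely logarithmic, since for $\arg D\ne0$ the term $\Im(\tfrac D2\log\zeta)$ drifts. To handle it we will use the explicit expansion of $\sqrt R$ with the $D/z$ term, rather than the cruder estimate in Lemma~\ref{lem:preimage-horizontal-ray}.
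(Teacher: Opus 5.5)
Your proposal is correct and follows the paper's proof essentially step for step. The shared route is: affine normalization to $u''+Ru=0$, the Liouville transform, the Volterra pair $E^\pm$ pulled back as $\sqrt{P}\,R^{-1/4}E^\pm(L_k)$, the zero dichotomy and $g'(t_n)$ via the chain rule at a zero of $U$, and the distance bounds from preimages of horizontal half-strips. Your explicit inversion of $\Im L_k=O(1)$ merely unpacks Lemma~\ref{lem:preimage-horizontal-ray}, and $N=0$ needs no separate argument: $\varepsilon_{0,D}(r)=(1+|D|\log r)r^{-1}$ already contains the logarithmic drift, so that lemma is not cruder there.
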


\begin{proof}
Fix $k\in\{0,\dots,N+1\}$, and choose $\delta>0$ sufficiently small. By
Corollary~\ref{cor:normalized-equation}, after the affine normalization and
the substitution
\[
u(z)=\frac{g(\mu z+\beta)}{\sqrt{P(\mu z+\beta)}},
\]
the equation \eqref{eq:osnovnoe} is reduced, in a sector of the form
$G_k(r,\delta)$ with $r$ sufficiently large, to
\begin{equation}\label{eq:norm_form}
u''+Ru=0,
\qquad R\in\mathcal R.
\end{equation}
Applying Lemma~\ref{lem:liouville-transform}, we pass to the Liouville
variables
\[
w=L_k(z),
\qquad
U_k(w)=u(z)R(z)^{1/4},
\]
and obtain
\[
U_k''+(1-T)U_k=0,
\qquad
T(w)=O(w^{-2}),
\]
in a domain $\Omega_k(r',\delta')$. Hence $T$ satisfies one of the
conditions $\mathcal F^\pm$, according to the parity of $k$. By
Theorem~\ref{thm:volterra-solutions},
\[
U_k=\lambda_k^+E_k^+ + \lambda_k^-E_k^- .
\]

The corresponding solutions of \eqref{eq:osnovnoe} are obtained by reversing
the Liouville transformation and the affine normalization:
\[
g_k^\pm(\mu z+\beta)
=
\sqrt{P(\mu z+\beta)}\,R(z)^{-1/4}E_k^\pm(L_k(z)).
\]
Thus
\[
g=\lambda_k^+g_k^+ + \lambda_k^-g_k^-
\]
in the corresponding sector. Corollary~\ref{cor:volterra-asymptotics} then
gives, as $z\to\infty$, $z\in G_k(r,\delta)$,
\begin{align*}
g_k^\pm(z)
&=
(z-d)^{\frac{3p-q}{4}}
\exp\!\left(
\frac{\pm 2i\sqrt b}{N+2}\,
(z-d)^{\frac{N+2}{2}}(1+o(1))
\right)
\bigl(1+O((z-d)^{-1})\bigr),\\[1ex]
(g_k^\pm)'(z)
&=
\pm i\sqrt b\,(z-d)^{\frac{p+q}{4}}
\exp\!\left(
\frac{\pm 2i\sqrt b}{N+2}\,
(z-d)^{\frac{N+2}{2}}(1+o(1))
\right)
\bigl(1+O((z-d)^{-1})\bigr).
\end{align*}

By Theorem~\ref{thm:volterra-zero-pattern}, $U_k$, and hence also $g$,
has only finitely many zeros in $G_k(r,\delta)$ if
$\lambda_k^+\lambda_k^-=0$. If $\lambda_k^+\lambda_k^-\neq0$, the same
theorem gives zeros $w_n$ of $U_k$ such that
\[
w_n=\pm\pi n+\eta_k+o(1),
\qquad
(U_k)'(w_n)\sim(-1)^n c_k,
\qquad c_k\neq0.
\]
Since $L_k(z)\sim\ell(z)$, where
\[
\ell(z)=\frac{2}{N+2}z^{\frac{N+2}{2}},
\]
Lemma~\ref{lem:liouville-asymptotics} gives, for the corresponding zeros
$z_n=L_k^{-1}(w_n)$ in the normalized variable,
\[
z_n\sim
\left(\frac{\pi^2(N+2)^2}{4}\right)^{\!\frac1{N+2}}
e^{\frac{2\pi i k}{N+2}}\,n^{\frac{2}{N+2}}.
\]
Undoing the affine normalization yields
\[
t_n-d\sim
\left(\frac{\pi^2(N+2)^2}{4b}\right)^{\!\frac1{N+2}}
e^{\frac{2\pi i k}{N+2}}\,n^{\frac{2}{N+2}}.
\]

It remains only to record the derivative at these zeros. From the inverse
Liouville transformation and the relation $u=g/\sqrt P$, differentiation at
a zero gives
\[
g'(t_n)\sim
C_k (t_n-d)^{\frac{3p-q}{4}}\,L_k'(t_n)\,(U_k)'(w_n),
\qquad C_k\neq0,
\]
with $C_k$ constant, since $U_k(w_n)=0$. Moreover,
\[
L_k'(t_n)\sim C_k'(t_n-d)^{N/2},
\qquad C_k'\neq0,\qquad N=q-p.
\]
Hence
\[
g'(t_n)
\sim
(-1)^n a_k (t_n-d)^{\frac{p+q}{4}},
\qquad a_k\neq0.
\]
Here $a_k$ is a constant depending on the sector and on the solution.
Together with the asymptotic formula for $t_n-d$, this gives
\[
g'(t_n)\sim (-1)^n a_k\,n^{\frac{p+q}{2(N+2)}},
\]
after changing the non-zero constant $a_k$.

Finally, the distance estimates follow from
Lemma~\ref{lem:preimage-horizontal-ray}: the preimage of a horizontal strip in
the $w$-plane is contained in an angular neighborhood of $\Gamma_k$ of
width $O(\varepsilon_{N,D}(|z-d|))$. This gives the stated bounds for
$N\ge3$, $N=2$, $N=1$, and $N=0$, respectively.
\end{proof}

The next corollary will be used repeatedly in the examples. We state it for
one Stokes sequence; since there are only finitely many Stokes sectors, the
same summability criterion is unchanged if all poles are taken together.

\begin{corollary}\label{cor:coefficient-summability}
In the notation of Theorem~\ref{thm:zeros-stokes-localization}, let
$\{t_n\}$ be a sequence of zeros of $g$ lying in one sector
$G_k(r,\delta)$ for which $\lambda_k^+\lambda_k^-\neq0$, and put
\[
c_n=\frac{P(t_n)}{g'(t_n)^2}.
\]
Then, for every $\alpha\in\mathbb R$,
\[
\frac{|c_n|}{|t_n|^{1+\alpha}}
\asymp
n^{-1-\frac{2\alpha}{N+2}},
\qquad n\to\infty.
\]
Consequently,
\[
\sum_{n=1}^{\infty}\frac{|c_n|}{|t_n|^{1+\alpha}}<\infty
\quad\Longleftrightarrow\quad
\alpha>0.
\]
The same equivalence holds for the union of all Stokes sequences.
\end{corollary}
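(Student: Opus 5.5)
The plan is to substitute the asymptotics of Theorem~\ref{thm:zeros-stokes-localization} directly into $c_n=P(t_n)/g'(t_n)^2$. The theorem gives
\[
t_n-d\sim B_k\,n^{\frac{2}{N+2}},\qquad B_k\neq0,
\]
so $|t_n|\asymp n^{2/(N+2)}$. Since $d$ is fixed, $|t_n|\sim|t_n-d|$, and since $P$ is monic of degree $p$, $|P(t_n)|\sim|t_n|^p\asymp n^{2p/(N+2)}$. The theorem also gives
\[
|g'(t_n)|\sim|a_k|\,n^{\frac{p+q}{2(N+2)}},\qquad a_k\neq0.
\]
Hence
\[
|g'(t_n)|^2\asymp n^{\frac{p+q}{N+2}}
\]
and
\[
|c_n|\asymp n^{\frac{2p-p-q}{N+2}}=n^{\frac{p-q}{N+2}}=n^{-\frac{N}{N+2}},
\]
using $q-p=N$. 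We also have $P(t_n)\neq0$ for large $n$: $P$ has finitely many zeros while $|t_n|\to\infty$. So $c_n$ is well defined and nonzero, and the $\asymp$ is a genuine two-sided bound.

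Dividing by $|t_n|^{1+\alpha}\asymp n^{2(1+\alpha)/(N+2)}$ gives the exponent
\[
-\frac{N+2+2\alpha}{N+2}=-1-\frac{2\alpha}{N+2},
\]
which is the claimed two-sided estimate. Comparing with $\sum n^{-s}$, the series converges if and only if $1+\frac{2\alpha}{N+2}>1$, that is, $\alpha>0$.

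For the union of Stokes sequences, only finitely many sectors $G_k(r,\delta)$ are involved, indexed by $k=0,\dots,N+1$. Each sector with $\lambda_k^+\lambda_k^-\neq0$ contributes a sequence with the same exponent $-1-2\alpha/(N+2)$. The sectors with $\lambda_k^+\lambda_k^-=0$ contain only finitely many zeros. There remains a compact region $|z-d|\le r$ near the origin, and also the zero sets in the overlaps of adjacent $G_k$. Since the overlaps are away from the Stokes rays, they contain only finitely many zeros by the localization in $\Lambda_k$. So the union differs from a finite union of such sequences by finitely many terms, each of which is counted at most a bounded number of times. Convergence therefore holds if and only if $\alpha>0$, provided at least one $k$ has $\lambda_k^+\lambda_k^-\neq0$. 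That proviso is the standing hypothesis, since some sector carries the sequence.

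The main obstacle is justifying that the asymptotic $g'(t_n)\sim(-1)^na_kn^{(p+q)/(2(N+2))}$ is genuinely two-sided with $a_k\neq0$. This rests on Theorem~\ref{thm:volterra-zero-pattern}, where $\lambda^+\neq0$ gives the nonzero limit of $(-1)^nU_k'(w_n)$, and on the nonvanishing of the Liouville factors $R^{-1/4}$, $L_k'$ and $\sqrt P$ near infinity. Everything else is bookkeeping of exponents.
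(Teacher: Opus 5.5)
Your proposal is correct and is essentially the paper's proof: you substitute $|t_n|\asymp n^{2/(N+2)}$, $|g'(t_n)|\asymp n^{(p+q)/(2(N+2))}$ and $|P(t_n)|\asymp |t_n|^p$ from Theorem~\ref{thm:zeros-stokes-localization} to get $|c_n|\asymp n^{-N/(N+2)}$, compare with $\sum n^{-s}$, and pass to the union because there are finitely many sectors and only finitely many exceptional zeros. Your extra remarks on the overlaps of adjacent sectors, on $P(t_n)\neq0$, and on $a_k\neq0$ spell out what the paper's one-line treatment of the union and of the two-sided bound leaves implicit.
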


\begin{proof}
By Theorem~\ref{thm:zeros-stokes-localization},
\[
|t_n|\asymp n^{\frac{2}{N+2}},
\qquad
|g'(t_n)|\asymp n^{\frac{p+q}{2(N+2)}}.
\]
Since $|P(t_n)|\asymp |t_n|^p$, we have
\[
|c_n|
=
\frac{|P(t_n)|}{|g'(t_n)|^2}
\asymp
n^{\frac{2p}{N+2}-\frac{p+q}{N+2}}
=
n^{-\frac{N}{N+2}}.
\]
Therefore
\[
\frac{|c_n|}{|t_n|^{1+\alpha}}
\asymp
n^{-\frac{N}{N+2}}\,
n^{-\frac{2(1+\alpha)}{N+2}}
=
n^{-1-\frac{2\alpha}{N+2}},
\]
which gives the criterion for one Stokes sequence. Since the total set of
poles is a finite union of such sequences, together with at most finitely many
exceptional poles, the convergence criterion is unchanged.
\end{proof}

In particular, the series $\sum_{n=1}^{\infty}\frac{c_n}{(z-t_n)^2}$
associated with such a solution is always locally uniformly convergent away
from the poles.

We shall say that $g$ has \emph{finitely many zeros near the Stokes ray}
$\Gamma_k$ if there exists $C>0$ such that $\Lambda_k(C)$ contains only
finitely many zeros of $g$; otherwise $g$ has infinitely many zeros
near $\Gamma_k$.

The next theorem relates the zeros in a Stokes sector to the sectorial limiting
behaviour of the corresponding solution. It follows from the results of
Hellerstein--Rossi \cite[Thm.~C]{Hellerstein_Gundersen_for_defect} and
Gundersen--Heittokangas--Zemirni
\cite[Thm.~2]{Gunder_Heittokangas_Asymp_Int_Theory}. The assertions involving
$g'$ and its limiting behaviour are obtained in the same way, using the
derivative asymptotics from the asymptotic integration results above.

\begin{theorem}\label{thm:sectorial-behavior}
Let $P$ and $Q$ be polynomials satisfying the assumptions and notation
introduced above, and let $g\not\equiv0$ be an entire solution of
\eqref{eq:osnovnoe}. Then, in each Stokes sector $S_k$ and for every
sufficiently small $\delta>0$, there exist a sign $\pm$ and a constant
$A_k\neq0$ such that, as $z\to\infty$, $z\in S_k(\delta)$,
\[
g(z)=A_k g_k^\pm(z)\bigl(1+O((z-d)^{-1})\bigr),
\qquad
g'(z)=A_k (g_k^\pm)'(z)\bigl(1+O((z-d)^{-1})\bigr).
\]
In particular, in each smaller Stokes sector either
\[
g(z)\to0,\qquad g'(z)\to0,
\]
or
\[
g(z)\to\infty,\qquad g'(z)\to\infty.
\]

For two adjacent Stokes sectors $S_k$ and $S_{k+1}$, the possible limiting
patterns for $g$ are precisely
\[
(0,\infty),\qquad (\infty,\infty),\qquad (\infty,0).
\]
Moreover, $g$ has infinitely many zeros near the common Stokes ray
$\Gamma_k$ if and only if the corresponding pattern is
$(\infty,\infty)$.

Let $k(g)$ be the number of Stokes rays near which $g$ has only finitely
many zeros. Equivalently, $k(g)$ is twice the number of Stokes sectors in
which $g$ tends to $0$ at infinity. Then $k(g)$ is even, and
\begin{align*}
n(r,0,g)
&\sim
\frac{2\sqrt{|b|}}{\pi(N+2)}
\bigl(N+2-2k(g)\bigr)r^{\frac{N+2}{2}},\\[1ex]
N(r,0,g)
&\sim
\frac{4\sqrt{|b|}}{\pi(N+2)^2}
\bigl(N+2-2k(g)\bigr)r^{\frac{N+2}{2}},\\[1ex]
T(r,g)
&\sim
\frac{4\sqrt{|b|}}{\pi(N+2)^2}
\left(N+2-\frac{k(g)}2\right)r^{\frac{N+2}{2}},
\end{align*}
as $r\to\infty$. Consequently,
\[
\delta(0,g)=\Delta(0,g)=\frac{k(g)}{2N+4-k(g)},
\qquad
\delta(w_0,g)=\Delta(w_0,g)=0
\quad (w_0\neq0).
\]
\end{theorem}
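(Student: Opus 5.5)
The plan is to derive everything from the sectorial representation $g=\lambda_k^+g_k^++\lambda_k^-g_k^-$ of Theorem~\ref{thm:zeros-stokes-localization}. The counting formulas then come from an argument-principle computation in the Liouville variable.

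\textbf{Step 1: sectorial asymptotics.} Fix a Stokes sector $S_k$ lying between $\Gamma_k$ and $\Gamma_{k+1}$. It is contained in both $G_k(r,\delta)$ and $G_{k+1}(r,\delta)$. In the Liouville variable $w=L_k(z)$, the interior of $S_k$ corresponds to a half-plane $\pm\Im w>0$ (up to the sectorial distortion of Lemma~\ref{lem:liouville-asymptotics}). There $|e^{iw}|$ and $|e^{-iw}|$ differ by the factor $e^{2|\Im w|}$, and $|\Im w|\gtrsim |z|^{(N+2)/2}$ in $S_k(\delta)$. So on $S_k(\delta)$ one of $g_k^\pm$ is exponentially dominant. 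If its coefficient is nonzero, Corollary~\ref{cor:volterra-asymptotics} gives $g=A_kg_k^\pm(1+O((z-d)^{-1}))$, and the same holds for $g'$. The correction from the recessive solution is exponentially small, hence absorbed in the $O((z-d)^{-1})$ term.

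If that coefficient is zero, then $g$ equals the recessive solution times a constant, which is nonzero since $g\not\equiv0$. The formula again holds, now with the other sign.

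The dichotomy $g\to0$ or $g\to\infty$ then follows, since $|\exp(\pm\tfrac{2i\sqrt b}{N+2}(z-d)^{(N+2)/2})|$ tends to $0$ or $\infty$ exponentially in $S_k(\delta)$. This beats the algebraic prefactors $(z-d)^{(3p-q)/4}$ and $(z-d)^{(p+q)/4}$. One consistency check is needed: on the overlap, the decompositions with respect to $G_k$ and $G_{k+1}$ must give the same behaviour. This holds because the behaviour on $S_k(\delta)$ is intrinsic to $g$.

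\textbf{Step 2: adjacent patterns and zeros.} Across $\Gamma_k$, the roles of dominant and recessive solutions swap, because $\Im w$ changes sign in $G_k$. If $\lambda_k^+\lambda_k^-\ne0$, then $g$ is dominated by $g_k^+$ on one side and by $g_k^-$ on the other, so it tends to $\infty$ on both sides. If exactly one coefficient vanishes, then $g$ is a single exponential in the whole of $G_k$. It therefore decays on one side and grows on the other, giving $(0,\infty)$ or $(\infty,0)$. The pattern $(0,0)$ is impossible: it would force $g$ to be recessive on both sides, i.e.\ proportional to $g_k^+$ and to $g_k^-$ simultaneously, which are linearly independent.

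By Theorem~\ref{thm:zeros-stokes-localization}, infinitely many zeros lie in $G_k$ iff $\lambda_k^+\lambda_k^-\neq0$, and those zeros lie in $\Lambda_k$. This gives the characterization of pattern $(\infty,\infty)$. Since the patterns never contain $(0,0)$, each sector where $g\to0$ is flanked by two rays carrying finitely many zeros, and each such ray borders exactly one decaying sector. Hence $k(g)=2\#\{\text{decaying sectors}\}$, which is even.

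\textbf{Step 3: counting.} On a ray with $\lambda_k^+\lambda_k^-\ne0$, the zeros satisfy $|t_n-d|\sim(\pi^2(N+2)^2/(4|b|))^{1/(N+2)}n^{2/(N+2)}$, and the $O(1)$ displacement of $d$ is harmless. Inverting, each such ray contributes
\[
\frac{2\sqrt{|b|}}{\pi(N+2)}\,r^{\frac{N+2}{2}}
\]
zeros in $|z|<r$. There are $N+2-k(g)$ such rays. Integrating this against $dt/t$ gives $N(r,0,g)$.

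For $T(r,g)=m(r,g)$, compute $\frac1{2\pi}\int\log^+|g(re^{i\varphi})|\,d\varphi$ from Step 1. In a growing sector, $\log|g|\sim\frac{2\sqrt{|b|}}{N+2}r^{(N+2)/2}|\sin(\tfrac{N+2}{2}(\varphi-\theta_k))|$, and in a decaying sector $\log^+|g|=o(r^{(N+2)/2})$. The $\delta$-neighbourhoods of the rays contribute $O(\delta)$, and then one lets $\delta\to0$. Each sector has angular width $2\pi/(N+2)$. The integral of $|\sin|$ over a half period gives $\frac{2}{N+2}$ in the $\varphi$-measure. Summing over the $N+2-k(g)/2$ growing sectors yields the stated $T(r,g)$.

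The deficiencies then follow from $m(r,0)=T-N(r,0)+O(1)$ and from the ratio of the asymptotic constants, which exist as genuine limits. This gives $\delta(0,g)=\Delta(0,g)$. For $w_0\ne0$, the function $g-w_0$ has the same sectorial asymptotics in growing sectors, and it tends to $-w_0$ in decaying ones. So $m(r,1/(g-w_0))$ is $o(r^{(N+2)/2})$: near the rays, zeros of $g-w_0$ can only make $g$ small on exceptional sets of small measure.

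\textbf{Main obstacle.} I expect the hardest part to be this last estimate for $w_0\ne0$, together with controlling the neighbourhoods of the Stokes rays in the $T(r,g)$ integral uniformly. I would handle both in the Liouville half-strips of Lemma~\ref{lem:preimage-horizontal-ray}, where $g-w_0$ is an explicit perturbation of $\lambda^+e^{iw}+\lambda^-e^{-iw}$ times an algebraic factor.
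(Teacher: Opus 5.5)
\textbf{Comparison with the paper.} The paper gives no argument of its own here. It quotes Hellerstein--Rossi (Thm.~C) and Gundersen--Heittokangas--Zemirni (Thm.~2), which concern $f''+Af=0$ with polynomial $A$, and says the statements about $g'$ follow ``in the same way''. You instead derive everything from Theorem~\ref{thm:zeros-stokes-localization}:
\begin{itemize}
\item dominance of $e^{\pm iw}$ in the half-planes $\pm\Im w>0$ gives the sectorial asymptotics and the $0/\infty$ dichotomy;
\item linear independence of $g_k^\pm$ excludes the pattern $(0,0)$, so each decaying sector is flanked by two zero-free rays and $k(g)$ is even;
\item $n$, $N$, $T$ and the deficiencies are then computed by hand.
\end{itemize}
This is a genuinely different and more self-contained route. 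It avoids justifying the transfer of the polynomial-coefficient results to the rational potential $R$ and to $g=\sqrt P\,u$, at the price of doing the Nevanlinna bookkeeping yourself. Steps~1 and~2 are sound. Step~3 needs three corrections.

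\textbf{Two slips in Step 3.} First, your count gives
\[
n(r,0,g)\sim\frac{2\sqrt{|b|}}{\pi(N+2)}\bigl(N+2-k(g)\bigr)\,r^{\frac{N+2}{2}},
\]
and correspondingly the factor $N+2-k(g)$ in $N(r,0,g)$, not the printed $N+2-2k(g)$. Your factor is the right one and the printed one is a typo. For $g=e^{iz}$ ($P=Q=1$, $N=0$, $k(g)=2$) the printed formula gives a negative count. Moreover, only $N+2-k(g)$ is compatible with the stated $T(r,g)$ and $\delta(0,g)=k(g)/(2N+4-k(g))$. You should flag this explicitly rather than pass over it.

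Second, over a sector of width $2\pi/(N+2)$ one has $\int|\sin(\tfrac{N+2}{2}(\varphi-\theta_k))|\,d\varphi=\tfrac{4}{N+2}$, not $\tfrac{2}{N+2}$. With your value, the constant in $T(r,g)$ comes out half the stated one.

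\textbf{The gap: $\delta(w_0,g)=\Delta(w_0,g)=0$.} Your argument here is not a proof. Knowing that $|g-w_0|$ is small only on a set of small measure does not bound $\int\log^+|g-w_0|^{-1}\,d\varphi$, because the integrand is unbounded. Moreover, near a ray carrying only finitely many zeros of $g$ there are infinitely many $w_0$-points, lying along the curve $|g|\approx|w_0|$, i.e.\ $\Im w=O(\log r)$.

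A clean fix, which also handles the $\delta$-arcs uniformly, is the Edrei--Fuchs lemma on small arcs. For meromorphic $h$ and measurable $E\subset[0,2\pi]$,
\[
\int_E\log^+|h(re^{i\varphi})|\,d\varphi\le C\,\mathrm{mes}(E)\Bigl(1+\log^+\frac{1}{\mathrm{mes}(E)}\Bigr)\,T(2r,h),
\]
with an absolute constant $C$. Apply it to $h=1/(g-w_0)$, for which $T(2r,h)=T(2r,g)+O(1)=O(r^{(N+2)/2})$, taking $E$ to be the union of the $\delta$-arcs around the Stokes directions. On the remaining arcs, $g\to\infty$ or $g\to0$, so $\log^+|g-w_0|^{-1}\le\log^+(2/|w_0|)$ for large $r$. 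This gives
\[
m\bigl(r,1/(g-w_0)\bigr)\le C'\delta\log(1/\delta)\,r^{(N+2)/2}+O(1),
\]
and letting $\delta\to0$ yields $m(r,1/(g-w_0))=o(T(r,g))$. Your proposed explicit analysis in the Liouville half-strips could also work, but only after treating separately the bands where $|g|\approx|w_0|$.
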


We may now characterize those entire solutions of \eqref{eq:osnovnoe} that
give rise to decompositions into sums of squares of Cauchy kernels.

\begin{theorem}\label{thm:razloz_v_sum_1}
Under the assumptions of Theorems~\ref{thm:zeros-stokes-localization} and
\ref{thm:sectorial-behavior}, let $g$ be an entire solution of \eqref{eq:osnovnoe}
having no common zeros with $P$. Then the following are equivalent:
\begin{enumerate}
\item[\textup{(i)}]
there exists a representation
\begin{equation}\label{eq:razloz}
\frac{P(z)}{g^2(z)}
=
\sum_{n=1}^{\infty}\frac{c_n}{(z-t_n)^2};
\end{equation}
\item[\textup{(ii)}]
$g(z)\to\infty$ in every Stokes sector as $z\to\infty$;
\item[\textup{(iii)}]
$g$ has infinitely many zeros near every Stokes ray.
\end{enumerate}
\end{theorem}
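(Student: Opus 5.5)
The plan is to show (ii)$\Leftrightarrow$(iii) directly from the combinatorics in Theorem~\ref{thm:sectorial-behavior}, and then to prove (iii)$\Rightarrow$(i) and (i)$\Rightarrow$(ii) by comparing $P/g^2$ with the Cauchy series. The common tool for the last two implications is one estimate: the series is uniformly small in closed subsectors of the Stokes sectors.

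\emph{(ii)$\Leftrightarrow$(iii).} By Theorem~\ref{thm:sectorial-behavior}, $g$ has infinitely many zeros near the ray $\Gamma_k$ exactly when the limiting pattern on the adjacent sectors $S_{k-1},S_k$ is $(\infty,\infty)$. If $g\to\infty$ in every Stokes sector, every adjacent pair has this pattern, so every ray carries infinitely many zeros. Conversely, if $g\to0$ in some sector $S_k$, then both rays bounding $S_k$ have a pattern containing $0$, and so carry only finitely many zeros.

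\emph{Key estimate.} Let $\{t_n\}$ be the zeros of $g$. By Theorem~\ref{thm:zeros-stokes-localization}, all but finitely many $t_n$ lie in the neighbourhoods $\Lambda_j(r,C)$. The angular width of these neighbourhoods is $O(\varepsilon_{N,D}(|z-d|))\to0$; this holds also for $N=0$, where the Euclidean width grows only logarithmically. Hence for fixed $\delta>0$ there is $c=c(\delta)>0$ with
\[
|z-t_n|\ge c\max(|z|,|t_n|)
\]
for all large $z$ with $\arg(z-d)\in[\theta_k+\delta,\theta_{k+1}-\delta]$ and all but finitely many $n$. Therefore each term $c_n/(z-t_n)^2$ is bounded by $|c_n|/(c^2|t_n|^2)$ and tends to $0$. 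Whenever $\sum|c_n|/|t_n|^2<\infty$, dominated convergence gives
\[
\sum_n\frac{c_n}{(z-t_n)^2}\to0
\]
uniformly in $\arg(z-d)\in[\theta_k+\delta,\theta_{k+1}-\delta]$. Making this uniform, and handling the finitely many exceptional poles, is where I expect most of the care to go, particularly in the case $N=0$.

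\emph{(i)$\Rightarrow$(ii).} Assume (i) holds; its poles are exactly the zeros of $g$, so its coefficients satisfy $\sum|c_n|/|t_n|^2<\infty$, and the key estimate applies. If $g\to0$ in some $S_k$, then by the asymptotics $g\sim A_kg_k^{\pm}$ the decay is exponential, so $|P/g^2|\to\infty$ in $S_k(\delta)$. This contradicts the boundedness of the right-hand side of \eqref{eq:razloz} there.

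\emph{(iii)$\Rightarrow$(i).} Set $c_n=P(t_n)/g'(t_n)^2$. Under (iii) every Stokes sequence has $\lambda_k^+\lambda_k^-\ne0$, so Corollary~\ref{cor:coefficient-summability} with $\alpha=1$ gives $\sum|c_n|/|t_n|^2<\infty$. Define
\[
h=\frac{P}{g^2}-\sum_n\frac{c_n}{(z-t_n)^2}.
\]
At each $t_n$ the function $P/g^2$ has principal part $c_n(z-t_n)^{-2}$. Its residue vanishes by the identity $P'(t_n)g'(t_n)-P(t_n)g''(t_n)=0$, which comes from \eqref{eq:osnovnoe}. Since $g$ has no common zeros with $P$, $h$ is entire. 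Both summands have finite order $\rho_0=(N+2)/2$, by Theorem~\ref{thm:finite-zero-reduction} and Lemma~\ref{lem:equality_orders}, so $h$ has order at most $\rho_0$.

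In each $S_k(\delta)$ we have $P/g^2\to0$ exponentially by (ii), and the series tends to $0$ by the key estimate; hence $h\to0$ there, in particular on the rays $\arg(z-d)=\theta_k\pm\delta$. The remaining narrow angles around the Stokes rays have opening $2\delta<\pi/\rho_0$ once $\delta$ is small. Phragm\'en--Lindel\"of in these angles then shows that $h$ is bounded. By Liouville, $h$ is constant, and since $h\to0$ along a ray, $h\equiv0$, which is \eqref{eq:razloz}.
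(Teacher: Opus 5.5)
Your proposal is correct and follows essentially the same route as the paper. It obtains (ii)$\Leftrightarrow$(iii) from Theorem~\ref{thm:sectorial-behavior}, uses the decay of the Cauchy series in closed Stokes subsectors for (i)$\Rightarrow$(ii), and for the converse kills the entire difference $P/g^2-\sum c_n(z-t_n)^{-2}$ by Phragm\'en--Lindel\"of. Your version of the last step is more precise than the paper's appeal to sectors of the critical opening $\pi/\rho(g)$: you apply Phragm\'en--Lindel\"of only in angles of opening $2\delta<\pi/\rho_0$ around the Stokes rays and then use Liouville. One citation slip: the order bound for $g$ should come from Theorem~\ref{thm:sectorial-behavior} or Theorem~\ref{thm:wv-growth-ode}, not Theorem~\ref{thm:finite-zero-reduction}, whose hypotheses presuppose the representation you are proving.
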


\begin{proof}
The equivalence of \textup{(ii)} and \textup{(iii)} follows directly from
Theorem~\ref{thm:sectorial-behavior}.

Assume \textup{(i)}. Since the zeros $t_n$ of $g$ are asymptotically
localized near the Stokes rays, the series on the right-hand side of
\eqref{eq:razloz} tends to $0$ in each Stokes sector. Hence
$g(z)\to\infty$ in every Stokes sector, proving
\textup{(i)}$\Rightarrow$\textup{(ii)}.

Assume now \textup{(ii)}, and set
\[
H(z)=\frac{P(z)}{g^2(z)}-\sum_{n=1}^{\infty}\frac{c_n}{(z-t_n)^2}.
\]
By Corollary~\ref{cor:coefficient-summability}, the series converges and $H$
is entire. Moreover, $H$ has order at most $\rho(g)=\frac{N+2}{2}$, by
Theorem~\ref{thm:zeros-stokes-localization} and Lemma~\ref{lem:equality_orders}. Since
$g(z)\to\infty$ in every Stokes sector, one has $H(z)\to0$ in every
Stokes sector of opening $\pi/\rho(g)$. The Phragmén--Lindelöf principle
therefore yields $H\equiv0$, and \eqref{eq:razloz} follows. Thus
\textup{(ii)}$\Rightarrow$\textup{(i)}.
\end{proof}

\section{The Schwarzian Reformulation and the Decomposition Criterion}

The asymptotic analysis of the previous section yields a sectorial criterion
for representability. We now reformulate the problem in terms of the
Schwarzian derivative. This isolates the global analytic obstruction in a
canonical form, namely the Laine condition, and prepares the algebraic
parametrization in the next section.

\subsection{The Schwarzian reformulation}

For a locally injective meromorphic function $F$, we write
\[
\mathcal S(F):=
\frac{F'''}{F'}-\frac32\left(\frac{F''}{F'}\right)^2
\]
for its Schwarzian derivative. Recall that $F$ is locally injective if and
only if $F'(z)\neq0$ at every finite point of its domain; in particular, all
zeros and poles of such a function are simple. The Schwarzian derivative
determines a locally injective meromorphic function up to postcomposition by a
fractional-linear map; see, for example,
\cite[Ch.~6]{LaineComplexDiffeq}.

\begin{lemma}\label{lem:schwarzian-mobius}
Let $G\subset\mathbb C$ be simply connected, and let $F_1,F_2$ be locally
injective meromorphic functions in $G$. Then
\[
\mathcal S(F_1)=\mathcal S(F_2)
\]
in $G$ if and only if there exists a fractional-linear transformation
\[
F_2=\frac{aF_1+b}{cF_1+d},
\qquad ad-bc\neq0.
\]
\end{lemma}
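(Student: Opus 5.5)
The plan is to prove the easy direction by direct computation and the converse by reducing to a function whose Schwarzian vanishes.

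\emph{The ``if'' direction.} Suppose $F_2=T\circ F_1$ with $T(w)=(aw+b)/(cw+d)$ and $ad-bc\neq0$. I will use the chain rule for the Schwarzian,
\[
\mathcal S(T\circ F_1)=(\mathcal S(T)\circ F_1)\,(F_1')^2+\mathcal S(F_1),
\]
together with the observation that $\mathcal S(T)\equiv0$ for a Möbius map. The latter is a short computation: if $c\neq0$, then $T'(w)=(ad-bc)(cw+d)^{-2}$, so $T''/T'=-2c/(cw+d)$ and $T'''/T'=6c^2/(cw+d)^2$, and the two terms of $\mathcal S(T)$ cancel. Poles of $F_1$, and points where $cF_1+d=0$, are handled by noting that $\mathcal S$ is unchanged under $F\mapsto 1/F$. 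I will check this invariance directly, or reduce to the affine case via $T=A\circ(1/w)\circ B$. The identity then holds on a dense open set, hence everywhere by analyticity.

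\emph{The ``only if'' direction, first step.} On a simply connected subdomain $G_0\subset G$ avoiding the poles of $F_1$, I will use that $F_1'\neq0$ there to write $F_1'=1/y_1^2$ for an analytic branch $y_1$. This is possible because $G_0$ is simply connected and $F_1'$ is zero-free. A computation gives
\[
\mathcal S(F_1)=-2\,y_1''/y_1 ,
\]
so $y_1$ solves $y''+\tfrac12\mathcal S(F_1)\,y=0$. Moreover $y_2:=F_1y_1$ is a second solution with Wronskian $W(y_1,y_2)=y_1^2F_1'=1$, so $F_1=y_2/y_1$ is a ratio of two independent solutions. The same construction applied to $F_2$ gives $F_2=\tilde y_2/\tilde y_1$ with $\tilde y_1,\tilde y_2$ solving the same equation, since $\mathcal S(F_1)=\mathcal S(F_2)$.

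\emph{The ``only if'' direction, second step.} Since the solution space is two-dimensional, $(\tilde y_1,\tilde y_2)$ is obtained from $(y_1,y_2)$ by an invertible constant matrix. Hence $F_2=(aF_1+b)/(cF_1+d)$ on $G_0$ with $ad-bc\neq0$, and the uniqueness theorem extends this to all of $G$.

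An alternative route to the same conclusion is to set $h=F_2\circ F_1^{-1}$ locally. By the chain rule $\mathcal S(h)=0$, so $u=(h')^{-1/2}$ satisfies $u''=0$, i.e.\ $u$ is linear, which forces $h$ to be Möbius.

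\emph{Main obstacle.} The only delicate point is the global passage from the local statement near non-poles to all of $G$, including the poles of $F_1$ and $F_2$. My first approach is to use $\mathcal S(1/F)=\mathcal S(F)$ to move around poles locally. The constants $a,b,c,d$ obtained on overlapping patches must agree, by the identity theorem applied to the Möbius relation on a nonempty open set. Connectedness of $G$ then makes the relation global. Simple connectivity is needed only for the square-root branches in the first step.
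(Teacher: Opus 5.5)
Your argument is correct. It is the standard linearization proof: write $F_j'=1/y^2$ and realize $F_j$ as a ratio of solutions of $y''+\tfrac12\mathcal S(F_j)\,y=0$. The paper gives no proof of its own and simply cites Laine's book, Ch.~6, for this fact. The same mechanism appears in the paper's Lemma~\ref{lem:equivalent-formulations}, where $f=1/h^2$ and $\mathcal A(f)=2R$.

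The patching around poles in your last paragraph is unnecessary. Once $F_2=T\circ F_1$ holds on a small disk $G_0$ avoiding the poles of both $F_1$ and $F_2$, the identity theorem for meromorphic functions on the connected set $G$ gives it everywhere. So simple connectivity of $G$ is never actually used.
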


We shall use the following theorem of Laine on the global meromorphic
solvability of the Schwarzian equation; see
\cite[Ch.~6, Thm.~6.7]{LaineComplexDiffeq}.

\begin{theorem}[Laine]\label{thm:laine}
Let $G\subset\mathbb C$ be simply connected, and let $R$ be meromorphic in
$G$. Then the equation
\[
\mathcal S(F)=2R
\]
has a meromorphic solution $F$ in $G$ if and only if, for every pole
$z_0$ of $R$, the Laurent expansion of $R$ has the form
\[
R(z)=
\frac{1-m^2}{4\,(z-z_0)^2}
+\frac{b_1}{z-z_0}
+b_2+b_3(z-z_0)+\cdots,
\]
where $m=m(z_0)\in\mathbb N$, $m\ge2$, and the determinant
\[
\mathcal{D}_m(z_0)=0
\]
vanishes, where
\[
\mathcal D_m(z_0)
=
\det
\begin{pmatrix}
1-m & 0 & 0 & \cdots & 0 & b_1\\
b_1 & 4-2m & 0 & \cdots & 0 & b_2\\
b_2 & b_1 & 9-3m & \cdots & 0 & b_3\\
\vdots & \vdots & \vdots & \ddots & \vdots & \vdots\\
b_{m-2} & b_{m-3} & b_{m-4} & \cdots & (m-1)^2-(m-1)m & b_{m-1}\\
b_{m-1} & b_{m-2} & b_{m-3} & \cdots & b_1 & b_m
\end{pmatrix}.
\]
We shall say that $2R$ satisfies the \emph{Laine condition} in $G$. If
$G=\mathbb C$, the domain is omitted from the notation.
\end{theorem}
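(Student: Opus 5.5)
The plan is to reduce the nonlinear equation $\mathcal S(F)=2R$ to the linear equation
\[
u''+Ru=0
\]
and then do a local Frobenius analysis at each pole of $R$. The basic link, which is classical and checked by direct computation, is this. If $u_1,u_2$ are linearly independent solutions of $u''+Ru=0$ on a simply connected subdomain avoiding the poles of $R$, then $F=u_1/u_2$ is locally injective there and satisfies $\mathcal S(F)=2R$. Conversely, if $F$ is locally injective with $\mathcal S(F)=2R$, then
\[
u_2=(F')^{-1/2},\qquad u_1=F\,(F')^{-1/2}
\]
are such solutions. Together with Lemma~\ref{lem:schwarzian-mobius}, every local solution of the Schwarzian equation is a ratio of solutions of the linear equation, determined up to a Möbius map. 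So the question becomes: when does a ratio $u_1/u_2$ continue to a single-valued meromorphic function on all of $G$?

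\emph{Necessity.} Let $F$ be a meromorphic solution in $G$ and $z_0$ a pole of $R$. Since $R=\tfrac12\mathcal S(F)$ is holomorphic wherever $F$ is locally injective (after a Möbius normalization, simple poles of $F$ cause no singularity), $z_0$ must be a critical point of $F$. Replace $F$ by a Möbius image $\tilde F$ with $\tilde F(z_0)=0$. Then $\tilde F(z)=(z-z_0)^m h(z)$ with $h(z_0)\neq0$ and some $m\ge2$. A short computation gives
\[
\mathcal S(\tilde F)=\frac{1-m^2}{2(z-z_0)^2}+O\bigl((z-z_0)^{-1}\bigr),
\]
so $R$ has the stated leading term. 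Next, $(\tilde F')^{-1/2}$ and $\tilde F(\tilde F')^{-1/2}$ equal $(z-z_0)^{(1-m)/2}$ and $(z-z_0)^{(1+m)/2}$ times holomorphic nonvanishing factors. Hence the linear equation has a basis of Frobenius solutions with no logarithmic term at $z_0$.

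\emph{The determinant condition.} Write
\[
u=(z-z_0)^{s}\sum_{j\ge0} a_j (z-z_0)^j,\qquad s=\tfrac{1-m}{2},
\]
which is the smaller root of the indicial equation $s(s-1)+\tfrac{1-m^2}{4}=0$. The coefficient of $a_j$ in the recurrence is $(s+j)(s+j-1)+\tfrac{1-m^2}4=j(j-m)$, and the remaining terms are $\sum_{i\ge1} b_i a_{j-i}$. For $1\le j\le m$ these equations form a lower-triangular linear system with diagonal $1-m,\,4-2m,\dots,(m-1)^2-(m-1)m,\,0$. The absence of a log term is exactly the solvability of the $m$-th equation once $a_1,\dots,a_{m-1}$ have been determined. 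By Cramer's rule, that solvability is the vanishing of the bordered determinant $\mathcal D_m(z_0)$.

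I expect this bookkeeping to be the main obstacle: matching the paper's indexing of the $b_i$ and the placement of the last column with the recurrence exactly. I would first verify the cases $m=2$ and $m=3$ by hand to fix the conventions. The value $m=1$ is excluded because a double indicial root always produces a logarithm.

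\emph{Sufficiency.} Assume the Laurent form and $\mathcal D_m(z_0)=0$ at every pole. Then near each pole both Frobenius solutions have the form $(z-z_0)^{(1\pm m)/2}$ times a holomorphic function, with no logarithm. The ratio of any two independent solutions therefore returns to itself after a loop around $z_0$, since both pick up the same sign $(-1)^{m\pm1}$. Consequently the projective monodromy of $u''+Ru=0$ around each pole is trivial.

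Because $G$ is simply connected, the fundamental group of $G$ minus the discrete pole set is generated by small loops around the poles. Hence a ratio $F=u_1/u_2$, defined near one base point, continues analytically to a single-valued function on $G$ minus the poles. Near each pole $z_0$, $F$ is a quotient of $(z-z_0)^{(1+m)/2}$ and $(z-z_0)^{(1-m)/2}$ times holomorphic factors, so it is meromorphic there. Zeros of $u_2$ away from the poles give poles of $F$ and cause no difficulty. Thus $F$ is meromorphic in $G$ and satisfies $\mathcal S(F)=2R$, completing the equivalence.
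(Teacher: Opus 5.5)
The paper gives no proof of this statement; it quotes it from Laine's monograph. Your argument is the standard proof of that result and is correct: you pass to $u''+Ru=0$, read off the exponents $(1\pm m)/2$ from the local degree $m$ of $F$, identify $\mathcal D_m(z_0)=0$ with the $j=m$ equation of the recurrence $j(j-m)a_j+\sum_{i=1}^{j}b_ia_{j-i}=0$, and use that the local monodromy is the scalar $(-1)^{m+1}$.

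Two small corrections are needed.

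\emph{The case $m=1$.} For $m=1$ the indicial roots are $0$ and $1$, not a double root; the double root occurs for $m=0$. The correct reason $m=1$ is excluded is different. The pole is then simple with $b_1\neq0$, and the $j=1$ equation $b_1a_0=0$ forces a logarithm. You never actually need this remark, since $m\ge2$ comes from the local degree in the necessity part and is assumed in the sufficiency part.

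\emph{Generators of the fundamental group.} The fundamental group of $G$ minus the poles is generated by lassos $\gamma c\gamma^{-1}$, not by small loops alone. This is harmless, because scalar monodromy is invariant under conjugation.
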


For a meromorphic function $f$, we also write
\[
\mathcal A(f):=\frac{f''}{f}-\frac32\left(\frac{f'}{f}\right)^2.
\]
If $F'=f$, then $\mathcal A(f)=\mathcal S(F)$. The next lemma records the
equivalent forms of the differential equation used below.

\begin{lemma}\label{lem:equivalent-formulations}
Let $P\not\equiv0$ and $Q$ be polynomials. On every simply connected
domain on which the relevant branches and quotients are defined, the following
formulations are equivalent:
\begin{equation}\label{eq:main-pq-equation}
g''-\frac{P'}{P}g'+\frac{Q}{P}g=0,
\end{equation}
\begin{gather*}
h''+Rh=0,
\qquad h=\frac{g}{\sqrt P},\\[0.6ex]
\mathcal A(f)=\mathcal A(P)+\frac{2Q}{P},
\qquad f=\frac{P}{g^2}=\frac1{h^2},\\[0.6ex]
F'-\frac12F^2=\mathcal A(P)+\frac{2Q}{P},
\qquad F=\frac{f'}{f}.
\end{gather*}
Here
\begin{equation}\label{eq:normal-potential-R}
R=\frac12\mathcal A(P)+\frac{Q}{P}.
\end{equation}
\end{lemma}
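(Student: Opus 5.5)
The plan is to verify three direct calculations linking the four formulations in sequence. Throughout we work on a simply connected domain $G$ that avoids the zeros of $P$ and, where quotients occur, also the zeros of $g$. On such a domain branches of $\sqrt P$ exist, and $g$, $h$, $f$, $F$ determine one another: $h=g/\sqrt P$, $f=1/h^2=P/g^2$, and $F=f'/f$. Conversely, $h$ is recovered from $f$ up to a sign, and $f$ from $F$ up to a multiplicative constant. Every equation is invariant under these ambiguities, since $\mathcal A(cf)=\mathcal A(f)$ and the equation $h''+Rh=0$ is linear. So it suffices to check that each equation transforms into the next under these substitutions.

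\emph{Step 1: the first two formulations.} The equivalence of \eqref{eq:main-pq-equation} with $h''+Rh=0$ is Lemma~\ref{lem:normal_form} with $A=-P'/P$ and $B=Q/P$. The exponential factor there is $\exp\!\bigl(-\tfrac12\log P\bigr)=P^{-1/2}$, so $u=h$, and
\[
R=\tfrac12\Bigl(\tfrac{P'}{P}\Bigr)'-\tfrac14\Bigl(\tfrac{P'}{P}\Bigr)^2+\tfrac QP .
\]
Since $(P'/P)'=P''/P-(P'/P)^2$, this equals $\tfrac12\mathcal A(P)+Q/P$, which is \eqref{eq:normal-potential-R}. This step is already Corollary~\ref{cor:normal_form_poly}.

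\emph{Step 2: the second and third formulations.} We use the identity
\[
\mathcal A(h^{-2})=-2\,\frac{h''}{h},
\]
which I will check directly. Put $\phi=h'/h$. Then $f=h^{-2}$ gives $f'/f=-2\phi$ and
\[
\frac{f''}{f}=(f'/f)'+(f'/f)^2=-2\phi'+4\phi^2 .
\]
Hence $\mathcal A(f)=-2\phi'+4\phi^2-6\phi^2=-2(\phi'+\phi^2)=-2h''/h$. Therefore $h''+Rh=0$ holds if and only if $\mathcal A(f)=2R=\mathcal A(P)+2Q/P$. For the converse direction, given $f$ we choose a branch $h=f^{-1/2}$; this requires $f$ to be zero-free on $G$, which holds on the chosen domain.

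\emph{Step 3: the third and fourth formulations.} For any $f$, setting $F=f'/f$ gives
\[
\mathcal A(f)=(F'+F^2)-\tfrac32F^2=F'-\tfrac12F^2 ,
\]
so the third and fourth equations are literally the same equation. Conversely, any solution $F$ on a simply connected domain has a primitive, and $f=\exp\int F$ reproduces it.

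The only delicate point is bookkeeping of the branches and domains, not any calculation. One must note that the constant and sign ambiguities in inverting the substitutions do not affect any of the equations, and that these formulations are all local identities. The equivalence therefore holds on every simply connected domain on which the relevant branches are defined, exactly as stated.
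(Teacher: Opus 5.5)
Your proof is correct and follows essentially the same route as the paper. The paper cites Corollary~\ref{cor:normal_form_poly} for the first equivalence, obtains $\mathcal A(f)=2R$ from $f=1/h^2$ by direct calculation, and observes that $\mathcal A(f)=F'-\frac12F^2$. Beyond that, you carry out the computation $\mathcal A(h^{-2})=-2h''/h$ explicitly and account for the sign and constant ambiguities, which the paper leaves implicit.
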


\begin{proof}
The equivalence of the first two equations is exactly
Corollary~\ref{cor:normal_form_poly}. Since $f=1/h^2$, a direct
calculation gives
\[
\mathcal A(f)=2R.
\]
Using the explicit expression for $R$ from
Corollary~\ref{cor:normal_form_poly}, we obtain
\[
\mathcal A(f)=\mathcal A(P)+\frac{2Q}{P}.
\]
Finally, for $F=f'/f$,
\[
F'-\frac12F^2
=
\frac{f''}{f}-\frac32\left(\frac{f'}{f}\right)^2
=
\mathcal A(f),
\]
which yields the last equation.
\end{proof}

The equation \eqref{eq:main-pq-equation}
has the special feature that, locally, reduction of order is governed by the
differential $P g^{-2}\,dz$. Indeed, if $g$ is a nonzero local solution
and $G$ is a local primitive of $P/g^2$, then $gG$ is a second local
solution. In general $G$ need not be single-valued. The point of the Laine
condition below is precisely to characterize when such primitives are
meromorphic in the plane.

The following local computation will be used to compare the poles of the
Schwarzian term with the zero or pole orders of $f$.

\begin{lemma}\label{lem:local-A-expansion}
Let $n\in\mathbb Z$, and let $f$ be analytic near $z_0$ with
$f(z_0)\neq0$. Then
\[
\mathcal A\!\bigl((z-z_0)^n f(z)\bigr)
=
-\frac{n(n+2)}{2\,(z-z_0)^2}
+O\!\left(\frac1{z-z_0}\right),
\qquad z\to z_0.
\]
\end{lemma}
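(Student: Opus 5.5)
The plan is to write $\varphi(z)=(z-z_0)^n f(z)$ and express $\mathcal A(\varphi)$ through the logarithmic derivative. Using the identity already noted in the proof of Lemma~\ref{lem:schwarzian-partial-fractions}, namely
\[
\mathcal A(\varphi)=\left(\frac{\varphi'}{\varphi}\right)'+\left(\frac{\varphi'}{\varphi}\right)^2-\frac32\left(\frac{\varphi'}{\varphi}\right)^2
=\left(\frac{\varphi'}{\varphi}\right)'-\frac12\left(\frac{\varphi'}{\varphi}\right)^2,
\]
which follows from $\varphi''/\varphi=(\varphi'/\varphi)'+(\varphi'/\varphi)^2$, the computation reduces to the Laurent expansion of the logarithmic derivative at $z_0$.

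Since $f(z_0)\neq0$, the quotient $f'/f$ is analytic near $z_0$. Hence
\[
\frac{\varphi'}{\varphi}=\frac{n}{z-z_0}+h(z),\qquad h=\frac{f'}{f}\ \text{analytic at } z_0 .
\]
Differentiating gives $(\varphi'/\varphi)'=-n(z-z_0)^{-2}+h'$. Squaring gives
\[
\left(\frac{\varphi'}{\varphi}\right)^2=\frac{n^2}{(z-z_0)^2}+\frac{2nh}{z-z_0}+h^2 .
\]

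Substituting these into the identity, the coefficient of $(z-z_0)^{-2}$ is $-n-\tfrac{n^2}{2}=-\tfrac{n(n+2)}{2}$. The remaining terms are $h'-\tfrac12h^2$, which is analytic, and $-nh/(z-z_0)$, which is $O((z-z_0)^{-1})$. This gives the claimed expansion.

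The argument holds uniformly for all integers $n$, including $n=0$ and negative $n$ (poles of $\varphi$). No step is a real obstacle; the only care needed is that $h$ is analytic precisely because $f(z_0)\neq0$. As a by-product, the residue of $\mathcal A(\varphi)$ at $z_0$ is $-n\,f'(z_0)/f(z_0)$.
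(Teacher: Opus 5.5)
Your proof is correct and is essentially the paper's argument: both reduce to the local expansion $\varphi'/\varphi = n/(z-z_0)+O(1)$ and substitute it into the definition of $\mathcal A$. The only difference is that you work with the form $(\varphi'/\varphi)'-\frac12(\varphi'/\varphi)^2$ rather than expanding $\varphi''/\varphi$ directly, which has the small bonus of giving the residue $-n f'(z_0)/f(z_0)$ explicitly.
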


\begin{proof}
Write
\[
u(z)=(z-z_0)^n f(z),
\qquad
\frac{u'(z)}{u(z)}=\frac{n}{z-z_0}+O(1),
\qquad
\frac{u''(z)}{u(z)}=\frac{n(n-1)}{(z-z_0)^2}
+O\!\left(\frac1{z-z_0}\right),
\]
and substitute these estimates into the definition of $\mathcal A(u)$.
\end{proof}

\subsection{The Laine condition and entire solutions}

We now isolate the global consequence of the Schwarzian reformulation. Local
primitives of $P/g^2$ always exist away from the zeros of $Pg$, but they
may have monodromy. The following theorem shows that, for the present class of
equations, the Laine condition is exactly the condition which removes this
global obstruction.

\begin{theorem}\label{thm:laine-entire-solutions}
Let $P$ and $Q$ be as above. Then the following are equivalent:
\begin{enumerate}
\item[\textup{(i)}]
there exists a local solution $g$ of \eqref{eq:main-pq-equation} and a local
primitive $G$ of $P/g^2$ which extends to a meromorphic function in
$\mathbb C$;

\item[\textup{(ii)}]
the meromorphic function $2R$, with $R$ defined by
\eqref{eq:normal-potential-R}, satisfies the Laine condition of
Theorem~\ref{thm:laine};

\item[\textup{(iii)}]
every local solution of \eqref{eq:main-pq-equation} extends to an entire function
$g$, and for every such $g$ the differential $P g^{-2}\,dz$ has a
meromorphic primitive in $\mathbb C$.
\end{enumerate}
\end{theorem}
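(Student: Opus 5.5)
The plan is to route everything through the function $F:=G$, a primitive of $f=P/g^2$, and to use Lemma~\ref{lem:equivalent-formulations}: $\mathcal A(f)=2R$. Since $\mathcal A(G')=\mathcal S(G)$, any local solution $g$ and any local primitive $G$ of $P/g^2$ satisfy
\[
\mathcal S(G)=2R,\qquad R=\tfrac12\mathcal A(P)+\frac QP .
\]
Conversely, if $G$ is a locally injective meromorphic solution of $\mathcal S(G)=2R$ on a simply connected domain, put $h:=(G')^{-1/2}$ (locally). Then $\mathcal A(1/h^2)=2R$ gives $h''+Rh=0$, hence $g:=\sqrt P\,h$ solves \eqref{eq:main-pq-equation}, and $G'=P/g^2$. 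Thus local pairs $(g,G)$ correspond, up to the choice of the square-root sign, to local solutions of the Schwarzian equation.

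\textbf{Step (i)$\Rightarrow$(ii).} If $G$ extends meromorphically to $\mathbb C$, then $\mathcal S(G)$ extends as a meromorphic function. It equals the rational function $2R$ on an open set, hence everywhere. Laine's Theorem~\ref{thm:laine} with $G=\mathbb C$ then gives the Laine condition for $2R$. One technical point needs care: Laine's theorem is stated for meromorphic $F$, but at the poles of $R$ the function $G$ need not be locally injective. This is harmless, because the necessity direction in \cite{LaineComplexDiffeq} handles critical points, where $G'$ has a zero of order $m-1$ and the coefficient $(1-m^2)/4$ appears.

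\textbf{Step (ii)$\Rightarrow$(iii).} By Theorem~\ref{thm:laine}, there is a meromorphic $G_0$ on $\mathbb C$ with $\mathcal S(G_0)=2R$. By Lemma~\ref{lem:schwarzian-mobius}, applied away from the poles of $R$ and the critical points of $G_0$, every local solution of $\mathcal S(G)=2R$ has the form $G=T\circ G_0$ with $T$ Möbius, so every such local solution extends meromorphically to $\mathbb C$. Now take any local solution $g$ of \eqref{eq:main-pq-equation}. A local primitive $G$ of $P/g^2$ solves $\mathcal S(G)=2R$, so $G=T\circ G_0$ is globally meromorphic. Hence $G'$ is meromorphic on $\mathbb C$, and so is $g^2=P/G'$.

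It remains to show that $g$ itself is entire, i.e.\ that $g^2$ is an entire square. This is the main obstacle. I would argue by analytic continuation: \eqref{eq:main-pq-equation} is linear with coefficients singular only at the zeros of $P$, so $g$ continues along every path in $\mathbb C\setminus P^{-1}(0)$. The identity $g^2=P/G'$ then shows that the continuation has trivial monodromy up to sign. Near a zero $p_j$ of $P$, the Frobenius exponents of \eqref{eq:main-pq-equation} are $0$ and $\alpha_j+1$, both nonnegative integers. The Laine determinant condition is exactly what excludes a logarithmic term there, so $g$ is holomorphic at $p_j$.

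The sign monodromy is handled the same way. A loop around $p_j$ returns $g$ to itself because the local solutions are single-valued there, and $\mathbb C\setminus P^{-1}(0)$ has fundamental group generated by such loops. Hence $g$ is entire. For the same $g$, the primitive of $Pg^{-2}\,dz$ is $G$, which is meromorphic, and this proves (iii).

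\textbf{Step (iii)$\Rightarrow$(i).} This is trivial: take any local solution $g$ and its primitive $G$.

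In the write-up, the crux will be the local analysis at the zeros of $P$. I would first try a direct Frobenius computation for \eqref{eq:main-pq-equation} at $p_j$. Using Lemma~\ref{lem:schwarzian-partial-fractions}, the indicial data of $h''+Rh=0$ there give $1-m^2=-\alpha_j(\alpha_j+2)$, i.e.\ $m=\alpha_j+1$. One then checks that the no-logarithm condition for $h$ coincides with $\mathcal D_m(p_j)=0$, which is the standard content of Laine's proof.
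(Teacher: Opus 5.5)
Your proof is correct. In (i)$\Rightarrow$(ii), (iii)$\Rightarrow$(i), and the first half of (ii)$\Rightarrow$(iii) (getting a global meromorphic $G=T\circ G_0$ via Lemma~\ref{lem:schwarzian-mobius}), it is the paper's argument; the routes differ at the step you single out as the crux, proving that $g$ is entire.

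The paper does no Frobenius analysis at that step. Once $f=G'$ is meromorphic on $\mathbb C$, it applies Lemma~\ref{lem:local-A-expansion} to $\mathcal A(f)=2R$. The double-pole coefficient of $2R$ then fixes the order $n$ of $f$ at every point: $n\in\{0,-2\}$ off the zeros of $P$, and $n\in\{\alpha_k,-2-\alpha_k\}$ at $p_k$. So $P/f$ is entire with only even-order zeros, and $g$ is, up to sign, its entire square root. Laine's theorem is used purely as a black box.

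You instead work with the linear equation itself. You use the exponents $0$ and $\alpha_j+1$ at $p_j$, the identification of $\mathcal D_{\alpha_j+1}(p_j)=0$ with the absence of logarithms, and the fact that loops around the $p_j$ generate $\pi_1(\mathbb C\setminus P^{-1}(0))$. This gives the stronger structural statement that the whole monodromy of \eqref{eq:main-pq-equation} is trivial. It also makes the global single-valuedness explicit, which the paper obtains from local square roots plus simple connectivity of $\mathbb C$.

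As written, though, your route needs the inside of Laine's proof. You can avoid that with your own observation. Since $g^2=P/G'$ with $G$ global, continuation along any loop sends every solution $g$ to $\pm g$, so the local monodromy at $p_j$ is a scalar. Both exponents are integers, so its eigenvalues equal $1$. Hence it is the identity and no logarithm occurs, with no appeal to the determinant.

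Finally, your exponent claim relies on the standing assumption that $Q/P$ has at most simple poles at the zeros of $P$. The paper relies on the same assumption when it says the principal part of $2R$ at $p_k$ is $-\alpha_k(\alpha_k+2)/(2(z-p_k)^2)$. The assumption is genuinely needed. For $P=z^2$, $Q=z^2-4$ the exponents at $0$ are $4$ and $-1$ and no logarithm occurs, so (ii) holds, yet a generic solution has a pole at $0$.
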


\begin{proof}
Assume \textup{(i)}. Let $G$ be the meromorphic continuation of the local
primitive. By Lemma~\ref{lem:equivalent-formulations}, $\mathcal S(G)=2R$
on the initial domain, and hence in the plane. Thus, the Schwarzian equation
$\mathcal S(F)=2R$ has a meromorphic solution in $\mathbb C$, so $2R$
satisfies the Laine condition by Theorem~\ref{thm:laine}.

Assume \textup{(ii)}, and let $g$ be any local solution of
\eqref{eq:main-pq-equation} near a point where $Pg\neq0$. Choose a local primitive
$G$ of $P/g^2$. Then $\mathcal S(G)=2R$
by Lemma~\ref{lem:equivalent-formulations}. Since $2R$ satisfies the Laine
condition, there exists a meromorphic function $F$ in $\mathbb C$ such
that $\mathcal S(G)=2R$.
On the initial domain, $F$ and $G$ have the same Schwarzian derivative;
hence they differ by a fractional-linear transformation. Therefore $G$
extends meromorphically to $\mathbb C$, and so does
\[
f:=G'=\frac{P}{g^2}.
\]

It remains to read off the local behavior of $g$ from the meromorphic
function $f$. Let $z_*$ be a point which is not a zero of $P$. If $f$
has zero/pole order $m\in\mathbb Z$ at $z_*$, then
Lemma~\ref{lem:local-A-expansion}, applied to $\mathcal A(f)=2R$, gives $m=0$ or $m=-2$,
because $2R$ has no pole at $z_*$. Hence $P/f$ has order $0$ or $2$
at $z_*$, and $g^2=P/f$ has a holomorphic square root there.

Now let $z_*=p_k$, where $p_k$ is a zero of $P$ of multiplicity
$\alpha_k$. The principal part of $2R$ at $p_k$ is
\[
-\frac{\alpha_k(\alpha_k+2)}{2(z-p_k)^2}.
\]
Comparison with Lemma~\ref{lem:local-A-expansion} gives $m=\alpha_k$ or $m=-2-\alpha_k$.
Thus, $P/f$ has order $0$ or $2\alpha_k+2$ at $p_k$, and again
$g^2=P/f$ has a holomorphic square root. Hence $g$ extends holomorphically
across every point of the plane. Since the original local solution $g$ was
arbitrary, every local solution of \eqref{eq:main-pq-equation} is entire, and the
argument above already shows that $P g^{-2}\,dz$ has a meromorphic primitive
in $\mathbb C$. Thus, \textup{(ii)} implies \textup{(iii)}. The implication
\textup{(iii)}$\Rightarrow$\textup{(i)} is immediate.
\end{proof}

Under the assumptions of Theorem~\ref{thm:finite-zero-reduction}, the
meromorphic primitive $G$ is not an auxiliary object. Indeed, if
\[
f(z)=\sum_{n=1}^{\infty}\frac{c_n}{(z-t_n)^2},
\]
then, up to an additive constant,
\[
G(z)
=
-\sum_{n=1}^{\infty}
\left(\frac{c_n}{z-t_n}+\frac{c_n}{t_n}\right).
\]
Thus, the condition that $P/g^2$ have a meromorphic primitive is intrinsic to
the original problem: it is exactly the condition that the quadratic
Cauchy-kernel expansion be the derivative of a meromorphic sum of simple
Cauchy kernels.

\subsection{The decomposition criterion}

We now combine the Schwarzian reformulation with the sectorial criterion in
Theorem~\ref{thm:razloz_v_sum_1}. We write
$\mathbb P^1$ for the complex projective line.

\begin{theorem}\label{thm:decomposition-from-laine}
Let $P$ and $Q$ be as in Theorem~\ref{thm:zeros-stokes-localization}, and
assume that $2R$, with $R$ defined by
\eqref{eq:normal-potential-R}, satisfies the Laine condition of
Theorem~\ref{thm:laine}. Then every local solution of
\eqref{eq:main-pq-equation} extends to an entire function.

Let $g_1,g_2$ be a fundamental system of solutions of
\eqref{eq:main-pq-equation}. For $[\alpha:\beta]\in\mathbb P^1$, put
\[
g_{\alpha,\beta}=\alpha g_1+\beta g_2.
\]
Then there exists a finite set $E\subset\mathbb P^1$ such that, for every
$[\alpha:\beta]\notin E$, the function $g_{\alpha,\beta}$ tends to
$\infty$ in every Stokes sector. Consequently, if $g_{\alpha,\beta}$ and
$P$ have no common zeros, then
\[
\frac{P(z)}{g_{\alpha,\beta}(z)^2}
=
\sum_{n=1}^{\infty}\frac{c_n}{(z-t_n)^2},
\]
where $\{t_n\}$ is the zero set of $g_{\alpha,\beta}$.
\end{theorem}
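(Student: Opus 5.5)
The first assertion follows from Theorem~\ref{thm:laine-entire-solutions}. Since $2R$ satisfies the Laine condition, the implication (ii)$\Rightarrow$(iii) there shows that every local solution of \eqref{eq:main-pq-equation} extends to an entire function. So $g_1,g_2$ may be taken entire, and every $g_{\alpha,\beta}$ is an entire solution. Once the sectorial statement is established, the decomposition follows directly from Theorem~\ref{thm:razloz_v_sum_1}, implication (ii)$\Rightarrow$(i), whenever $g_{\alpha,\beta}$ and $P$ have no common zeros. The substance of the proof is therefore the construction of the finite exceptional set $E$.

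For this I would use Theorem~\ref{thm:zeros-stokes-localization} sector by sector. For each $k=0,\dots,N+1$ we have a fundamental pair $g_k^\pm$ on $G_k(r,\delta)$. Writing $g_j=\lambda_{k,j}^+g_k^++\lambda_{k,j}^-g_k^-$ for $j=1,2$, we get
\[
g_{\alpha,\beta}=(\alpha\lambda_{k,1}^++\beta\lambda_{k,2}^+)g_k^+ +(\alpha\lambda_{k,1}^-+\beta\lambda_{k,2}^-)g_k^-.
\]
The matrix $\bigl(\lambda_{k,j}^\pm\bigr)$ is invertible, since $g_1,g_2$ are independent. Hence each of the two linear forms $\ell_k^\pm(\alpha,\beta)=\alpha\lambda_{k,1}^\pm+\beta\lambda_{k,2}^\pm$ is nonzero, and each vanishes at exactly one point of $\mathbb P^1$. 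Define $E$ as the union, over $k$, of these at most $2(N+2)$ points.

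For $[\alpha:\beta]\notin E$, both coefficients of $g_{\alpha,\beta}$ in $G_k(r,\delta)$ are nonzero for every $k$. By Theorem~\ref{thm:zeros-stokes-localization}, $g_{\alpha,\beta}$ then has infinitely many zeros in $G_k(r,\delta)$, and all but finitely many of them lie in $\Lambda_k(r,C)$. So $g_{\alpha,\beta}$ has infinitely many zeros near every Stokes ray, which is condition (iii) of Theorem~\ref{thm:razloz_v_sum_1}. The equivalence (iii)$\Leftrightarrow$(ii), via Theorem~\ref{thm:sectorial-behavior}, then gives $g_{\alpha,\beta}\to\infty$ in every Stokes sector. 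Alternatively, one sees this directly: on each sector $S_k$ one of $g_k^\pm$ dominates exponentially, and the nonvanishing of both coefficients makes the sector pattern $(\infty,\infty)$ on each side of every Stokes ray.

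The main obstacle is a compatibility issue. The fundamental pair $g_k^\pm$ depends on $k$, and Theorem~\ref{thm:zeros-stokes-localization} is stated for one fixed nontrivial solution $g$. The decomposition $g=\lambda_k^+g_k^++\lambda_k^-g_k^-$, however, comes from Theorem~\ref{thm:volterra-solutions}, where $g_k^\pm$ are canonical solutions determined by the equation alone, independent of $g$. This is what justifies linearity in $(\alpha,\beta)$ and the invertibility of the coefficient matrix. I would state this point explicitly. I would also check that ``zeros in $\Lambda_k$'' matches the notion ``zeros near $\Gamma_k$'' used in Theorem~\ref{thm:sectorial-behavior}.
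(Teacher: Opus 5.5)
Your proof is correct and is essentially the paper's argument. The paper takes, for each Stokes sector $S_k$, the single point $E_k\in\mathbb P^1$ for which $g_{\alpha,\beta}\to0$ in $S_k$ (via Theorem~\ref{thm:sectorial-behavior}) and then applies Theorem~\ref{thm:razloz_v_sum_1}. The points where your forms $\ell_k^\pm$ vanish are exactly these subdominant points, each obtained twice (once from each of the two Stokes rays bounding a sector), so your $E$ coincides with the paper's.

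Of your two ways of concluding $g_{\alpha,\beta}\to\infty$, prefer the direct dominance argument. The paper's definition of ``infinitely many zeros near $\Gamma_k$'' literally requires infinitely many zeros in $\Lambda_k(C)$ for every $C>0$, whereas Theorem~\ref{thm:zeros-stokes-localization} only guarantees this for some $C$.
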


\begin{proof}
By Theorem~\ref{thm:laine-entire-solutions}, every solution of
\eqref{eq:main-pq-equation} is entire. Fix a Stokes sector $S_k$. By
Theorem~\ref{thm:sectorial-behavior}, the solutions which tend to $0$ in
$S_k$ form a one-dimensional subspace of the solution space. Hence the
condition
\[
g_{\alpha,\beta}(z)\to0,
\qquad z\to\infty,\ z\in S_k,
\]
defines a single point $E_k\in\mathbb P^1$. Since there are only finitely
many Stokes sectors,
\[
E:=\bigcup_k E_k
\]
is finite. If $[\alpha:\beta]\notin E$, then $g_{\alpha,\beta}$ does not
tend to $0$ in any Stokes sector. By Theorem~\ref{thm:sectorial-behavior},
it follows that
\[
g_{\alpha,\beta}(z)\to\infty
\]
in every Stokes sector.

Assume in addition that $g_{\alpha,\beta}$ and $P$ have no common zeros.
Then $g_{\alpha,\beta}$ satisfies condition \textup{(ii)} of
Theorem~\ref{thm:razloz_v_sum_1}. The desired expansion follows from that
theorem.
\end{proof}

\section{Algebraic parametrization}

The results of the previous section reduce the existence problem to an
algebraic one. For fixed $P$ and fixed order $\rho$, the relevant
functions are determined by polynomials $Q$ for which the corresponding
rational function $2R$ satisfies the Laine condition. In this section we
show that these $Q$ form a finite-dimensional algebraic variety and that,
modulo the natural equivalence relation on the function side, this variety
parametrizes the corresponding class of meromorphic functions.

For a polynomial $P$ and $\rho\ge1$, we write $K_\rho(P)$ for the
class of functions of the form
\[
f(z)=\sum_{k=1}^{\infty}\frac{c_k}{(z-t_k)^2},
\qquad
\sum_{k=1}^{\infty}\frac{|c_k|}{|t_k|^2}<\infty,
\]
such that the pole sequence $\{t_k\}$ has order $\rho$, and the zero set
of $f$ coincides with the zero set of $P$, counted with multiplicities.

\subsection{The Laine variety}

Let
\[
P(z)=\prod_{k=1}^{n}(z-p_k)^{\alpha_k},
\qquad
\deg P=p,
\]
and fix $q\ge p$. We denote by $\mathcal L_q(P)$ the set of all
polynomials $Q$ of degree $q$ such that
\[
\frac{Q(z)}{P(z)}
=
S(z)+\sum_{k=1}^{n}\frac{D_k}{z-p_k},
\]
where $S$ is a polynomial of degree $q-p$, and the function
\[
2R
=
\left(\frac{P'}{P}\right)'
-
\frac12\left(\frac{P'}{P}\right)^2
+
\frac{2Q}{P}
\]
satisfies the Laine condition of Theorem~\ref{thm:laine} at all its poles. We shall call $\mathcal L_q(P)$ the
\emph{Laine variety} associated with $P$ and $q$. In the trivial case
$P\equiv1$, one has simply
\[
\mathcal L_q(1)=\{Q\in\mathbb C_q[z]:\deg Q=q\}.
\]

We shall use the following elementary lemma.

\begin{lemma}\label{lem:finite-algebraic-system}
Let $m_1,\dots,m_n\in\mathbb N$, and let
\[
F_k(D_1,\dots,D_n)=D_k^{m_k}+W_k(D_1,\dots,D_n),
\qquad k=1,\dots,n,
\]
where each $W_k\in\mathbb C[D_1,\dots,D_n]$ satisfies
\[
\deg W_k\le m_k-1.
\]
Then the system
\[
F_1(D)=\cdots=F_n(D)=0
\]
has only finitely many solutions in $\mathbb C^n$, and the sum of their
multiplicities is equal to $m_1\cdots m_n$.
\end{lemma}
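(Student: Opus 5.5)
The plan is to homogenize and pass to projective space, which turns this into Bézout's theorem. Introduce an extra variable $D_0$ and the homogenized polynomials
\[
\widetilde F_k(D_0,D_1,\dots,D_n)=D_k^{m_k}+D_0^{m_k}\,W_k\!\left(\frac{D_1}{D_0},\dots,\frac{D_n}{D_0}\right),
\qquad k=1,\dots,n.
\]
Each $\widetilde F_k$ is homogeneous of degree exactly $m_k$, because $\deg W_k\le m_k-1$ and the leading monomial $D_k^{m_k}$ survives. The affine solutions of $F_1=\cdots=F_n=0$ are exactly the points of the projective variety $V=\{\widetilde F_1=\cdots=\widetilde F_n=0\}\subset\mathbb P^n$ lying in the chart $D_0\neq0$.

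The first and key step is to show that $V$ has no points at infinity. On the hyperplane $D_0=0$, every term of $D_0^{m_k}W_k(D/D_0)$ carries a positive power of $D_0$, since $\deg W_k<m_k$, so it vanishes. The equations therefore reduce to $D_1^{m_1}=\cdots=D_n^{m_n}=0$, which forces $D_1=\cdots=D_n=0$. Together with $D_0=0$ this is not a point of $\mathbb P^n$, so $V\cap\{D_0=0\}=\emptyset$.

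Next, $V$ is finite. It is a closed subvariety of $\mathbb P^n$ that misses the hyperplane $\{D_0=0\}$. A projective variety of positive dimension meets every hyperplane, so $\dim V\le0$ and $V$ is a finite set contained in the affine chart. Alternatively, one can argue purely algebraically in the affine setting: the relations $D_k^{m_k}=-W_k$ show that the monomials $D^\beta$ with $\beta_k<m_k$ span $A=\mathbb C[D]/(F_1,\dots,F_n)$, so $\dim_{\mathbb C}A\le m_1\cdots m_n<\infty$, and the solution set is finite.

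For the multiplicity count, $n$ homogeneous equations in $\mathbb P^n$ cut out a zero-dimensional set, so this is a complete intersection. Bézout's theorem then gives that the sum of the intersection multiplicities equals $\prod\deg\widetilde F_k=m_1\cdots m_n$, and all of these points are affine. Equivalently, $\dim_{\mathbb C}A=m_1\cdots m_n$. This follows because the leading forms $D_k^{m_k}$ form a regular sequence, so the monomials $D^\beta$ with $\beta_k<m_k$ are a basis of $A$ and not merely a spanning set; the Gröbner basis view with respect to a degree order shows the same thing, since the leading terms $D_k^{m_k}$ are pairwise coprime. Since $\dim A$ is the sum of the local multiplicities, the count follows.

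The main obstacle is justifying the exact count of $m_1\cdots m_n$, not just an upper bound, and doing so cleanly. I would cite Bézout for zero-dimensional complete intersections in $\mathbb P^n$, using the no-points-at-infinity step above, rather than prove linear independence of the monomial basis by hand.
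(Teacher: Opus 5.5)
Your argument is correct, and it reaches the count by a different route from the paper. The paper deforms $F$ through $F^{(t)}=(D_1^{m_1}+tW_1,\dots,D_n^{m_n}+tW_n)$, $0\le t\le 1$. A rescaling argument ($D^{(j)}/|D^{(j)}|\to\xi$ forces $\xi_k^{m_k}=0$) shows that all zeros of all $F^{(t)}$ stay in a fixed ball $B_R$. Homotopy invariance of the Brouwer degree then gives $\deg(F,B_R,0)=\deg\bigl((D_1^{m_1},\dots,D_n^{m_n}),B_R,0\bigr)=m_1\cdots m_n$, and local degrees of a holomorphic map are identified with algebraic multiplicities. That rescaling step is the analytic form of your ``no points at infinity'' step. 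Both rest on the fact that the top-degree parts $D_k^{m_k}$ have only the trivial common zero.

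You diverge in how you extract the number. You use Bézout in $\mathbb P^n$, or, more concretely, the monomial basis $\{D^\beta:\beta_k<m_k\}$ of $A=\mathbb C[D]/(F_1,\dots,F_n)$ together with the splitting of $\dim_{\mathbb C}A$ into the dimensions of the localizations at the solution points. The local multiplicities you get coincide with the paper's local Brouwer degrees, so the two statements agree.

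Your route handles finiteness cleanly. The paper's ``finite by the boundedness argument'' tacitly uses that a bounded algebraic subset of $\mathbb C^n$ is finite. Your spanning argument also works verbatim with coefficients in $\mathbb C[S]$. It shows that $\mathbb C[S][D]/(F_1,\dots,F_n)$ is a finitely generated $\mathbb C[S]$-module, which is what is needed later for the projection $\pi$ in Theorem~\ref{thm:laine-variety} to be a finite map rather than one with merely finite fibres.

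The paper's degree-theoretic route stays within analytic tools and avoids Bézout and Gröbner machinery. It also makes stability of the count under continuous deformation of the coefficients transparent. The price is invoking the equality of local Brouwer degree and algebraic multiplicity for holomorphic maps.
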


\begin{proof}
Set
\[
F(D)=\bigl(F_1(D),\dots,F_n(D)\bigr),
\qquad
F^{(t)}(D)=
\bigl(D_1^{m_1}+tW_1(D),\dots,D_n^{m_n}+tW_n(D)\bigr),
\quad 0\le t\le1.
\]
The zeros of $F^{(t)}$, $0\le t\le1$, remain in a fixed bounded set.
Indeed, otherwise there would be sequences $D^{(j)}\in\mathbb C^n$ and
$t_j\in[0,1]$ such that
\[
F^{(t_j)}(D^{(j)})=0,
\qquad |D^{(j)}|\to\infty.
\]
Passing to a subsequence, we may assume that
\[
t_j\to t_0,
\qquad
\frac{D^{(j)}}{|D^{(j)}|}\to \xi,
\qquad |\xi|=1.
\]
Choosing $k$ with $\xi_k\neq0$, and dividing the $k$-th equation by
$|D^{(j)}|^{m_k}$, we get
\[
0=
\left(\frac{D_k^{(j)}}{|D^{(j)}|}\right)^{m_k}
+
t_j\,\frac{W_k(D^{(j)})}{|D^{(j)}|^{m_k}}
\longrightarrow
\xi_k^{m_k},
\]
since $\deg W_k\le m_k-1$, a contradiction.

Hence, for some sufficiently large $R$, the maps $F^{(t)}$ have no zeros
on $\partial B_R$. By the homotopy invariance of the Brouwer degree,
\[
\deg(F,B_R,0)=\deg(F^{(0)},B_R,0);
\]
see Deimling \cite[Ch.~1]{DeimlingNFA}. Since
\[
F^{(0)}(D)=(D_1^{m_1},\dots,D_n^{m_n}),
\]
we have
\[
\deg(F^{(0)},B_R,0)=m_1\cdots m_n.
\]
Thus
\[
\deg(F,B_R,0)=m_1\cdots m_n.
\]
The zero set $F^{-1}(0)$ is finite by the boundedness argument above. The
Brouwer degree is the sum of the local degrees, and, for a holomorphic map,
these local degrees are the algebraic multiplicities of the isolated zeros;
see again \cite[Ch.~1]{DeimlingNFA}. Therefore the sum of the multiplicities
of the solutions is $m_1\cdots m_n$.
\end{proof}

\begin{lemma}\label{lem:laine-determinant-leading-term}
Let $\mathcal D_m(b_1,\dots,b_m)$ be the determinant in
Theorem~\ref{thm:laine}. Then
\[
\mathcal D_m(b_1,\dots,b_m)
=
(-1)^{m+1}b_1^m+\widetilde{\mathcal D}_m(b_1,\dots,b_m),
\qquad
\deg \widetilde{\mathcal D}_m\le m-1,
\]
where $\deg$ is total degree in
$\mathbb C[b_1,\dots,b_m]$.
\end{lemma}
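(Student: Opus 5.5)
Write $\mathcal D_m=\det M$, where $M=(M_{ij})_{i,j=1}^m$ is the matrix displayed in Theorem~\ref{thm:laine}. We read off the leading term from a weighted-degree argument. Give each variable $b_j$ weight $j$. The entries of $M$ have the following structure:
\begin{itemize}
\item for $i\le m-1$, the diagonal entry is the constant $M_{ii}=i^2-im$;
\item for $j<i\le m$, the subdiagonal entry is $M_{ij}=b_{i-j}$ (in row $m$ this reads $M_{mj}=b_{m-j}$);
\item the last column is $M_{im}=b_i$;
\item all remaining entries vanish.
\end{itemize}
Each entry is therefore weighted-homogeneous of weight $i-j$ when $j<m$, and of weight $i$ when $j=m$. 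A term $\prod_i M_{i\sigma(i)}$ of the Leibniz expansion, with $\sigma(k)=m$, has weight
\[
\sum_{i\neq k}\bigl(i-\sigma(i)\bigr)+k=\sum_i i-\sum_{j\neq m}j=m .
\]
So $\mathcal D_m$ is weighted-homogeneous of weight $m$. Since every $b_j$ has weight $j\ge1$, every monomial has total degree at most $m$. The only monomial of weight $m$ and total degree exactly $m$ is $b_1^m$; every other monomial has total degree $\le m-1$. This already gives the decomposition with $\deg\widetilde{\mathcal D}_m\le m-1$. It remains to compute the coefficient of $b_1^m$.

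To compute it, set $b_2=\cdots=b_m=0$ and $b_1=x$. Then $M$ becomes:
\begin{itemize}
\item lower bidiagonal in its first $m-1$ columns, with diagonal $d_i=i^2-im$ ($1\le i\le m-1$) and subdiagonal $x$;
\item with last column $(x,0,\dots,0,0)^{T}$, since $b_m=0$ kills the $(m,m)$ entry.
\end{itemize}
A permutation giving a nonzero term must take $\sigma(1)=m$; otherwise row $1$ contributes $M_{11}$ and the entry $(m,m)$ is unavailable. Once $\sigma(1)=m$ is fixed, the remaining rows $2,\dots,m$ are forced onto the subdiagonal entries $(i,i-1)$. The result is the product $x^{m}$ times the sign of the $m$-cycle $(1\,m\,m-1\,\cdots\,2)$, which is $(-1)^{m-1}=(-1)^{m+1}$. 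Hence the specialization equals $(-1)^{m+1}x^m$, and the coefficient of $b_1^m$ is $(-1)^{m+1}$.

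One can cross-check the sign by cofactor expansion along the first row. This gives
\[
\mathcal D_m=(1-m)\det(\ldots)+(-1)^{m+1}b_1\det(\text{minor}),
\]
where the minor is lower triangular with diagonal $b_1,\dots,b_1$ after the specialization.

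The only delicate step is the weight bookkeeping. One must check that every nonzero entry is weighted-homogeneous, including the row-$m$ entries $b_{m-j}$ and the corner entry $b_m$. Once that holds, the total-degree bound follows immediately.
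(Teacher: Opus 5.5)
Your proof is correct, and its main argument takes a different route from the paper's.

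\textbf{The paper's route.} The paper expands along the first row. The cofactor of $1-m$ is an $(m-1)\times(m-1)$ determinant whose entries have degree at most one, so it has degree at most $m-1$. The cofactor of the corner entry $b_1$ gives $(-1)^{m+1}b_1H_{m-1}$. Here $H_{m-1}$ is a lower Hessenberg determinant with $b_1$ on the diagonal and the constants $c_j=(j+1)^2-(j+1)m$ on the superdiagonal. An induction along the first row, $H_s=b_1H_{s-1}-c_1H_s'$, then gives $H_s=b_1^s+(\text{terms of degree}\le s-1)$.

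\textbf{Your route.} You observe that $\mathcal D_m$ is isobaric of weight $m$ when $b_j$ is given weight $j$. This gives the degree bound at once, together with the fact that $b_1^m$ is the only monomial that can have total degree $m$. You then read off its coefficient from the specialization $b_2=\cdots=b_m=0$, where a single $m$-cycle survives.

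\textbf{What each buys.} Your argument avoids the induction. Strictly speaking, the paper's induction must run over Hessenberg determinants with arbitrary superdiagonal constants, since deleting the first row and column of $H_s$ shifts $c_1,c_2,\dots$ to $c_2,c_3,\dots$. Your argument also yields extra structure. Every monomial of $\widetilde{\mathcal D}_m$ has weight $m$ and contains some $b_j$ with $j\ge2$. Hence $\widetilde{\mathcal D}_m$ has degree at most $m-2$ in $b_1$ and no constant term. The paper's argument is more mechanical and does not require spotting the grading.

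\textbf{A slip in your cross-check.} After specialization, the minor of the entry $(1,m)$ is upper bidiagonal, with $b_1$ on the diagonal and the constants $i^2-im$ on the superdiagonal; it is not lower triangular. Also, the $(1-m)$-term drops out because the last column of its minor becomes zero. The value $(-1)^{m+1}b_1^m$ is unaffected.
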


\begin{proof}
Expand $\mathcal D_m$ along the first row. The term coming from $1-m$
has degree at most $m-1$. The term coming from the entry $b_1$ in the last
column is
\[
(-1)^{m+1}b_1 H_{m-1},
\]
where
\[
H_{m-1}=
\det
\begin{pmatrix}
b_1 & c_1 & 0 & \cdots & 0\\
b_2 & b_1 & c_2 & \cdots & 0\\
b_3 & b_2 & b_1 & \ddots & \vdots\\
\vdots & \vdots & \vdots & \ddots & c_{m-2}\\
b_{m-1} & b_{m-2} & b_{m-3} & \cdots & b_1
\end{pmatrix}
\]
and $c_j=(j+1)^2-(j+1)m$.

It remains only to note that such determinants satisfy
\[
H_s=b_1^s+\widetilde H_s,
\qquad
\deg \widetilde H_s\le s-1.
\]
This follows by induction on $s$: expanding $H_s$ along the first row gives
\[
H_s=b_1H_{s-1}-c_1H_s',
\]
where $H_s'$ has size $s-1$ and hence degree at most $s-1$. Therefore
$b_1H_{s-1}$ contributes the unique term $b_1^s$ of degree $s$, while all
remaining terms have degree at most $s-1$.

Thus
\[
(-1)^{m+1}b_1H_{m-1}
=
(-1)^{m+1}b_1^m
+
\text{terms of degree at most }m-1,
\]
and the lemma follows.
\end{proof}

We may now compute the dimension of the Laine variety.

\begin{theorem}\label{thm:laine-variety}
For a fixed polynomial $P$ of degree $p$ and each $q\ge p$, the set
$\mathcal L_q(P)$ carries a natural structure of a complex affine algebraic
variety of dimension $q-p+1$.
\end{theorem}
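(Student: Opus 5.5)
Fix $P$ of degree $p$ with distinct zeros $p_1,\dots,p_n$ of multiplicities $\alpha_1,\dots,\alpha_n$, and fix $q\ge p$. The plan is to coordinatize $Q$ by the pair $(S,D)$, where $S$ is the polynomial part of $Q/P$ and $D=(D_1,\dots,D_n)$ are the residues. The correspondence
\[
Q=SP+\sum_k D_k\frac{P}{z-p_k}
\]
is a linear isomorphism between $\{Q:\deg Q\le q\}$ and $\mathbb C_{q-p}[z]\times\mathbb C^n$. The condition $\deg Q=q$ is exactly that the leading coefficient of $S$ is nonzero, which is a Zariski-open condition. So $\mathcal L_q(P)$ is the subset cut out by the Laine conditions at the poles of $2R$ inside the open set $\{\text{lead}(S)\neq0\}\times\mathbb C^n\cong(\mathbb C^*\times\mathbb C^{q-p})\times\mathbb C^n$. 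The poles of $2R$ lie only at the $p_k$, since $\mathcal A(P)$ and $Q/P$ are holomorphic elsewhere.

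At $p_k$, Lemma~\ref{lem:schwarzian-partial-fractions} gives the double-pole coefficient $-\alpha_k(\alpha_k+2)/2$ for $\mathcal A(P)$. Writing $2R=\frac{1-m^2}{4(z-p_k)^2}\cdot 2+\dots$ in the normalization of Theorem~\ref{thm:laine} (the equation there is $\mathcal S(F)=2R$, with the expansion for $R$), we get $(1-m^2)/4=-\alpha_k(\alpha_k+2)/4$, hence $m=m_k=\alpha_k+1\ge2$. So the first requirement of the Laine condition holds automatically. The remaining requirement is $\mathcal D_{m_k}(b_1^{(k)},\dots,b_{m_k}^{(k)})=0$. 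Here:
\begin{itemize}
\item $b_1^{(k)}=D_k-M_k/2$ by Lemma~\ref{lem:schwarzian-partial-fractions} (after halving $2R$);
\item the higher coefficients $b_j^{(k)}$ are affine-linear in $(S,D)$: they are Taylor coefficients at $p_k$ of $S(z)+\sum_{l\ne k}D_l/(z-p_l)$ plus constants coming from $\mathcal A(P)$.
\end{itemize}
By Lemma~\ref{lem:laine-determinant-leading-term}, the $k$-th equation reads
\[
(-1)^{m_k+1}(D_k-M_k/2)^{m_k}+\widetilde{\mathcal D}_{m_k}=0 .
\]
Its remainder has total degree at most $m_k-1$ in the $b_j^{(k)}$, hence total degree at most $m_k-1$ in $D$ for fixed $S$. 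Its coefficients are polynomial in $S$. This exhibits $\mathcal L_q(P)$ as the zero set of $n$ polynomial equations in an affine variety, so it carries an affine algebraic structure. The open set $\{\text{lead}(S)\ne 0\}$ is itself affine, being the complement of a hypersurface.

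For the dimension, consider the projection $\pi:(S,D)\mapsto S$. For each fixed $S$, Lemma~\ref{lem:finite-algebraic-system} applies to the system in $D$ after the triangular affine substitution $D_k\mapsto D_k-M_k/2$ and rescaling by signs. It shows that the fibre is finite and nonempty, with $\prod_k m_k$ points counted with multiplicity. Hence $\pi$ is surjective with finite fibres onto the irreducible $(q-p+1)$-dimensional variety of admissible $S$. It follows that every irreducible component has dimension at most $q-p+1$, and at least one has dimension exactly $q-p+1$.

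To show that every component has full dimension $q-p+1$ (pure dimensionality), I would use the fact that $\mathcal L_q(P)$ is cut out by $n$ equations in a space of dimension $(q-p+1)+n$. By Krull's principal ideal theorem, every component then has dimension at least $q-p+1$. Combined with the finite fibres, this gives pure dimension $q-p+1$.

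The main obstacle is the bookkeeping verifying that the remainder $\widetilde{\mathcal D}_{m_k}$ really has degree at most $m_k-1$ in the variables $D$. The other $D_l$ enter $b_j^{(k)}$ linearly, so degree in the $b$'s bounds degree in $D$; this is fine. One must also check that the $b_1$-shift by $M_k/2$ does not depend on $D$, which holds since $M_k$ depends only on $P$. Finiteness of fibres and surjectivity then come directly from Lemma~\ref{lem:finite-algebraic-system}.
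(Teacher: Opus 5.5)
Your proposal is correct and follows essentially the same route as the paper: the coordinates $(S,D)$, $m_k=\alpha_k+1$, the leading term $(-1)^{m+1}b_1^m$ of the Laine determinant with $b_{k,1}=D_k-M_k/2$, and Lemma~\ref{lem:finite-algebraic-system} to make the projection $(S,D)\mapsto S$ finite. Your extra points only make explicit what the paper leaves implicit: the open condition $\deg Q=q$, nonempty fibres from the count $\prod_k m_k$, and pure dimensionality via Krull. One small slip: $(S,D)\mapsto SP+\sum_k D_kP/(z-p_k)$ is onto all of $\mathbb C_q[z]$ only when $P$ is squarefree; in general it is an embedding onto those $Q$ for which $Q/P$ has only simple poles. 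Since $\mathcal L_q(P)$ is defined inside that image, this does not affect the argument.
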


\begin{proof}
Let $p_1,\dots,p_n$ be the distinct zeros of $P$, and let
$\alpha_k$ be the multiplicity of $p_k$. For $Q\in\mathcal L_q(P)$ we
write
\[
\frac{Q(z)}{P(z)}
=
S(z)+\sum_{k=1}^{n}\frac{D_k}{z-p_k},
\]
where $S\in\mathbb C_{q-p}[z]$ and
$D=(D_1,\dots,D_n)\in\mathbb C^n$. Thus $Q$ is uniquely determined by
$(S,D)$, and $\mathcal L_q(P)$ is realized as an algebraic subset of
$\mathbb C_{q-p}[z]\times\mathbb C^n$.

By Lemma~\ref{lem:schwarzian-partial-fractions},
\[
2R=
\sum_{k=1}^{n}
\frac{-\alpha_k(\alpha_k+2)}{2(z-p_k)^2}
+
\sum_{k=1}^{n}\frac{2D_k-M_k}{z-p_k}
+
2S(z).
\]
Comparing the coefficient of $(z-p_k)^{-2}$ with the canonical form in
Theorem~\ref{thm:laine}, for fixed $k$, we obtain
\[
m_k=\alpha_k+1.
\]
Hence, at $p_k$,
\[
R(z)=
\frac{1-m_k^2}{4(z-p_k)^2}
+\frac{b_{k,1}}{z-p_k}
+b_{k,2}+b_{k,3}(z-p_k)+\cdots ,
\]
where
\[
b_{k,1}=D_k-\frac{M_k}{2}.
\]
The coefficients $b_{k,2},\dots,b_{k,m_k}$ are affine-linear functions of
the coefficients of $S$ and of the residues $D_1,\dots,D_n$.

By Lemma~\ref{lem:laine-determinant-leading-term}, the Laine determinant has
the form
\[
\mathcal D_m(b_1,\dots,b_m)
=
(\pm1)b_1^m+\widetilde{\mathcal D}_m(b_1,\dots,b_m),
\qquad
\deg \widetilde{\mathcal D}_m\le m-1.
\]
Applying this with $m=m_k$, the local Laine equation at $p_k$, after
substitution of the coefficients $b_{k,j}$ and division by a non-zero
constant, has the form
\[
D_k^{m_k}+W_k(S,D_1,\dots,D_n)=0,
\qquad m_k=\alpha_k+1,
\]
where
\[
\deg_D W_k\le m_k-1=\alpha_k.
\]
Here $\deg_D$ denotes the total degree in the residue variables
$D_1,\dots,D_n$, while the coefficients of $W_k$ are polynomial functions
of the coefficients of $S$.

Thus, for each fixed $S$, the Laine condition gives a system of algebraic
equations in $D_1,\dots,D_n$ of the type covered by
Lemma~\ref{lem:finite-algebraic-system}. In particular, for each fixed $S$
this system has finitely many solutions, counted with multiplicity. Hence the
projection
\[
\pi:\mathcal L_q(P)\to\mathbb C_{q-p}[z],
\qquad
\pi(Q)=S,
\]
is a finite algebraic map. It follows that $\mathcal L_q(P)$ is an affine
algebraic variety of the same dimension as the base, namely
\[
\dim \mathcal L_q(P)=q-p+1.
\]
\end{proof}

\subsection{The natural equivalence relation}

We next describe the natural equivalence relation on the function side. Every
$f\in K_\rho(P)$ has a meromorphic primitive. If $G'=f$ and
$T$ is a fractional-linear transformation, then
\[
(T\circ G)'=T'(G)f.
\]
Thus, the natural operation is not on $f$ itself, but on its primitive.

We write $f_1\sim f_2$ if there exist meromorphic primitives $G_1,G_2$ of
$f_1,f_2$, respectively, and a fractional-linear transformation $T$ such
that
\[
G_2=T\circ G_1.
\]
Equivalently, if
\[
T(w)=\frac{aw+b}{cw+d},
\qquad ad-bc\neq0,
\]
then, for a meromorphic primitive $G$ of $f_1$,
\[
f_2(z)=
\frac{(ad-bc)f_1(z)}{(cG(z)+d)^2}.
\]
Changing the additive constant in $G$ merely changes $T$, and hence the
definition is independent of the chosen primitive. It is immediate from the
definition that $\sim$ is an equivalence relation on $K_\rho(P)$.

\subsection{The moduli interpretation}

We can now identify the quotient $K_\rho(P)/{\sim}$ with the
corresponding Laine variety.

\begin{theorem}\label{thm:moduli-bijection}
Let $[f]\in K_\rho(P)/{\sim}$, and let $Q$ be the polynomial determined
by
\[
\mathcal A(f)=\mathcal A(P)+\frac{2Q}{P}.
\]
Then the map
\[
\pi : K_\rho(P)/{\sim} \to \mathcal L_{2\rho+p-2}(P),
\qquad [f]\mapsto Q,
\]
is a bijection. In particular, the quotient $K_\rho(P)/{\sim}$ carries
a natural structure of a complex affine algebraic variety of dimension
$2\rho-1$.
\end{theorem}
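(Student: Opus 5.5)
The plan has four steps: check that $\pi$ is well defined, prove injectivity, prove surjectivity, and transfer the dimension.

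\emph{Well-definedness.} Let $f\in K_\rho(P)$. By Theorem~\ref{thm:finite-zero-reduction} we can write $f=P/g^2$, where $g$ is entire with simple zeros at the $t_k$, $g$ and $P$ have no common zeros, and $Pg''-P'g'+Qg=0$ for a polynomial $Q$. The same theorem gives $\rho=(\deg Q-p+2)/2$, hence $\deg Q=2\rho+p-2=:q\ge p$. Lemma~\ref{lem:equivalent-formulations} shows that this $Q$ is the polynomial determined by $\mathcal A(f)=\mathcal A(P)+2Q/P$; it is unique because $\mathcal A(f)$ and $P$ determine it. Next, $f$ has the meromorphic primitive $G=-\sum(c_k/(z-t_k)+c_k/t_k)$, so condition (i) of Theorem~\ref{thm:laine-entire-solutions} holds, and $2R$ satisfies the Laine condition. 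The partial-fraction shape of $Q/P$ required in the definition of $\mathcal L_q(P)$ follows from the last assertion of Theorem~\ref{thm:finite-zero-reduction}, with $\deg S=q-p$. Finally, if $f_2\sim f_1$, then $G_2=T\circ G_1$, so $\mathcal S(G_2)=\mathcal S(G_1)$, i.e.\ $\mathcal A(f_2)=\mathcal A(f_1)$, and $Q$ is unchanged. Thus $\pi$ is well defined on classes.

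\emph{Injectivity.} Suppose $\pi([f_1])=\pi([f_2])$. Then $\mathcal A(f_1)=\mathcal A(f_2)$, so the meromorphic primitives satisfy $\mathcal S(G_1)=\mathcal S(G_2)$ on $\mathbb C$, which is simply connected. By Lemma~\ref{lem:schwarzian-mobius}, $G_2=T\circ G_1$ for a fractional-linear $T$, hence $f_1\sim f_2$. This step is immediate.

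\emph{Surjectivity.} I expect this to be the main step. Take $Q\in\mathcal L_q(P)$ with $q=2\rho+p-2$. By Theorem~\ref{thm:decomposition-from-laine}, all solutions of \eqref{eq:main-pq-equation} are entire, and $g_{\alpha,\beta}$ tends to $\infty$ in every Stokes sector for $[\alpha:\beta]$ outside a finite set $E$. I then need such a $g$ with no common zeros with $P$.

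To get this, fix a zero $p_k$ of $P$. The solutions vanishing at $p_k$ form at most a one-dimensional subspace, since $g\equiv0$ would otherwise follow. Excluding these finitely many further points of $\mathbb P^1$, the theorem gives the expansion $P/g^2=\sum c_n(z-t_n)^{-2}$. By Corollary~\ref{cor:coefficient-summability} with $\alpha=1$, the sum $\sum|c_n|/|t_n|^2$ is finite.

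It remains to check the other requirements of $K_\rho(P)$. The zero set of $f$ equals that of $P$ with multiplicities, since $g(p_k)\ne0$. The pole sequence has order $(N+2)/2=\rho$, by Corollary~\ref{cor:ode-growth-parameters} or Theorem~\ref{thm:sectorial-behavior}, because zeros exist near every Stokes ray. By Lemma~\ref{lem:equivalent-formulations}, $\pi([f])=Q$.

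A subtle point arises at a zero $p_k$ of $P$. The Laine condition forces the local order of $f$ there to be $\alpha_k$ or $-2-\alpha_k$. I must make sure the chosen $g$ realizes the order $\alpha_k$, i.e.\ $g(p_k)\neq0$. That is exactly what excluding the subspace above achieves. If instead every solution vanished at $p_k$, this would contradict the two-dimensionality of the local solution space at a regular singular point with exponents $0$ and $\alpha_k+2$. Here I would invoke the local indicial computation behind Theorem~\ref{thm:laine-entire-solutions}.

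\emph{Dimension.} Since $q-p+1=2\rho-1$, Theorem~\ref{thm:laine-variety} transfers the variety structure through the bijection $\pi$, giving dimension $2\rho-1$.
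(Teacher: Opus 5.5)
Your proposal is correct and follows the paper's proof step for step. The steps match: the Laine condition comes from the meromorphic primitive via Theorem~\ref{thm:laine-entire-solutions}, well-definedness from Schwarzian invariance, injectivity from Lemma~\ref{lem:schwarzian-mobius}, surjectivity from Theorem~\ref{thm:decomposition-from-laine}, and the dimension from Theorem~\ref{thm:laine-variety}. You also supply details the paper leaves implicit: the choice of $g$ avoiding the zeros of $P$, and the checks of summability and of the order of $\{t_n\}$.

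One slip: the local exponents at $p_k$ are $0$ and $\alpha_k+1$, not $\alpha_k+2$. This does not affect your conclusion that at most a one-dimensional subspace of solutions vanishes at $p_k$. Indeed, two independent solutions vanishing there to order $\alpha_k+1$ would give a nontrivial combination vanishing to higher order, which the indicial equation $r(r-1-\alpha_k)=0$ forbids.
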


\begin{proof}
Let $f\in K_\rho(P)$, and let $G$ be a meromorphic primitive of $f$.
By Theorem~\ref{thm:finite-zero-reduction}, $f=\frac{P}{g^2}$, where $g$ is an entire solution of \eqref{eq:osnovnoe}, and the associated
polynomial $Q$ satisfies
\[
\rho=\frac{\deg Q-\deg P+2}{2}.
\]
Since $G'=f=P/g^2$, Theorem~\ref{thm:laine-entire-solutions} implies that
$2R$ satisfies the Laine condition. Hence $Q\in\mathcal L_{2\rho+p-2}(P)$.

We next check that $\pi$ is defined on equivalence classes. Suppose that
$f_1\sim f_2$. Choose primitives $G_1,G_2$ with
\[
G_2=T\circ G_1
\]
for some fractional-linear transformation $T$. Since the Schwarzian
derivative is invariant under postcomposition by fractional-linear maps,
\[
\mathcal A(f_2)=\mathcal S(G_2)=\mathcal S(G_1)=\mathcal A(f_1).
\]
Thus, the polynomial $Q$ associated with $f$ depends only on equivalence
classes.

Conversely, assume that $\pi([f_1])=\pi([f_2])=Q$, and let $G_j$ be a
meromorphic primitive of $f_j$, $j=1,2$. Then
\[
\mathcal S(G_j)=\mathcal A(f_j)=\mathcal A(P)+\frac{2Q}{P},
\qquad j=1,2.
\]
Hence $G_1$ and $G_2$ have the same Schwarzian derivative. Therefore they
differ by a fractional-linear transformation, and so $f_1\sim f_2$. This
proves injectivity.

It remains to prove surjectivity. Let
\[
Q\in\mathcal L_{2\rho+p-2}(P).
\]
Then the corresponding rational function $2R$ satisfies the Laine
condition, and by Theorem~\ref{thm:laine-entire-solutions} all solutions of
\eqref{eq:osnovnoe} are entire. By
Theorem~\ref{thm:decomposition-from-laine}, we may choose a solution $g$
with no common zeros with $P$ such that
\[
f=\frac{P}{g^2}\in K_\rho(P).
\]
For this function one has
\[
\mathcal A(f)=\mathcal A(P)+\frac{2Q}{P},
\]
and hence $\pi([f])=Q$. Thus, $\pi$ is surjective.

Consequently $\pi$ is a bijection. Finally,
Theorem~\ref{thm:laine-variety} gives
\[
\dim \mathcal L_{2\rho+p-2}(P)
=
(2\rho+p-2)-p+1
=
2\rho-1,
\]
and the same dimension formula holds for $K_\rho(P)/{\sim}$.
\end{proof}

As a concrete consequence, we recover the corresponding existence statement
in the original language of sums of squares of Cauchy kernels.

\begin{theorem}\label{thm:existence-given-P-order}
Let $P$ be a polynomial of degree $p\in\mathbb N_0$, and let
$N\in\mathbb N_0$. Then there exist sequences
$\{c_k\},\{t_k\}\subset\mathbb C$, with $|t_k|\to\infty$, such that
\[
f(z)=\sum_{k=1}^{\infty}\frac{c_k}{(z-t_k)^2},
\qquad
\sum_{k=1}^{\infty}\frac{|c_k|}{|t_k|^2}<\infty,
\]
has order
\[
\rho(f)=\frac{N+2}{2},
\]
and its zero set, counted with multiplicities, coincides with the zero set
of $P$.
\end{theorem}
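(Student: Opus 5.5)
The plan is to deduce the statement from the moduli bijection of Theorem~\ref{thm:moduli-bijection}, combined with the nonemptiness of the Laine variety. Set $q=N+p$, so that $q\ge p$ and $\rho:=\frac{N+2}{2}=\frac{q-p+2}{2}$, i.e.\ $q=2\rho+p-2$. By surjectivity in Theorem~\ref{thm:moduli-bijection}, it suffices to show $\mathcal L_q(P)\neq\emptyset$. Any $Q\in\mathcal L_q(P)$ gives, via Theorem~\ref{thm:decomposition-from-laine}, a solution $g$ with no common zeros with $P$ such that $f=P/g^2\in K_\rho(P)$. This $f$ has the required expansion, absolute convergence, order $\rho$ (Theorem~\ref{thm:finite-zero-reduction}, or Lemma~\ref{lem:equality_orders} together with Theorem~\ref{thm:zeros-stokes-localization}) and zero set equal to that of $P$.

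To see that $\mathcal L_q(P)$ is nonempty, I would redo the argument of Theorem~\ref{thm:laine-variety} directly. First fix any polynomial $S$ of degree exactly $N$; for instance $S(z)=z^N$. The local Laine equations at $p_1,\dots,p_n$ then form a system
\[
D_k^{\alpha_k+1}+W_k(S,D_1,\dots,D_n)=0,\qquad \deg_D W_k\le\alpha_k,
\]
and Lemma~\ref{lem:finite-algebraic-system} says the total multiplicity of its solutions is $\prod_k(\alpha_k+1)\ge1$, so at least one solution $D\in\mathbb C^n$ exists. Putting $Q=P\bigl(S+\sum_k D_k/(z-p_k)\bigr)$ gives a polynomial of degree exactly $q$, since $\deg S=N$ and the remaining terms have degree $<p$. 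Hence $Q\in\mathcal L_q(P)$.

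Two points need checking. First, Theorem~\ref{thm:decomposition-from-laine} must produce $g_{\alpha,\beta}$ with no common zeros with $P$ while avoiding the finite set $E$. For each zero $p_k$, the solutions vanishing at $p_k$ form at most a one-dimensional subspace: a solution that is not identically zero cannot have both $g(p_k)=0$ and $g'(p_k)=0$. This needs care at a singular point, but the local analysis in the proof of Theorem~\ref{thm:laine-entire-solutions} shows that the admissible orders of solutions at $p_k$ are $0$ and $\alpha_k+1$, so the solutions vanishing at $p_k$ form at most a line. Excluding finitely many further points of $\mathbb P^1$ handles this. Second, the trivial case $p=0$ ($P$ constant, so no Laine conditions) is immediate, and there the example is simply $1/g^2$ for a generic solution of $g''+z^Ng=0$.

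The main obstacle is the existence step for the Laine system: the leading term $D_k^{m_k}$ has to really dominate after substituting $b_{k,1}=D_k-M_k/2$ and the affine $b_{k,j}$. I would verify this through Lemma~\ref{lem:laine-determinant-leading-term}, checking that $b_{k,j}$ for $j\ge2$ depends on $D$ only through terms of degree at most one. That bound keeps $\deg_D W_k\le m_k-1$.
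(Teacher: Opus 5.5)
Your proposal is correct and follows the paper's route. You establish nonemptiness of $\mathcal L_{N+p}(P)$, which the paper quotes from Theorem~\ref{thm:laine-variety}; the fibrewise content of that theorem is exactly the multiplicity count of Lemma~\ref{lem:finite-algebraic-system}, which you invoke directly with $S(z)=z^N$. You then apply the surjectivity step of Theorem~\ref{thm:moduli-bijection} via Theorem~\ref{thm:decomposition-from-laine}, as the paper does. Your extra check that only finitely many $[\alpha:\beta]\in\mathbb P^1$ give a solution vanishing at some $p_k$ fills a step the paper leaves implicit. Drop the claim that $g(p_k)=g'(p_k)=0$ forces $g\equiv0$: it fails at $p_k$ when $\alpha_k\ge1$. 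Rely instead, as you then do, on the local exponents $0$ and $\alpha_k+1$.
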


\begin{proof}
By Theorem~\ref{thm:laine-variety}, the variety
$\mathcal L_{N+p}(P)$ is nonempty. Hence, by
Theorem~\ref{thm:moduli-bijection}, the class $K_{\frac{N+2}{2}}(P)$ is nonempty,
which is exactly the required statement.
\end{proof}

\section{Examples and explicit model families}

We now discuss several explicit model families, generated by various polynomial $P$. Recall, that we studied above entire solutions $g$ of 
\begin{equation}\label{eq:osnov_eq_for_examples}
    g''-\frac{P'}{P}g'+\frac{Q}{P}g=0
\end{equation}

\subsection{Monomial models: Bessel and Airy families}

\subsubsection*{Bessel families for $P(z)=z^p$}

We use standard facts about Bessel functions; see
\cite[Ch.~10]{Olver_NIST} and \cite{NIST_DLMF}. The functions $J_\alpha$ and $Y_\alpha$ form a fundamental system of
solutions of
\[
z^2u''(z)+zu'(z)+(z^2-\alpha^2)u(z)=0.
\]
They are analytic in the slit plane $\mathbb C\setminus(-\infty,0]$, 
where the principal branch of $z^\alpha$ is fixed. Moreover,
\[
J_\alpha(z)=z^\alpha \Phi_\alpha(z),
\]
where $\Phi_\alpha$ is an entire function. If
$\alpha\notin\mathbb Z$, then
\[
Y_\alpha(z)=
\frac{J_\alpha(z)\cos \pi\alpha-J_{-\alpha}(z)}
{\sin \pi\alpha}.
\]
For $\alpha\in\mathbb Z$, the function $Y_\alpha$ is obtained by taking
the limit in the last formula; in this case logarithmic branching appears at
the origin. Finally, as $z\to\infty$ in any
sector $|\arg z|\le \pi-\varepsilon$,
\[
J_\alpha(z)
=
\sqrt{\frac{2}{\pi z}}
\left(
\cos\!\left(z-\frac{\pi\alpha}{2}-\frac{\pi}{4}\right)
+
O(z^{-1})
\right),
\]
\[
Y_\alpha(z)
=
\sqrt{\frac{2}{\pi z}}
\left(
\sin\!\left(z-\frac{\pi\alpha}{2}-\frac{\pi}{4}\right)
+
O(z^{-1})
\right).
\]

\begin{theorem}
Let $p\in\mathbb N_0$, and suppose that one of the following conditions
holds:
\begin{enumerate}
\item[\textup{(i)}] $p=0$, $N\in\mathbb N_0$, and
$(A,B)\in\mathbb C^2\setminus\{(0,0)\}$;
\item[\textup{(ii)}] $p\ge1$, $N\in\mathbb N$, the number $N+2$ does
not divide $p+1$, and $A\in\mathbb C$, $B\in\mathbb C\setminus\{0\}$.
\end{enumerate}
Then there exist sequences $\{c_n\},\{t_n\}\subset\mathbb C$,
$|t_n|\to\infty$, such that
\[
\frac{z^p}{g^2(z)}
=
\sum_{n=1}^{\infty}\frac{c_n}{(z-t_n)^2},
\qquad
\sum_{n=1}^{\infty}\frac{|c_n|}{|t_n|^2}<\infty,
\]
where
\[
g(z)
=
A\, z^{\frac{p+1}{2}}
J_{\frac{p+1}{N+2}}\!\left(a z^{\frac{N+2}{2}}\right)
+
B\, z^{\frac{p+1}{2}}
Y_{\frac{p+1}{N+2}}\!\left(a z^{\frac{N+2}{2}}\right)
\]
is an entire function of order $\frac{N+2}{2}$. In particular, when
$N=2p$,
\[
g(z)=A\sin\!\bigl(a z^{p+1}\bigr)+B\cos\!\bigl(a z^{p+1}\bigr).
\]
\end{theorem}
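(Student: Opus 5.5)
The plan is to verify directly that $g$ is an entire solution of \eqref{eq:osnov_eq_for_examples} for a suitable polynomial $Q$, and then to apply Theorem~\ref{thm:razloz_v_sum_1}, checking its condition~(ii) from the Bessel asymptotics. Put $\gamma=\frac{N+2}{2}$ and $\nu=\frac{p+1}{N+2}$, so that $\nu\gamma=\frac{p+1}{2}$.

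\textbf{Step 1: the differential equation.} For any cylinder function $\mathcal C_\nu$, the function $y=z^{\alpha}\mathcal C_\nu(az^\gamma)$ satisfies the classical identity
\[
z^2y''+(1-2\alpha)zy'+\bigl(a^2\gamma^2z^{2\gamma}+\alpha^2-\nu^2\gamma^2\bigr)y=0 .
\]
With $\alpha=\frac{p+1}{2}=\nu\gamma$ the constant term vanishes and $1-2\alpha=-p$. Hence
\[
g''-\frac{p}{z}g'+a^2\gamma^2z^{N}g=0,
\]
which is \eqref{eq:osnov_eq_for_examples} with $P=z^p$ and $Q=a^2\gamma^2z^{p+N}$. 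Thus $\deg Q-\deg P=N$.

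\textbf{Step 2: $g$ is entire, and $g(0)\neq0$ when $p\ge1$.} Write $J_{\pm\nu}(\zeta)=\zeta^{\pm\nu}\Phi_{\pm\nu}(\zeta)$, where $\Phi_{\pm\nu}$ is an even entire function of $\zeta$. Then
\[
z^{\frac{p+1}{2}}J_\nu(az^\gamma)=a^\nu z^{p+1}\,\Phi_\nu(az^\gamma),
\qquad
z^{\frac{p+1}{2}}J_{-\nu}(az^\gamma)=a^{-\nu}\,\Phi_{-\nu}(az^\gamma).
\]
Both right-hand sides are entire, because $\Phi_{\pm\nu}(az^\gamma)$ is a power series in $z^{2\gamma}=z^{N+2}$.

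In case~(ii), $\nu\notin\mathbb Z$ because $N+2\nmid p+1$. Hence $Y_\nu$ is a combination of $J_\nu$ and $J_{-\nu}$, and $g$ is entire. For $B\neq0$ the $J_{-\nu}$-component gives $g(0)=-Ba^{-\nu}\Phi_{-\nu}(0)/\sin\pi\nu\neq0$, so $g$ and $P=z^p$ have no common zeros. In case~(i), $\nu=\frac1{N+2}\in(0,\tfrac12]$, so the same formulas apply, and $P\equiv1$ makes the common-zero condition void.

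Since $g\not\equiv0$, Corollary~\ref{cor:ode-growth-parameters} gives $\rho(g)=\frac{N+2}{2}$. When $N=2p$ we have $\nu=\frac12$, and the elementary formulas for $J_{\pm1/2}$ reduce $g$ to $A\sin(az^{p+1})+B\cos(az^{p+1})$ after renormalizing the constants.

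\textbf{Step 3: growth in every Stokes sector (the main obstacle).} By Theorem~\ref{thm:razloz_v_sum_1} it suffices to show that $g\to\infty$ in every Stokes sector. By Theorem~\ref{thm:sectorial-behavior}, in each sector $g$ either tends to $0$ or to $\infty$, and the solutions tending to $0$ in a given sector form a line.

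I would work in the basis $\{z^{p+1}\Phi_\nu(az^\gamma),\ \Phi_{-\nu}(az^\gamma)\}$ with coefficient ratio $\kappa$. In the sector where $\zeta=az^\gamma$ has $|\arg\zeta|<\pi$, the decaying solutions are the Hankel combinations $H^{(1,2)}_\nu$, i.e.\ $J_{-\nu}-e^{\mp i\pi\nu}J_\nu$. These correspond to two explicit ratios of the form $\kappa=-e^{\mp i\pi\nu}\times$(normalizing constant).

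For the other sectors I use the rotation $z\mapsto\omega z$ with $\omega=e^{2\pi i/(N+2)}$. It maps solutions to solutions, fixes $\Phi_{\pm\nu}(az^\gamma)$, and multiplies $z^{p+1}$ by $\omega^{p+1}=e^{2\pi i\nu}$. Hence the forbidden ratios are the finitely many numbers $-e^{i\pi\nu(2k\pm1)}$ times the normalizing constant.

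It then remains to check that the ratio of $AJ_\nu+BY_\nu$, namely $-(A\sin\pi\nu+B\cos\pi\nu)/B$ up to the same constant, avoids this set. Equivalently, I would directly use the Bessel asymptotics for $J_\nu,Y_\nu$, transported by the rotation, to see that a cosine-type combination, not a pure exponential, survives in each sector. This verification, including the bookkeeping of branches of $z^\gamma$ across the $N+2$ sectors, is the delicate step. I expect it to be where the hypotheses $B\neq0$ and $N+2\nmid p+1$ are genuinely used.

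If some exceptional pairs $(A,B)$ turn out to fall into the forbidden set, I would fall back on Theorem~\ref{thm:decomposition-from-laine}. That theorem gives the conclusion for all but finitely many $[A:B]$, and I would treat the remaining cases separately.
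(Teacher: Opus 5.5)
\textbf{The gap is in Step 3, and it cannot be closed.} Your Steps 1 and 2 are correct, and more explicit than the paper about why $g$ is entire and why $g(0)\neq0$. The claim Step 3 needs, that $g\to\infty$ in every Stokes sector for every admissible $(A,B)$, is false, and your own computation shows it.

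Since $AJ_\nu+BY_\nu=\bigl[(A\sin\pi\nu+B\cos\pi\nu)J_\nu-BJ_{-\nu}\bigr]/\sin\pi\nu$, the ratio $-(A\sin\pi\nu+B\cos\pi\nu)/B$ equals the Hankel value $-e^{\mp i\pi\nu}$ exactly when $A=\mp iB$. That is, $AJ_\nu+BY_\nu$ is then a multiple of $H^{(1)}_\nu=J_\nu+iY_\nu$ or $H^{(2)}_\nu=J_\nu-iY_\nu$. These pairs have $B\neq0$, so they are admissible in both (i) and (ii). For them $g=\mathrm{const}\cdot z^{\frac{p+1}{2}}H^{(1,2)}_\nu(az^{\frac{N+2}{2}})$ tends to $0$ exponentially in a whole Stokes sector (for $a>0$, in $S_0$ resp.\ $S_{N+1}$). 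By the implication (i)$\Rightarrow$(ii) of Theorem~\ref{thm:razloz_v_sum_1}, $z^p/g^2$ then has no expansion in squares of Cauchy kernels.

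The most transparent instance is the displayed case $N=2p$. Taking $A=iB$ in $A\sin(az^{p+1})+B\cos(az^{p+1})$ gives $g=Be^{iaz^{p+1}}$, which has no zeros. So $z^p/g^2$ is entire and cannot be such a sum.

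More generally, your rotation argument shows that the $J_\nu$-line is dominant in every sector. Hence each subdominant line (one per Stokes sector) has $B\neq0$ and lies inside the admissible family. Your fallback of treating the exceptional ratios separately therefore cannot work: for those ratios the conclusion fails, and the statement as written is false.

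The hypotheses $B\neq0$ and $N+2\nmid p+1$ play no role in Step 3. They are exactly what Step 2 needs: no logarithmic term in $Y_\nu$, and $g(0)\neq0$.

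\textbf{What your argument does prove} is the corrected statement. The expansion holds for all $[A:B]$ outside a finite set $E\subset\mathbb P^1$ containing $[\pm i:1]$, with $B\neq0$ in case (ii). You can get this either from your explicit list of forbidden ratios or from Theorem~\ref{thm:decomposition-from-laine}. Its Laine hypothesis holds here: the Wronskian of two solutions equals $cP$, so $g_2/g_1$ is a meromorphic primitive of $cP/g_1^2$, and condition (i) of Theorem~\ref{thm:laine-entire-solutions} is satisfied.

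\textbf{The paper's own proof has the same defect.} It writes $g\sim z^{(2p-N)/4}(A_1e^{iw}+B_1e^{-iw})$ in each sector and concludes that $g\to\infty$ everywhere. This tacitly assumes the coefficient of the growing exponential is nonzero in every sector, which fails precisely for these pairs. Your proposal locates the real issue correctly; what is wrong is your expectation that it can be resolved.
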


\begin{proof}
Set
\[
P(z)=z^p,
\qquad
Q(z)=\left(\frac{a(N+2)}{2}\right)^2 z^{p+N}.
\]
Then \eqref{eq:osnov_eq_for_examples} becomes
\[
g''(z)-\frac{p}{z}g'(z)
+\left(\frac{a(N+2)}{2}\right)^2 z^N g(z)=0.
\]
With the substitution
\[
g(z)=z^{\frac{p+1}{2}}u\!\left(a z^{\frac{N+2}{2}}\right),
\]
this reduces to the Bessel equation, and hence
\[
g(z)
=
A\,z^{\frac{p+1}{2}}
J_{\frac{p+1}{N+2}}\!\left(a z^{\frac{N+2}{2}}\right)
+
B\,z^{\frac{p+1}{2}}
Y_{\frac{p+1}{N+2}}\!\left(a z^{\frac{N+2}{2}}\right).
\]

It follows from the properties of $J_\alpha$ and $Y_\alpha$, together with
the fact that $\frac{p+1}{N+2}\notin\mathbb Z$, that $g$ is an entire
function and $g(0)\ne0$. Let $z$ belong to a fixed Stokes sector of the equation. Then
\[
w=a z^{\frac{N+2}{2}}
\]
lies in a sector $|\arg w|\le\pi-\varepsilon$, and the large-argument
asymptotics of $J_\alpha$ and $Y_\alpha$ yield
\[
g(z)=z^{\frac{2p-N}{4}}
\left(
A_1e^{iw}+B_1e^{-iw}
\right)\bigl(1+O(w^{-1})\bigr)
\]
for suitable constants $A_1,B_1$. Thus, in each Stokes sector one
exponential term is increasing and the other decreasing, exactly as predicted
by the general theory, and in particular $g(z)\to\infty$ in every Stokes
sector. The required decomposition therefore follows from
Theorem~\ref{thm:razloz_v_sum_1}.

If $N=2p$, then the order of the Bessel functions is $1/2$, and the
standard elementary identities reduce the solution to
\[
g(z)=A\sin\!\bigl(a z^{p+1}\bigr)+B\cos\!\bigl(a z^{p+1}\bigr).
\]
\end{proof}

\subsubsection*{Airy families and the order $\frac32$ case}

We use standard facts about Airy functions; see
\cite[Ch.~9]{Olver_NIST} and \cite{NIST_DLMF}. The functions $\Ai$ and $\Bi$ are linearly independent entire solutions of
\[
u''(z)-zu(z)=0,
\]
and both have order $3/2$. We shall use the standard Airy asymptotics. As
$z\to\infty$, uniformly in $|\arg z|\le \pi-\delta$,
\[
\Ai(z)=
\frac{1}{2\sqrt{\pi}}\,
z^{-1/4}\exp\!\left(-\frac23 z^{3/2}\right)(1+o(1)).
\]
For $\Bi$, the elementary exponential asymptotic
\[
\Bi(z)=
\frac{1}{\sqrt{\pi}}\,
z^{-1/4}\exp\!\left(\frac23 z^{3/2}\right)(1+o(1))
\]
holds uniformly in $|\arg z|\le \pi/3-\delta$. In the remaining sectors one
uses the standard connection formulae for Airy functions.

Thus $\Ai$ tends to $0$ in the Stokes sector containing the positive real
axis and tends to $\infty$ in the other two sectors. On the other hand,
$\Bi$ tends to $\infty$ in all three Stokes sectors, after the sectors are
shrunk away from their boundary rays.

\begin{theorem}
Let $\{c_n\},\{t_n\}\subset\mathbb C$ satisfy $|t_n|\to\infty$ and
\[
f(z)=\sum_{n=1}^{\infty}\frac{c_n}{(z-t_n)^2},
\qquad
\sum_{n=1}^{\infty}\frac{|c_n|}{|t_n|^2}<\infty.
\]
Assume that $f$ is transcendental, has no zeros, and $\rho(f)=3/2$.
Then
\[
f(z)=\frac{1}{g^2(z)},
\]
where
\[
g(z)=A\,\Ai(az+b)+B\,\Bi(az+b),
\qquad
a,B\in\mathbb C\setminus\{0\},\quad A,b\in\mathbb C.
\]
\end{theorem}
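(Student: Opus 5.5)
Since $f$ has no zeros, the polynomial in Theorem~\ref{thm:finite-zero-reduction} is a nonzero constant, and after absorbing it into $g$ we may take $P\equiv1$; this absorption is what produces the normalization $f=1/g^2$. Concretely, Theorem~\ref{thm:finite-zero-reduction} applies because $f$ has finite order $3/2$ and only finitely many (namely no) zeros. It gives $f=P/g^2$ with $P$ a polynomial, $g$ entire, and $Pg''-P'g'+Qg=0$ with $Q$ a polynomial. The zeros of $f$ are exactly the zeros of $P$, because $P$ and $g$ have no common zeros: the zeros of $g$ are the poles of $f$, and a common zero would cancel, which the construction excludes. Hence $P$ is a nonzero constant, and replacing $g$ by $g/\sqrt{P}$ we may assume $P\equiv1$ and $f=1/g^2$. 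The equation then becomes $g''+Qg=0$. By the order formula $\rho_0=(\deg Q-\deg P+2)/2=3/2$ in Theorem~\ref{thm:finite-zero-reduction}, we get $\deg Q=1$. Write $Q(z)=\alpha z+\gamma$ with $\alpha\neq0$.

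Next comes the reduction to the Airy equation by an affine change of variables. Set $g(z)=u(az+b)$. Then $g''(z)=a^2u''(az+b)$, and we want $u''(w)=wu(w)$ at $w=az+b$. So we need
\[
a^2(az+b)+\alpha z+\gamma=0
\]
identically, that is, $a^3=-\alpha$ and $a^2b=-\gamma$. Choose any cube root $a$ of $-\alpha$, which is nonzero since $\alpha\neq0$, and then put $b=-\gamma/a^2$. Then $u$ is an entire solution of $u''=wu$. Since $\Ai$ and $\Bi$ form a fundamental system of entire solutions, $u=A\Ai+B\Bi$, and therefore $g(z)=A\Ai(az+b)+B\Bi(az+b)$.

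It remains to show $B\neq0$, and this is the main step. Suppose $B=0$, so $g=A\Ai(az+b)$ with $A\neq0$. By the stated Airy asymptotics, $\Ai$ tends to $0$ in the Stokes sector containing the positive real axis. Pulling back by the affine map, $g\to0$ in one Stokes sector of the equation $g''+Qg=0$. This condition (ii) of Theorem~\ref{thm:razloz_v_sum_1}, and with $P\equiv1$ there are no common zeros with $P$, so the theorem applies. Since $f=1/g^2$ does admit the representation (i) by hypothesis, the theorem forces $g\to\infty$ in every Stokes sector, a contradiction. Hence $B\neq0$.

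One could also argue directly that the series $\sum c_n(z-t_n)^{-2}$ tends to $0$ in sectors away from the Stokes rays, while $1/g^2\to\infty$ there. The only technical care needed is that the Stokes sectors of $g''+Qg=0$ are the affine images of those for $\Ai$, and that the Airy asymptotics hold uniformly in the shrunk sectors. Finally, $a\neq0$ holds by construction.
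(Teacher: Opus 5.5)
Your proposal is correct and follows the paper's proof essentially step by step: you reduce via Theorem~\ref{thm:finite-zero-reduction} to $g''+Qg=0$ with $\deg Q=1$, normalize affinely to the Airy equation, and exclude $B=0$ because $\Ai$ decays in one Stokes sector, which contradicts the implication (i)$\Rightarrow$(ii) of Theorem~\ref{thm:razloz_v_sum_1}. Your explicit argument that $P$ is constant and your explicit choice $a^3=-\alpha$, $b=-\gamma/a^2$ fill in details the paper leaves implicit; they also avoid a harmless sign slip in the paper's parametrization $Q(z)=-a^3z+ba^2$, which actually corresponds to $\Ai(az-b)$.
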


\begin{proof}
By Theorem~\ref{thm:finite-zero-reduction},
\[
f=\frac{1}{g^2},
\]
where $g$ is an entire solution of
\[
g''+Qg=0
\]
with a polynomial $Q$. Since
\[
\rho(f)=\rho(g)=\frac{\deg Q+2}{2}=\frac32,
\]
we have $\deg Q=1$, and therefore we can write
\[
Q(z)=-a^3z+ba^2,
\qquad
a\in\mathbb C\setminus\{0\},\ b\in\mathbb C.
\]
Hence
\[
g''(z)+\bigl(-a^3z+ba^2\bigr)g(z)=0,
\]
and an affine change of variables reduces this equation to the Airy equation.
Thus
\[
g(z)=A\,\Ai(az+b)+B\,\Bi(az+b).
\]

Since $f$ is zero-free, Theorem~\ref{thm:razloz_v_sum_1} implies that
$g(z)\to\infty$ in every Stokes sector. By the Airy asymptotics, $\Ai$
decays exponentially in one Stokes sector, whereas $\Bi$ grows in all
three. Hence $B=0$ is impossible, and therefore $B\neq0$. Consequently,
the condition of Theorem~\ref{thm:razloz_v_sum_1} is fulfilled, and the
required decomposition follows.
\end{proof}

\subsection{Kummer models: low-order and borderline cases}

We use standard facts about Kummer functions; see
\cite[Ch.~13]{Olver_NIST} and \cite{NIST_DLMF}. The Kummer function
$\mathrm M(\alpha,\beta,z)$ satisfies
\begin{equation}\label{eq:Kummer-equation}
z f''(z)+(\beta-z)f'(z)-\alpha f(z)=0.
\end{equation}
If $\beta\notin\mathbb Z$, then
\[
\mathrm M(\alpha,\beta,z),
\qquad
z^{1-\beta}\mathrm M(\alpha-\beta+1,2-\beta,z)
\]
form a fundamental system of solutions of \eqref{eq:Kummer-equation}. We shall use the standard asymptotic formula for Kummer's function:
for $|\arg z|\le \pi-\delta$,
\begin{equation}\label{eq:kummer_asymptotics}
\mathrm M(\alpha,\beta,z)
=
\frac{e^z z^{\alpha-\beta}}{\Gamma(\alpha)}
\bigl(1+O(z^{-1})\bigr)
+
\frac{e^{\pi i\alpha}z^{-\alpha}}{\Gamma(\beta-\alpha)}
\bigl(1+O(z^{-1})\bigr).
\end{equation}

\subsubsection*{Zero-free functions of order $2$}

\begin{theorem}
Let $\{c_n\},\{t_n\}\subset\mathbb C$ satisfy $|t_n|\to\infty$ and
\[
f(z)=\sum_{n=1}^{\infty}\frac{c_n}{(z-t_n)^2},
\qquad
\sum_{n=1}^{\infty}\frac{|c_n|}{|t_n|^2}<\infty.
\]
Assume that $f$ is transcendental, has order $2$, and has no zeros.
Then
\[
f(z)=\frac{1}{g^2(z)},
\]
where
\[
g(z)
=
e^{-a(z-b)^2/2}
\left(
A\,(z-b)\,
\mathrm M\!\left(c+\tfrac12,\tfrac32,a(z-b)^2\right)
+
B\,
\mathrm M\!\left(c,\tfrac12,a(z-b)^2\right)
\right),
\]
with $b,c\in\mathbb C$ and $a,A,B\in\mathbb C\setminus\{0\}$. In
particular, if $c=0$, then, after renaming the constants $A$ and $B$, we
obtain
\[
g(z)
=
e^{-a(z-b)^2/2}
\left(
B+
A\int\limits_0^{z-b} e^{a\zeta^2}\,d\zeta
\right).
\]
\end{theorem}

\begin{proof}
By Theorem~\ref{thm:finite-zero-reduction},
\[
f=\frac{1}{g^2},
\]
where $g$ is an entire solution of
\[
g''+Qg=0
\]
with a polynomial $Q$. Since
\[
\rho(f)=\rho(g)=\frac{\deg Q+2}{2}=2,
\]
we have $\deg Q=2$, and therefore we can write
\[
Q(z)=-a^2z^2+2a^2bz+a(1-4c-a b^2),
\qquad a\neq0.
\]
Hence
\[
g''(z)+\left(-a^2z^2+2a^2bz+a(1-4c-a b^2)\right)g(z)=0.
\]
With
\[
w=a(z-b)^2,
\qquad
g(z)=\sqrt w\,e^{-w/2}G(w),
\]
this reduces to
\[
wG''(w)+\left(\frac32-w\right)G'(w)-\left(c+\frac12\right)G(w)=0,
\]
that is, to a Kummer equation. Therefore
\[
G(w)=
A\,\mathrm M\!\left(c+\tfrac12,\tfrac32,w\right)
+
B\,w^{-1/2}\mathrm M\!\left(c,\tfrac12,w\right),
\]
and hence
\begin{equation}\label{eq:kummer_formula_sol}
g(z)
=
e^{-a(z-b)^2/2}
\left(
A\,(z-b)\,
\mathrm M\!\left(c+\tfrac12,\tfrac32,a(z-b)^2\right)
+
B\,
\mathrm M\!\left(c,\tfrac12,a(z-b)^2\right)
\right).
\end{equation}

By the large-$w$ asymptotics of $\mathrm M(\alpha,\beta,w)$, see
\eqref{eq:kummer_asymptotics}, each Kummer term is a sum of an $e^w$-part
and an algebraic part; after multiplication by
$e^{-w/2}$, these become the two opposite exponential behaviors
$e^{w/2}$ and $e^{-w/2}$, exactly as predicted by the general theory. In the resonant cases, namely when
$\alpha\in-\mathbb N_0$ or $\beta-\alpha\in-\mathbb N_0$, the Kummer
functions reduce to Hermite-polynomial solutions, and the same conclusion
follows. Since $f$ is zero-free, Theorem~\ref{thm:razloz_v_sum_1} implies that
$g(z)\to\infty$ in every Stokes sector. Consequently both sectorial
contributions must be present, and therefore $A\neq0$ and $B\neq0$.
The conclusion now follows as in the previous case. If $c=0$, then, using
\[
\mathrm M\!\left(0,\frac12,w\right)=\frac1{\sqrt\pi},
\qquad
x\,\mathrm M\!\left(\frac12,\frac32,a x^2\right)
=
\frac{2}{\sqrt\pi}
\int\limits_0^x e^{a\zeta^2}\,d\zeta ,
\]
and renaming the constants $A$ and $B$, formula
\eqref{eq:kummer_formula_sol} takes the form
\[
g(z)
=
e^{-a(z-b)^2/2}
\left(
B+
A\int\limits_0^{z-b} e^{a\zeta^2}\,d\zeta
\right).
\]
\end{proof}

\subsubsection*{The case $P(z)=z^p$ and order $1$}

We shall use the following explicit expansion.

\begin{lemma}\label{lem:kummer-expansion-order-one}
Let $0\le k\le p$, and put
\[
C_{p,k}=\frac{1}{k!(p-k)!}.
\]
Then
\[
\mathrm M(k+1,p+2,z)
=
e^z\sum_{r=p-k+1}^{p+1}\frac{\gamma_r}{z^r}
+
\sum_{r=k+1}^{p+1}\frac{\delta_r}{z^r},
\]
where
\[
\gamma_r
=
C_{p,k}
(-1)^{r-p+k-1}
(r-1)!
\binom{k}{r-p+k-1},
\qquad p-k+1\le r\le p+1,
\]
and
\[
\delta_r
=
C_{p,k}
(-1)^{k+1}
(r-1)!
\binom{p-k}{r-k-1},
\qquad k+1\le r\le p+1.
\]
In particular, both sums are nonzero.
\end{lemma}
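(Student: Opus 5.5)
The plan is to use Euler's integral representation of Kummer's function. For $\Re b>\Re a>0$,
\[
\mathrm M(a,b,z)=\frac{\Gamma(b)}{\Gamma(a)\Gamma(b-a)}\int\limits_0^1 e^{zt}t^{a-1}(1-t)^{b-a-1}\,dt .
\]
With $a=k+1$ and $b=p+2$, where $0\le k\le p$, both conditions $k+1>0$ and $p-k+1>0$ hold. This gives
\[
\mathrm M(k+1,p+2,z)=(p+1)!\,C_{p,k}\int\limits_0^1 e^{zt}\varphi(t)\,dt,
\qquad \varphi(t)=t^k(1-t)^{p-k},
\]
and $\varphi$ is a polynomial of degree $p$. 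Since $\varphi^{(p+1)}\equiv0$, integrating by parts $p+1$ times (the finite Taylor-type identity for $\int e^{zt}\varphi$) gives exactly
\[
\int\limits_0^1 e^{zt}\varphi(t)\,dt=\sum_{j=0}^{p}\frac{(-1)^j}{z^{j+1}}\Bigl(e^{z}\varphi^{(j)}(1)-\varphi^{(j)}(0)\Bigr).
\]
With $r=j+1$, the $e^z$-part and the algebraic part separate immediately.

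Next I compute the boundary derivatives by the Leibniz rule. At $t=1$, the factor $(1-t)^{p-k}$ contributes only if it is differentiated exactly $p-k$ times. Hence $\varphi^{(j)}(1)=0$ for $j<p-k$, and for $j\ge p-k$,
\[
\varphi^{(j)}(1)=\binom{j}{p-k}(-1)^{p-k}(p-k)!\,\frac{k!}{(k-(j-p+k))!}.
\]
Symmetrically, $\varphi^{(j)}(0)=0$ for $j<k$, and for $j\ge k$,
\[
\varphi^{(j)}(0)=\binom{j}{k}k!\,(-1)^{j-k}\frac{(p-k)!}{(p-j)!}.
\]
This explains the summation ranges $p-k+1\le r\le p+1$ and $k+1\le r\le p+1$. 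Substituting $j=r-1$, the factorials combine as $\binom{j}{p-k}\frac{(p-k)!\,k!}{(p-j)!}=j!\binom{k}{j-p+k}$, and similarly at $0$. The signs then collapse to $(-1)^{r-p+k-1}$ and $(-1)^{k+1}$ respectively. This yields $\gamma_r$ and $\delta_r$ in the stated form, up to the overall normalizing constant.

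The main point to watch is this normalizing constant. A check with $p=1$, $k=0$ gives
\[
\mathrm M(1,3,z)=\frac{2(e^z-1-z)}{z^2},
\]
so the honest prefactor is $(p+1)!\,C_{p,k}=\Gamma(p+2)/(k!\,(p-k)!)$ rather than $C_{p,k}$ (the case $p=0$, $k=0$, giving $(e^z-1)/z$, does not detect this). I would therefore carry out the computation with the Euler prefactor, and then either absorb the factor $(p+1)!$ into $C_{p,k}$ or note it explicitly. Since the constant is nonzero and common to both sums, the application to order-one examples is unaffected. Finally, nonvanishing of both sums follows from their top terms: $\gamma_{p+1}=\pm C\,p!\neq0$ and $\delta_{p+1}=\pm C\,p!\binom{p-k}{p-k}\neq0$.
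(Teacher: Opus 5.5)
Your argument is the paper's own proof: the Euler integral, $p+1$ integrations by parts against the degree-$p$ polynomial $\varphi(t)=t^k(1-t)^{p-k}$, and the Leibniz rule at $t=0$ and $t=1$. Your boundary values, the simplifications to $j!\binom{k}{j-p+k}$ and $j!\binom{p-k}{j-k}$, the signs, and the nonvanishing of the top coefficients $\gamma_{p+1}=(-1)^kC_{p,k}\,p!$ and $\delta_{p+1}=(-1)^{k+1}C_{p,k}\,p!$ are all correct.

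The one thing to retract is the proposed correction of the constant. In this paper $\mathrm M$ is Olver's normalized function $\mathbf M(a,b,z)=M(a,b,z)/\Gamma(b)$. The paper never says so explicitly, but its usage forces it:
\begin{itemize}
\item the asymptotic formula \eqref{eq:kummer_asymptotics} has $1/\Gamma(\alpha)$ and $1/\Gamma(\beta-\alpha)$ with no factor $\Gamma(\beta)$;
\item the paper uses $\mathrm M(0,\tfrac12,w)=1/\sqrt\pi$, whereas the unnormalized function gives $M(0,\tfrac12,w)=1$.
\end{itemize}
For $\mathbf M$, Euler's representation reads $\mathbf M(a,b,z)=\frac{1}{\Gamma(a)\Gamma(b-a)}\int_0^1 e^{zt}t^{a-1}(1-t)^{b-a-1}\,dt$. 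At $a=k+1$, $b=p+2$ its prefactor is exactly $C_{p,k}$.

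Your check $M(1,3,z)=2(e^z-1-z)/z^2$ is right for the unnormalized $M$. After division by $\Gamma(3)=2$ it agrees with the lemma. So the statement holds as written, and no factor $(p+1)!$ should be inserted.
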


\begin{proof}
We use the Euler representation
\[
\mathrm M(k+1,p+2,z)
=
C_{p,k}\int\limits_0^1 e^{zt}t^k(1-t)^{p-k}\,dt.
\]
Set
\[
\phi(t)=t^k(1-t)^{p-k}.
\]
Since $\phi$ is a polynomial of degree $p$, repeated integration by parts
gives the finite identity
\[
\int\limits_0^1 e^{zt}\phi(t)\,dt
=
e^z\sum_{r=1}^{p+1}
\frac{(-1)^{r-1}\phi^{(r-1)}(1)}{z^r}
-
\sum_{r=1}^{p+1}
\frac{(-1)^{r-1}\phi^{(r-1)}(0)}{z^r}.
\]
At $t=1$, the first nonzero derivative has order $p-k$, and for
$p-k+1\le r\le p+1$ one has
\[
(-1)^{r-1}\phi^{(r-1)}(1)
=
(-1)^{r-p+k-1}
(r-1)!
\binom{k}{r-p+k-1}.
\]
At $t=0$, the first nonzero derivative has order $k$, and for
$k+1\le r\le p+1$ one has
\[
-(-1)^{r-1}\phi^{(r-1)}(0)
=
(-1)^{k+1}
(r-1)!
\binom{p-k}{r-k-1}.
\]
Multiplying by $C_{p,k}$ gives the stated formula.
\end{proof}

Define the polynomials
\[
\Gamma_{p,k}(w)
=
\sum_{r=0}^{k}\gamma_{p+1-r}w^r,
\qquad
\Delta_{p,k}(w)
=
\sum_{r=0}^{p-k}\delta_{p+1-r}w^r,
\]
and
\[
H_{p,k}(w)
=
\sum_{r=0}^{p-k}h_{p+1-r}w^r,
\qquad
h_r=\frac{(r-1)!}{(p+1-r)!(r-k-1)!}.
\]

\begin{theorem}\label{thm:monomial-order-one}
Let $\{c_n\},\{t_n\}\subset\mathbb C$ satisfy $|t_n|\to\infty$ and
\[
f(z)=\sum_{n=1}^{\infty}\frac{c_n}{(z-t_n)^2},
\qquad
\sum_{n=1}^{\infty}\frac{|c_n|}{|t_n|^2}<\infty.
\]
Assume that $f$ is transcendental, has order $1$, and has a unique zero
of multiplicity $p$ at the origin. Then
\[
f(z)=\frac{z^p}{g(z)^2},
\]
where, for some $a,A,B\in\mathbb C\setminus\{0\}$ and some
$k\in\{0,\dots,p\}$,
\begin{equation}\label{eq:example_z^p_formula}
g(z)
=
A e^{az}\Gamma_{p,k}(2az)
+
e^{-az}\bigl(A\Delta_{p,k}(2az)+B H_{p,k}(2az)\bigr).
\end{equation}

\end{theorem}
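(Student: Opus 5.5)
By Theorem~\ref{thm:finite-zero-reduction}, $f=z^p/g^2$ with $g$ an entire solution of
\[
zg''-pg'+Q_0g=0,
\]
where we write $Q(z)=z^{p-1}Q_0(z)$ heuristically; in practice we work with $Q$ directly. Since $\rho(f)=1$, Corollary~\ref{cor:ode-growth-parameters} gives $\deg Q=\deg P=p$, so $N=0$. The residue formula of Theorem~\ref{thm:finite-zero-reduction} gives $Q/P=S+D/z$ with $S$ of degree $0$. Hence $Q(z)=-a^2z^p+Dz^{p-1}$ for some $a\ne0$ and $D\in\mathbb C$. The equation becomes
\[
zg''-pg'+(-a^2z+D)g=0 .
\]

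We substitute $g(z)=e^{-az}v(2az)$ with $w=2az$. A direct computation turns this into the Kummer equation
\[
wv''+(-p-w)v'-\alpha v=0,\qquad \alpha=-\tfrac{p}{2}-\tfrac{D}{2a}.
\]
This is \eqref{eq:Kummer-equation} with $\beta=-p$, an integer, so the exceptional Kummer theory applies. One solution is $w^{p+1}\mathrm M(\alpha+p+1,p+2,w)$, and the second generally carries a logarithm at $w=0$.

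Next we impose the Laine condition, which holds by Theorem~\ref{thm:laine-entire-solutions} since $f$ has a meromorphic primitive. It forces every local solution to be entire, i.e.\ there is no logarithmic term. For $\beta=1-(p+2)$ the log-free condition is the classical one: $\alpha+p+1\in\{1,\dots,p+1\}$. So write $\alpha+p+1=k+1$ with $0\le k\le p$; this quantizes $D$. Then $\mathrm M(k+1,p+2,w)$ is given explicitly by Lemma~\ref{lem:kummer-expansion-order-one}. Multiplying by $w^{p+1}$ gives
\[
w^{p+1}\mathrm M(k+1,p+2,w)=e^w\Gamma_{p,k}(w)+\Delta_{p,k}(w).
\]
The reversal $r\mapsto p+1-r$ matches the definitions of $\Gamma_{p,k}$ and $\Delta_{p,k}$. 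A second, log-free solution is a polynomial solution $H_{p,k}(w)$ of the Kummer equation, obtained by terminating the series through the factor $1/(r-k-1)!$. We check by direct substitution that $\sum h_{p+1-r}w^r$ satisfies the recursion; this is the routine but error-prone step. The general solution is then
\[
g(z)=Ae^{az}\Gamma_{p,k}(2az)+e^{-az}\bigl(A\Delta_{p,k}(2az)+BH_{p,k}(2az)\bigr).
\]

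Finally, $A\ne0$ and $B\ne0$ follow from Theorem~\ref{thm:razloz_v_sum_1}, since $f$ is zero-free off the origin. The Stokes sectors are the half-planes $\Re(az)>0$ and $\Re(az)<0$, and $g\to\infty$ in both. If $A=0$, then $g=Be^{-az}H$ decays where $\Re(az)>0$. If $B=0$, then $g$ is a multiple of $e^{-az}w^{p+1}\mathrm M$, whose $e^{-az}$-part $\Delta_{p,k}$ is a polynomial of degree at most $p-k$. Here we must check that in $\Re(az)<0$ the $e^{az}\Gamma$ term is negligible, so the behavior is governed by $e^{-az}\Delta$, which grows. The real obstruction is different: with $B=0$, the combination $g$ vanishes to order $p+1$ at $0$, so $g$ shares a zero with $P$. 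That contradicts the hypothesis that $f$ has exactly the zero $z^p$; otherwise $z^p/g^2$ would have a pole at $0$ and $f$ could not be $z^p/g^2$ with $g(0)\ne0$.

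The main obstacle is identifying the log-free condition precisely and confirming that $H_{p,k}$ is the correct polynomial second solution, including its normalization against $\Delta_{p,k}$. We rely on the standard Kummer connection formulas for negative integer $\beta$, verifying them against Lemma~\ref{lem:kummer-expansion-order-one}.
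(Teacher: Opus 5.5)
Your proof is correct. It arrives at the same fundamental system and excludes $A=0$, $B=0$ the same way the paper does. The one genuine difference is how you quantize the residue $D$.

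The paper writes the Laine condition at the origin as the single equation $\mathcal D_{p+1}=0$ and identifies it, up to a constant, with a tridiagonal determinant. It conjugates that determinant to $\det(bI-aT_p)$, with $T_p$ the transposed Sylvester--Kac matrix, and reads off $b=(p-2k)a$ from Sylvester's spectrum $\{p-2k\}$.

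You instead use condition (iii) of Theorem~\ref{thm:laine-entire-solutions}: every local solution is entire. You then apply the Frobenius recursion $(n+1)(n-p)c_{n+1}=(n+\alpha)c_n$ to the exponent-$0$ solution at $w=0$. Solvability at $n=p$ forces $\prod_{n=0}^{p}(\alpha+n)=0$, i.e.\ $\alpha=k-p$, which is exactly $D=(p-2k)a$.

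Your route is more elementary and self-contained. It needs neither the unexplained reduction of $\mathcal D_{p+1}$ to tridiagonal form nor Sylvester's eigenvalue formula. The same recursion with $\alpha=k-p$ gives $c_{r+1}/c_r=(p-k-r)/\bigl((r+1)(p-r)\bigr)$, which terminates at $r=p-k$. This reproduces the coefficients $h_{p+1-r}$ of $H_{p,k}$, so you verify the rational second solution rather than cite it.

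You could even bypass Laine's theorem. The function $G=-\sum_n\bigl(c_n/(z-t_n)+c_n/t_n\bigr)$ is a meromorphic primitive of $z^p/g^2$, so $gG$ is a second entire solution independent of $g$.

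What the paper's route buys is an explicit picture of the Laine variety $\mathcal L_p(z^p)$ as the $p+1$ lines $b=(p-2k)a$. This matches the count in Lemma~\ref{lem:finite-algebraic-system} and Theorem~\ref{thm:laine-variety}.

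On the coefficients, note that the sectorial argument via Theorem~\ref{thm:razloz_v_sum_1} only gives $A\neq0$. The fact $B\neq0$ comes from $g(0)=B\,p!/(p-k)!\neq0$, as you eventually say.
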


\begin{proof}
By Theorem~\ref{thm:finite-zero-reduction} $f=\frac{P}{g^2}$, where $P(z)=z^p$ and
 $g$ is an entire solution of equation \eqref{eq:osnov_eq_for_examples}.
Since
\[
\rho(f)=\rho(g)=\frac{\deg Q-\deg P+2}{2}=1,
\]
we have $\deg Q=p$. It follows from equation \eqref{eq:osnov_eq_for_examples} that $z^{p-1}$ divides
$Q(z)$ and thus
\[
        Q(z)=-a^2z^p+bz^{p-1},
        \qquad a\neq0.
\]
and $g$ satisfies
\begin{equation*}\label{eq:monomial-order-one-ode}
g''(z)-\frac{p}{z}g'(z)+\left(-a^2+\frac{b}{z}\right)g(z)=0.
\end{equation*}

By Theorem~\ref{thm:laine-entire-solutions}, the corresponding potential
satisfies the Laine condition. Here
\[
\frac{P'}{P}=\frac{p}{z},
\qquad
\frac{Q}{P}=-a^2+\frac{b}{z},
\]
and hence
\[
2R(z)
=
-\frac{p(p+2)}{2z^2}
+\frac{2b}{z}
-2a^2 .
\]
Thus, the only pole of $2R$ is at the origin, with exponent difference
$p+1$, and the Laine condition reduces to the single equation
$D_{p+1}=0$. In the present case $D_{p+1}$ is, up to a non-zero numerical factor, the
tridiagonal determinant
\[
        \Delta_p(a,b)=\det M_p(a,b),
\]
where
\[
M_p(a,b)=
\begin{pmatrix}
b & -p & 0 & \cdots & 0\\
-a^2 & b & -2(p-1) & \ddots & \vdots\\
0 & -a^2 & b & \ddots & 0\\
\vdots & \ddots & \ddots & \ddots & -p\\
0 & \cdots & 0 & -a^2 & b
\end{pmatrix}
\]
and $(M_p)_{j,j+1}=-(j+1)(p-j)$.
Consider the diagonal matrix
\[
        \Lambda_a=
        \operatorname{diag}
        \left(1,\frac{1!}{a},\frac{2!}{a^2},\dots,\frac{p!}{a^p}\right).
\]
Then direct calculation gives
\[
        M_p(a,b)=\Lambda_a^{-1}(bI-aT_p)\Lambda_a,
\]
where
\[
T_p=
\begin{pmatrix}
0 & p & 0 & \cdots & 0\\
1 & 0 & p-1 & \cdots & 0\\
0 & 2 & 0 & \ddots & 0\\
\vdots & \vdots & \ddots & \ddots & 1\\
0 & 0 & \cdots & p & 0
\end{pmatrix}
\]
is the transpose of the Sylvester--Kac matrix. Hence
\[
        \Delta_p(a,b)=\det(bI-aT_p).
\]
By Sylvester's determinant formula, as recalled by Askey
\cite[Sec.~1]{Askey2005}, the eigenvalues of $T_p$ are $p-2k$,
$k=0,\dots,p$. Therefore
\[
        \Delta_p(a,b)
        =
        \prod_{k=0}^{p}\bigl(b-a(p-2k)\bigr).
\]
Since $D_{p+1}$ differs from $\Delta_p(a,b)$ by a non-zero factor, the
Laine condition gives 
\[
b=(p-2k)a
\]
for some $k\in\{0,\dots,p\}$. With this value of $b$, set
\[
w=2az,
\qquad
g(z)=G(w)\,w^{p+1}e^{-w/2}.
\]
Then \eqref{eq:monomial-order-one-ode} becomes
\[
wG''(w)+(p+2-w)G'(w)-(k+1)G(w)=0,
\]
that is, the Kummer equation with parameters
\[
\alpha=k+1,
\qquad
\beta=p+2.
\]
By the standard theory of Kummer's equation in the resonant case
$\beta\in\mathbb N$, see \cite[Ch.~13]{Olver_NIST} and
\cite[\S13.2]{NIST_DLMF}, a fundamental system is given by
\[
\mathrm M(k+1,p+2,w),
\qquad
\Psi_{k,p}(w),
\]
where the second solution is the rational solution
\[
\Psi_{k,p}(w)=\sum_{r=k+1}^{p+1}\frac{h_r}{w^r},
\qquad
h_r=\frac{(r-1)!}{(p+1-r)!(r-k-1)!}.
\]
Hence
\[
G(w)=A\,\mathrm M(k+1,p+2,w)+B\,\Psi_{k,p}(w).
\]
By Lemma~\ref{lem:kummer-expansion-order-one},
\[
\mathrm M(k+1,p+2,w)
=
e^w\sum_{r=p-k+1}^{p+1}\frac{\gamma_r}{w^r}
+
\sum_{r=k+1}^{p+1}\frac{\delta_r}{w^r}.
\]
Hence
\[
w^{p+1}e^{-w/2}\mathrm M(k+1,p+2,w)
=
e^{w/2}\Gamma_{p,k}(w)
+
e^{-w/2}\Delta_{p,k}(w).
\]
Moreover,
\[
w^{p+1}e^{-w/2}\Psi_{k,p}(w)
=
e^{-w/2}H_{p,k}(w).
\]
Since $w=2az$, the asserted formula for $g$ follows.

It remains only to exclude the degenerate coefficients. Since $g$ and
$P(z)=z^p$ have no common zeros, one has $g(0)\neq0$. From the displayed
formula \eqref{eq:example_z^p_formula},
\[
g(0)=2aB\binom{p}{k}k!,
\]
and therefore $B\neq0$. On the other hand,
Lemma~\ref{lem:kummer-expansion-order-one} shows that
$\mathrm M(k+1,p+2,w)$ contains both an $e^w$-term and a rational term;
after multiplication by $w^{p+1}e^{-w/2}$, these give the two opposite
exponential behaviors $e^{az}$ and $e^{-az}$. Since Theorem~\ref{thm:razloz_v_sum_1} implies that $g(z)\to\infty$ in
every Stokes sector, both exponential contributions must be present. Hence
$A\neq0$, and the required decomposition follows.
\end{proof}

\subsubsection*{The borderline case $P(z)=z$}

This example realizes the borderline phenomenon from
Theorem~\ref{thm:zeros-stokes-localization}: the poles lie in logarithmic
neighborhoods of the Stokes rays but, in general, do not converge to those
rays in the Euclidean metric.

We shall also use the Lambert $W$-function, the multivalued inverse of
$w\mapsto we^w$. Its branches $W_k$, $k\in\mathbb Z$, are determined by
\[
W_k(z)e^{W_k(z)}=z.
\]
We refer to \cite[Ch.~4]{Olver_NIST} and \cite[\S4.13]{NIST_DLMF}. For each
fixed $\zeta\in\mathbb C\setminus\{0\}$,
\[
W_k(\zeta)
=
\log \zeta+2\pi i k
-
\log\!\bigl(\log \zeta+2\pi i k\bigr)
+o(1),
\qquad |k|\to\infty,
\]
with the logarithms taken on the corresponding branch. In particular,
\[
W_k(\zeta)=2\pi i k-\log(2\pi |k|)+O(1),
\qquad |k|\to\infty .
\]

\begin{theorem}\label{thm:borderline-order-one-simple-zero}
Let $\{c_n\},\{t_n\}\subset\mathbb C$ be sequences such that
$|t_n|\to\infty$ and
\[
f(z)=\sum_{n=1}^{\infty}\frac{c_n}{(z-t_n)^2},
\qquad
\sum_{n=1}^{\infty}\frac{|c_n|}{|t_n|^2}<\infty.
\]
Assume that $f$ is transcendental, has order $1$, and has a unique zero
of multiplicity $1$ at the point $0$. Then
\[
f(z)=\frac{z}{g^2(z)},
\]
where, for some $a,A,B\in\mathbb C\setminus\{0\}$, the function $g$ has
one of the forms
\[
g(z)=A e^{az}+B e^{-az}(2az+1),
\qquad
g(z)=A e^{az}(2az-1)+B e^{-az}.
\]
Moreover, the poles of $f$ lie in logarithmic neighborhoods of the Stokes
rays; in general, they do not approach those rays in the Euclidean metric.
\end{theorem}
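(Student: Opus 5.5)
This is the case $p=1$ of Theorem~\ref{thm:monomial-order-one}, so I would specialize that theorem and then add the explicit zero analysis.

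\emph{Step 1: reduction to the specialized formula.} By Theorem~\ref{thm:finite-zero-reduction}, $f=z/g^2$ with $g$ an entire solution of \eqref{eq:osnov_eq_for_examples} for $P(z)=z$. Order $1$ forces $\deg Q=1$, so $Q(z)=-a^2z+b$ with $a\neq0$. The Laine condition at $0$ (exponent difference $m=2$) gives $\prod_{k=0}^{1}(b-a(1-2k))=0$, that is, $b=\pm a$. Replacing $a$ by $-a$ interchanges the two cases, and both are listed in the statement. For $k=0$ and $k=1$ I would compute $\Gamma_{1,k}$, $\Delta_{1,k}$, $H_{1,k}$ from Lemma~\ref{lem:kummer-expansion-order-one}. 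Equivalently, I would verify directly that $e^{az}$ and $e^{-az}(2az+1)$ solve $g''-g'/z+(-a^2+a/z)g=0$. For $e^{az}$: $a^2-a^2\cdot$, i.e.\ $a^2-a/z-a^2+a/z=0$. The second is checked by a short calculation. The $b=-a$ case then follows by symmetry.

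\emph{Step 2: exclusion of degenerate constants.} We need $g(0)\neq0$, and $g\to\infty$ in both Stokes half-planes by Theorem~\ref{thm:razloz_v_sum_1}. This gives $A\neq0$ and $B\neq0$, and the constants are renamed so that the normalization matches the statement.

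\emph{Step 3: the zeros, which is the main part.} For definiteness, take $g=Ae^{az}+Be^{-az}(2az+1)$. Its zeros satisfy $e^{2az}=-(B/A)(2az+1)$. Put $u=2az+1$. Then $e^{u}=-(B/A)e\,u$, so $-u e^{-u}=A/(Be)$. Hence $-u=W_k\bigl(A/(Be)\bigr)$, and the zeros are
\[
t_k=-\frac{1+W_k(A/(Be))}{2a},\qquad k\in\mathbb Z.
\]
By the Lambert asymptotics, $W_k=2\pi ik-\log(2\pi|k|)+O(1)$. So $t_k$ lies at distance $\frac{\log|k|}{2|a|}+O(1)\asymp\log|t_k|$ from the Stokes line $\{z: \Re(az)=0\}$, which consists of the two rays $\arg z=\pm\pi/2-\arg a$. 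This matches the $N=0$ bound of Theorem~\ref{thm:zeros-stokes-localization}. The distance tends to infinity, so the poles do not approach the rays Euclideanly. The same computation applies to the second form.

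The main obstacle I expect is the bookkeeping in Step 1: matching the Kummer-based polynomials with $p=1$ to the clean exponential forms, and checking the constant normalizations. A direct verification of the two explicit solutions bypasses most of this.
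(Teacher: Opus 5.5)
Your proposal is correct and follows essentially the same route as the paper: specialize Theorem~\ref{thm:monomial-order-one} to $p=1$, where the Laine condition reduces to $b=\pm a$, and then locate the zeros through the Lambert function as $t_k=-\bigl(1+W_k(A/(eB))\bigr)/(2a)$, exactly as the paper does. Your direct check that $e^{az}$ and $e^{-az}(2az+1)$ solve the equation is a cleaner substitute for specializing the Kummer formulas, and your estimate $\Re(at_k)=\tfrac12\log|k|+O(1)$ shows that the Euclidean non-convergence holds for every admissible $A/B$, which is slightly sharper than the paper's ``generic'' remark; one small correction is that $g(0)\neq0$ only gives $A+B\neq0$, while $A\neq0$ and $B\neq0$ come from the growth of $g$ in both Stokes half-planes.
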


\begin{proof}
By Theorem~\ref{thm:monomial-order-one}, applied with $p=1$, we have
$P(z)=z$. The two possible values $k=0$ and $k=1$ give respectively
\[
g(z)=A e^{az}+B e^{-az}(2az+1)
\]
and
\[
g(z)=A e^{az}(2az-1)+B e^{-az}.
\]

It remains to describe the zero distribution in this borderline case. We treat
the first representation of $g$; the second one is handled in the same way.
In this case the zeros of $g$ are precisely the solutions of
\[
        A e^{az}+B e^{-az}(2az+1)=0.
\]
Equivalently,
\[
(2az+1)e^{-2az}=-\frac{A}{B},
\]
and therefore
\[
z_k=-\frac{1}{2a}-\frac{1}{2a}\,
W_k\!\left(\frac{A}{eB}\right),
\qquad k\in\mathbb Z,
\]
where $W_k$ denotes the $k$-th branch of the Lambert $W$-function. Since
\[
W_k(\zeta)=2\pi i k-\log |k|+O(1),
\qquad |k|\to\infty,
\]
for fixed $\zeta\neq0$, we obtain
\[
z_k=-\frac{\pi i k}{a}
+\frac{1}{2a}\log |k|+O(1),
\qquad |k|\to\infty.
\]
Thus, the zeros lie at logarithmic distance from the corresponding Stokes
rays. The logarithmic term is nonconstant; for generic values of $A/B$ it
prevents Euclidean convergence to the rays.
\end{proof}

\subsection{A two-point example: $P(z)=z(z-1)$}

This is the first nontrivial example with two prescribed zeros. Even in this
case the Laine condition leads to a finite explicit list of admissible model
families.

\begin{theorem}\label{thm:two-point-order-one}
Let $\{c_n\},\{t_n\}\subset\mathbb C$ satisfy $|t_n|\to\infty$ and
\[
f(z)=\sum_{n=1}^{\infty}\frac{c_n}{(z-t_n)^2},
\qquad
\sum_{n=1}^{\infty}\frac{|c_n|}{|t_n|^2}<\infty.
\]
Assume that $f$ is transcendental, has order $1$, and has exactly two
simple zeros, at $0$ and $1$. Then
\[
f(z)=\frac{z(z-1)}{g^2(z)},
\]
where $g$ is entire, and there exist $a,A,B\in\mathbb C\setminus\{0\}$
such that
\[
g(z)=A e^{az}P_1(z)+B e^{-az}P_2(z),
\]
where the pair $(P_1,P_2)$ is one of the following:
\[
\begin{array}{c|c}
P_1(z) & P_2(z)\\[0.4ex]
\hline
2a^2z^2-2a(a+1)z+a+1 & 1\\[0.6ex]
1 & 2a^2z^2-2a(a-1)z-a+1\\[0.6ex]
z(1+a+\sqrt{a^2+1})-1
&
z(1-a+\sqrt{a^2+1})-1\\[0.6ex]
z(1+a-\sqrt{a^2+1})-1
&
z(a-1+\sqrt{a^2+1})+1
\end{array}
\]
The parameters are chosen so that $g$ and $z(z-1)$ have no common zeros.
\end{theorem}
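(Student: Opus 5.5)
The plan follows the scheme of Theorem~\ref{thm:monomial-order-one}. By Theorem~\ref{thm:finite-zero-reduction}, $f=P/g^2$ with $P(z)=z(z-1)$, and $g$ is an entire solution of \eqref{eq:osnov_eq_for_examples}. Since $\rho=1$, we have $\deg Q=\deg P=2$. After replacing $a$ by $-a$ if necessary, write
\[
\frac{Q}{P}=-a^2+\frac{D_0}{z}+\frac{D_1}{z-1}.
\]
By Theorem~\ref{thm:laine-entire-solutions}, $2R$ satisfies the Laine condition at $0$ and at $1$. Each pole has $\alpha_k=1$, so $m_k=2$, and the determinant is a quadratic condition. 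I would compute
\[
\mathcal D_2=\det\begin{pmatrix}-1&b_1\\ b_1&b_2\end{pmatrix}=-b_2-b_1^2.
\]
So the conditions read $b_{k,1}^2+b_{k,2}=0$ for $k=0,1$. With Lemma~\ref{lem:schwarzian-partial-fractions} we get $M_0=-\tfrac12$ and $M_1=\tfrac12$. Then $b_{0,1}=D_0+\tfrac14$, and $b_{0,2}$ is the constant term of $R$ at $0$. This constant term is $-a^2/2\cdot 2/2$ plus contributions from the pole at $1$, namely $-\tfrac38$ from the $(z-1)^{-2}$ term, $\tfrac14$ from the $M_1$ term, and $-D_1$. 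The conditions at $1$ are obtained by the symmetric computation. By Lemma~\ref{lem:finite-algebraic-system} the result is a system of two quadratics in $(D_0,D_1)$ with exactly four solutions counted with multiplicity, for each $a$. This matches the four rows of the table.

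Next, I would solve the system explicitly. Subtracting the two equations should give a linear relation between $D_0$ and $D_1$, so the system reduces to one quadratic. I expect the following pattern. Two solutions should be rational in $a$; they give $Q$ for which one exponential solution is $e^{\pm az}$ times a quadratic. The other two solutions should involve $\sqrt{a^2+1}$ and give linear polynomial prefactors.

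Rather than fully trusting the determinant bookkeeping, I would cross-check each case directly. Substitute the ansatz $g=e^{az}P_1(z)$ into
\[
z(z-1)g''-(2z-1)g'+Qg=0.
\]
This gives a polynomial identity that determines $P_1$ and forces the corresponding $Q$. Then do the same for $e^{-az}P_2$ with the same $Q$. For the linear ansatz $P_1=\lambda z-1$, matching coefficients yields a quadratic in $\lambda$ with roots $1+a\pm\sqrt{a^2+1}$. For the quadratic ansatz one gets the first two rows.

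Since the Laine condition means both solutions are entire, and the model solutions $e^{\pm az}P_j$ are clearly independent, they form a fundamental system. Hence $g=Ae^{az}P_1+Be^{-az}P_2$. One should still verify in each case that the second solution has this form, with the partner polynomial read off from the table. A useful check is the Wronskian: it must be a constant multiple of $P$. This check should pin down $P_2$ and confirm that the four $Q$'s exhaust the solutions counted with multiplicity.

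Finally, apply Theorem~\ref{thm:razloz_v_sum_1}. The Stokes sectors are the two half-planes determined by the sign of $\Re(az)$. On them $g$ must tend to $\infty$, which requires both $A\neq0$ and $B\neq0$. The requirement that $g$ and $z(z-1)$ have no common zeros accounts for the remaining restriction on the parameters. The main obstacle is the exact computation of $b_{k,2}$ and the solution of the quadratic system, which must not miss a root. The direct ansatz check, together with the count of four from Lemma~\ref{lem:finite-algebraic-system}, is my safeguard there.
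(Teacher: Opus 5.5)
Your strategy matches the paper's: with $\deg Q=2$, impose the $m=2$ Laine condition $b_1^2+b_2=0$ at $0$ and $1$, solve for $Q$, integrate in each case, and use Theorem~\ref{thm:razloz_v_sum_1} to force $A,B\neq0$. The gap is in the explicit Laine bookkeeping. As written it is wrong and would not reproduce the table.

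Your values $M_0=-\tfrac12$, $M_1=\tfrac12$ come from Lemma~\ref{lem:schwarzian-partial-fractions}, whose formula for $M_k$ carries a spurious factor $\tfrac12$. Computing directly,
\[
\mathcal A(P)=-\frac{3}{2z^2}-\frac{3}{2(z-1)^2}+\frac1z-\frac1{z-1},
\]
so $M_0=-1$ and $M_1=1$. Hence $b_{0,1}=D_0+\tfrac12$ and $b_{1,1}=D_1-\tfrac12$.

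In the constant term at $0$, the double pole at $1$ contributes $-\tfrac34$ (not $-\tfrac38$), and the $M_1$-term contributes $\tfrac12$ (not $\tfrac14$). Thus $b_{0,2}=-a^2-\tfrac14-D_1$, and symmetrically $b_{1,2}=-a^2-\tfrac14+D_0$. The Laine system is therefore
\[
D_0^2+D_0-D_1-a^2=0,\qquad D_1^2-D_1+D_0-a^2=0 .
\]
Subtracting gives $D_0^2=D_1^2$. This is not a single linear relation but the pair of lines $D_1=\pm D_0$.
\begin{itemize}
\item The branch $D_1=D_0$ gives $D_0=\pm a$, which are the first two rows. In the paper's notation $Q=-a^2z^2+bz+c$ one has $c=-D_0$.
\item The branch $D_1=-D_0$ gives $D_0^2+2D_0-a^2=0$, i.e.\ $D_0=-1\pm\sqrt{a^2+1}$, which are the last two rows.
\end{itemize}

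Your ansatz cross-check is a legitimate independent safeguard. If $g_1=e^{az}P_1$ and $g_2=e^{-az}P_2$ both solve the equation, then $g_2/g_1$ is a meromorphic primitive of a constant multiple of $P/g_1^2$. So that $Q$ satisfies the Laine condition, and four distinct such $Q$'s exhaust the four solutions counted by Lemma~\ref{lem:finite-algebraic-system}.

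However, this only closes the argument when the four $Q$'s are distinct. For $a^2=-1$ the last two rows coincide. The count alone then does not exclude a further solution, whereas the explicit solution above does, since $D_0=-1$ is a double root.

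With these corrections the proof goes through as in the paper. The direct substitution also shows that in the last row the partner polynomial is
\[
z(1-a-\sqrt{a^2+1})-1=-\bigl(z(a-1+\sqrt{a^2+1})+1\bigr),
\]
with the sign absorbed into $B$.
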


\begin{proof}
By Theorem~\ref{thm:finite-zero-reduction} $f=\frac{P}{g^2}$ where $P(z)=z(z-1)$
and $g$ is an entire solution of equation \eqref{eq:osnov_eq_for_examples}. Since
\[
\rho(g)=\frac{\deg Q-\deg P+2}{2}=1,
\]
we have $\deg Q=2$. Thus, for some $a,b,c$
\[
Q(z)=-a^2z^2+bz+c,
\qquad a\neq0.
\]

By Theorem~\ref{thm:laine-entire-solutions}, the corresponding potential
satisfies the Laine condition. In the present case,
\[
R(z)
=
-\frac{3}{4z^2}
+
\frac{\frac12-c}{z}
-
\left(\frac14+b+c\right)
+O(z),
\qquad z\to0,
\]
and
\[
R(z)
=
-\frac{3}{4(z-1)^2}
+
\frac{-\frac12-a^2+b+c}{z-1}
-
\left(\frac14+a^2+c\right)
+O(z-1),
\qquad z\to1.
\]
Both poles have Laine parameters $m=2$. Hence the local Laine
condition is $D_2=0$, that is,
\[
\beta_1^2+\beta_2=0
\]
for the expansion
\[
R(z)=
-\frac{3}{4(z-z_0)^2}
+
\frac{\beta_1}{z-z_0}
+\beta_2+O(z-z_0).
\]
at each of poles. Applying this at $0$ and $1$, we get
\[
\left(\tfrac12-c\right)^2-\tfrac14-b-c=0,
\qquad
\left(-\tfrac12-a^2+b+c\right)^2-\tfrac14-a^2-c=0.
\]
The first equation gives $b=c^2-2c$, and substitution into the second yields
\[
(a-c)(a+c)(a^2-c^2+2c)=0.
\]
Therefore
\[
(b,c)=(a^2-2a,a),\qquad
(b,c)=(a^2+2a,-a),
\]
or
\[
(b,c)=(a^2,1+\sqrt{a^2+1}),
\qquad
(b,c)=(a^2,1-\sqrt{a^2+1}).
\]

For each of these four possibilities the equation is elementary to integrate;
one obtains
\[
g(z)=A e^{az}P_1(z)+B e^{-az}P_2(z),
\]
with $P_1,P_2$ as in the statement. The conditions $g(0)\neq0$ and
$g(1)\neq0$ exclude precisely the choices for which $g$ and
$P(z)=z(z-1)$ have a common zero. Finally, Theorem~\ref{thm:razloz_v_sum_1} gives $g(z)\to\infty$ in every
Stokes sector; hence neither exponential contribution may vanish. Thus
$A,B\neq0$, and the required decomposition follows.
\end{proof}

\subsection{Pullback constructions for general $P$}

The preceding examples come from explicit model equations. The following
construction pulls such equations back by a polynomial primitive of $P$, and
therefore gives examples with an arbitrary prescribed zero polynomial.

Let $\mathcal P$ be a polynomial primitive of $P$, that is, $\mathcal P'(z)=P(z)$.

\begin{theorem}\label{thm:pullback-construction}
Let $P$ and $T$ be polynomials of degrees $p\in\mathbb N_0$ and
$t\in\mathbb N_0$, respectively, and let $h$ be an entire solution of
\[
h''+Th=0
\]
which tends to $\infty$ in every Stokes sector of this equation. Let
\[
H(z)=a\mathcal P(z)+b,
\qquad a\in\mathbb C\setminus\{0\},\quad b\in\mathbb C,
\]
and set
\[
g=h\circ H.
\]
Assume that $g$ and $P$ have no common zeros. Then
\[
\frac{P(z)}{g^2(z)}
=
\sum_{n=1}^{\infty}\frac{c_n}{(z-t_n)^2},
\qquad
\sum_{n=1}^{\infty}\frac{|c_n|}{|t_n|^2}<\infty,
\]
where $\{t_n\}$ is the zero set of $g$. Moreover, $g$ is an entire
function of order $(p+1)\left(1+\frac{t}{2}\right)$.

\end{theorem}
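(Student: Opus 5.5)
The plan is to check directly that $g=h\circ H$ solves an equation of the form \eqref{eq:osnovnoe}, and then to reduce the decomposition to Theorem~\ref{thm:razloz_v_sum_1}. Since $H'=aP$ and $H''=aP'$, the chain rule gives $g'=aP\,h'(H)$ and
\[
g''=aP'\,h'(H)+a^2P^2h''(H)=\frac{P'}{P}\,g'-a^2P^2\,T(H)\,g .
\]
Thus $Pg''-P'g'+Qg=0$ with $Q:=a^2P^3\,T(H)$, which is a polynomial of degree $3p+t(p+1)$. Here $N=q-p=(p+1)(t+2)-2$, and Corollary~\ref{cor:ode-growth-parameters} gives $\rho(g)=(N+2)/2=(p+1)(1+t/2)$. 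The same order also follows directly: $\rho(h)=(t+2)/2$ by that corollary with $P\equiv1$, and composing with a polynomial of degree $p+1$ multiplies the order by $p+1$. The function $g$ is entire because $h$ and $H$ are.

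Next I would verify condition (ii) of Theorem~\ref{thm:razloz_v_sum_1}, namely that $g\to\infty$ in every Stokes sector of \eqref{eq:osnovnoe}. The Stokes directions of the new equation have angles $\theta_k=\frac{2\pi k-\arg(a^2\cdot\mathrm{lead})}{N+2}$. For large $|z|$ we have $H(z)\sim \frac{a}{p+1}z^{p+1}$. I expect $H$ to map a slightly shrunk Stokes sector of $g$ (opening $2\pi/(N+2)$) into a shrunk Stokes sector of $h$ (opening $2\pi/(t+2)$), because $(p+1)\cdot 2\pi/(N+2)=2\pi/(t+2)$. One can check that the Stokes rays correspond: on them the phase $\frac{2\sqrt{b}}{N+2}z^{(N+2)/2}$ is real, and it equals the Liouville phase of $h$ evaluated at $H(z)$, to leading order. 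Since $|H(z)|\to\infty$ and $h\to\infty$ in each closed subsector of the sectors of $h$, we get $g\to\infty$ in each subsector of every Stokes sector of $g$. This step is the one I expect to need the most care. The subleading terms of $H$ perturb arguments only by $O(|z|^{-1})$, which is harmless for sectors shrunk by a fixed $\delta$. The affine shift $d$ in the definition of the Stokes rays is likewise immaterial asymptotically.

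Finally, since by hypothesis $g$ and $P$ have no common zeros, Theorem~\ref{thm:razloz_v_sum_1} yields the representation
\[
P/g^2=\sum_n c_n(z-t_n)^{-2},
\]
with $\{t_n\}$ the zero set of $g$. Corollary~\ref{cor:coefficient-summability}, taken with $\alpha=1$, gives the summability $\sum|c_n|/|t_n|^2<\infty$.

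Alternatively, condition (iii) of Theorem~\ref{thm:razloz_v_sum_1} could be checked instead. Each zero ray of $h$ near a Stokes ray pulls back under $H$ to $p+1$ rays of zeros of $g$. These account for all $N+2=(p+1)(t+2)$ Stokes rays, so $g$ has infinitely many zeros near every Stokes ray.
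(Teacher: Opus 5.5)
Your proposal is correct and follows essentially the same route as the paper. Both arguments use the chain-rule computation giving $Q=a^2P^3(T\circ H)$ and $\rho(g)=(N+2)/2=(p+1)(1+t/2)$. Both then observe that $H(z)\sim\frac{a}{p+1}z^{p+1}$ carries shrunk Stokes sectors of the pulled-back equation into shrunk Stokes sectors of $h''+Th=0$, so that condition (ii) of Theorem~\ref{thm:razloz_v_sum_1} applies.

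You add two details that the paper leaves implicit, and both are welcome. First, you check that the Liouville phases match, which fixes how the sectors are rotated relative to each other and not just their openings. Second, you appeal to Corollary~\ref{cor:coefficient-summability} with $\alpha=1$ to get the summability.
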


\begin{proof}
We have
\[
g'=(h'\circ H)H',
\qquad
g''=(h''\circ H)H'^2+(h'\circ H)H''.
\]
Using $h''=-Th$, we obtain
\[
g''=(h'\circ H)H''-(T\circ H)H'^2g.
\]
Therefore
\[
\frac{P'}{P}g'-g''
=
\left(\frac{P'}{P}H'-H''\right)(h'\circ H)
+
(T\circ H)H'^2g.
\]
But $H'=aP$ and $H''=aP'$, so the first term vanishes. Hence
\[
\frac{P'}{P}g'-g''
=
a^2P^2(T\circ H)g,
\]
and so $g$ satisfy equation \eqref{eq:osnov_eq_for_examples}  with $Q=a^2P^3(T\circ H)$ and $\deg Q=3p+t(p+1)$. It follows that
\[
\rho(g)=\frac{\deg Q-\deg P+2}{2}
=
(p+1)\left(1+\frac{t}{2}\right).
\]
Since
\[
H(z)=\frac{a}{p+1}z^{p+1}(1+o(1)),
\qquad z\to\infty,
\]
the preimages under $H$ of the Stokes sectors of $h''+Th=0$ are asymptotically
the Stokes sectors of the pulled-back equation. By assumption, $h$ tends to
$\infty$ in every such sector, and hence so does $g=h\circ H$. Since
$g$ and $P$ have no common zeros, Theorem~\ref{thm:razloz_v_sum_1} gives
the asserted decomposition.
\end{proof}

\begin{remark}
Theorem~\ref{thm:pullback-construction} provides many explicit examples. In
particular, it applies to
\[
\frac{z(z-1)}
{\sin^2\!\left(\frac{z^3}{3}-\frac{z^2}{2}+1\right)}
\]
and
\[
\frac{(z^2+z+1)(z-2)}
{\Bi^2\!\left(\frac{z^4}{4}-\frac{z^3}{3}-\frac{z^2}{2}-2z\right)}.
\]
\end{remark}

\medskip 

\section{Further questions}

We conclude with two questions which lie outside the finite-order theory
developed above.

\begin{question}
Do there exist functions of infinite order of the form
\[
f(z)=\sum_{n=1}^{\infty}\frac{c_n}{(z-t_n)^2},
\qquad
\sum_{n=1}^{\infty}\frac{|c_n|}{|t_n|^2}<\infty,
\]
with only finitely many zeros?
\end{question}

The reduction used in this paper gives, without the finite-order assumption,
an equation
\[
Pg''-P'g'+Qg=0
\]
with $Q$ entire. In the finite-order case $Q$ is a polynomial, and the
problem becomes accessible through the asymptotic theory of second-order
equations with rational coefficients. If $Q$ is transcendental, very little
seems to be known in the required generality about the zero distribution of
entire solutions. This is the main obstacle to extending the present
results to infinite order.

\begin{question}
Can one characterize the order-one examples for which the logarithmic
localization is sharp?
\end{question}

In the borderline case $\rho=1$, the general theorem gives only logarithmic
neighborhoods of the Stokes rays. In special cases, such as the elementary
trigonometric examples, the poles may lie on the rays themselves. It would be
interesting to distinguish these cases from those in which the distance from
the poles to the corresponding Stokes rays tends to infinity, while remaining
of logarithmic size.

\end{document}